\documentclass[reqno,11pt]{amsart}

\usepackage{amsmath,amssymb,mathtools}
\usepackage[truedimen,margin=25truemm]{geometry}
\usepackage{hyperref}
\usepackage{amsmath,amsfonts,amssymb,graphicx,amsthm,color,yfonts,cite,latexsym}
\usepackage{paralist}
\usepackage{mathrsfs}
\usepackage{hyperref}
\usepackage{physics}
\usepackage{bbm}
\usepackage{extarrows}
\usepackage{bm}
\usepackage{mathtools}
\mathtoolsset{showonlyrefs}
\numberwithin{equation}{section}

\allowdisplaybreaks

\newtheorem{theorem}{Theorem}[section]
\newtheorem{definition}[theorem]{Definition}

\newtheorem{lemma}[theorem]{Lemma}

\newtheorem{remark}[theorem]{Remark}

\makeatletter
\newcommand*{\rom}[1]{\expandafter\@slowromancap\romannumeral #1@}
\makeatother

\newcommand{\Id}{\textup{Id}}

\newcommand{\sgn}{\operatorname{sgn}}

\newcommand{\ls}{\lesssim}

\renewcommand{\lg}{\langle}
\newcommand{\rg}{\rangle}
\newcommand{\R}{\mathbb{R}}

\newcommand{\C}{\mathbb{C}}
\newcommand{\Z}{\mathbb{Z}}

\newcommand{\ep}{\varepsilon}

\newcommand{\be}{\beta}

\newcommand{\CB}{\mathcal{B}}
\newcommand{\CC}{\mathcal{C}}

\newcommand{\CF}{\mathcal{F}}
\newcommand{\CG}{\mathcal{G}}

\newcommand{\CL}{\mathcal{L}}

\newcommand{\CS}{\mathcal{S}}
\newcommand{\CT}{\mathcal{T}}
\newcommand{\CU}{\mathcal{U}}

\newcommand{\CX}{\mathcal{X}}

\newcommand{\SA}{\mathscr{A}}
\newcommand{\SB}{\mathscr{B}}
\newcommand{\SC}{\mathscr{C}}
\newcommand{\SD}{\mathscr{D}}
\newcommand{\SE}{\mathscr{E}}

\newcommand{\FS}{\mathfrak{S}}

\newcommand{\ga}{\gamma}
\newcommand{\de}{\delta}
\newcommand{\ps}{\psi}

\newcommand{\ta}{\tau}

\newcommand{\lm}{\lambda}
\newcommand{\si}{\sigma}
\newcommand{\om}{\omega}

\newcommand{\et}{\eta}
\newcommand{\ze}{\zeta}

\newcommand{\Ph}{\Phi}

\newcommand{\pl}{\partial}
\newcommand{\wt}{\widetilde}
\newcommand{\wh}{\widehat}
\newcommand{\loc}{{\rm loc}}

\newcommand{\Ck}[1]{\left\{#1\right\}}

\newcommand{\Dk}[1]{\left[#1\right]}
\newcommand{\K}[1]{\left(#1\right)}

\newcommand{\No}[1]{\left\| #1 \right\|}

\newcommand{\I}{\infty}

\newcommand{\ft}{{\frac{1}{4}}}
\newcommand{\tw}{\frac{1}{2}}

\newcommand{\ov}{\overline}

\newcommand{\II}{\mathbbm{1}}

\newcommand{\supp}{\operatorname{supp}}

\renewcommand{\Im}{\operatorname{Im}}

\newcommand{\dH}{\dot{H}}
\renewcommand{\pv}{\textup{p.v.}}

\makeatletter
\def\l@section{\@tocline{1}{0pt}{0pt}{}{}}%

\def\l@subsection{\@tocline{2}{0pt}{2.5em}{}{}}%

\def\l@subsubsection{\@tocline{3}{0pt}{3.0em}{}{}}%

\makeatother

\title[Intermediate NLS and Calogero--Moser DNLS]{Small-data $L^2$ theory for the intermediate NLS and the Calogero--Moser derivative NLS}
\author[S. Hadama]{Sonae HADAMA}
\address{The University of Osaka, Japan}
\email{hadama.sonae.sci@osaka.u-ac.jp}
\date{}

\thanks{The author was supported by JSPS KAKENHI Grant Number 26KJ0203.}
\subjclass[2020]{Primary 35Q55; Secondary 35Q41, 35P25, 35R05.}
\keywords{Calogero--Moser derivative NLS, Intermediate NLS, Local well-posedness, Global well-posedness, Scattering}

\begin{document}

\begin{abstract}
	In this paper, we study a class of nonlinear Schr\"odinger equations (NLS) in a unified way. This class includes two important examples: the intermediate NLS (INLS) and the Calogero--Moser derivative NLS (CM-DNLS). Our main results are twofold. First, we prove small-data global well-posedness in $L^2(\mathbb{R})$ for a broad class of equations. This includes both focusing and defocusing CM-DNLS and the INLS for arbitrary choices of its parameters. Second, we prove small-data scattering in $L^2(\mathbb{R})$ under an additional assumption. This result covers both focusing and defocusing CM-DNLS and the INLS for specific choices of its parameters. Both the formulation of the problem, including the notion of solution, and the proofs rely crucially on a linear theory for Schr\"odinger equations with rough time-dependent potentials. This theory is also of independent interest, since we allow potentials so rough that the standard Duhamel formulation may not make sense. Our approach is perturbative and is built on the bilinear Strichartz estimate proved by Ozawa and Tsutsumi in 1998. In particular, our argument does not rely on integrability.
 \end{abstract}

\maketitle
\tableofcontents
\section{Introduction}\label{sec:intro}

\subsection{Main results}\label{subsec:main II}
In this paper, we study small-data well-posedness and small-data scattering for a class of nonlinear Schr\"odinger equations (NLS) of the form
\begin{equation}\label{eq:CMDNLS}
	i\partial_tu+\partial_x^2u = (p(D)|u|^2) u + q |u|^2 u, \quad u:\R \times\R \to \C,
\end{equation}
where $p(D)$ is a Fourier multiplier with a symbol $p:\R_\xi \to \C$, and $q \in\C$ is a constant.
Throughout this paper, we assume 
\begin{equation}\label{eq:condition a}
	\|p\|_{L_{-1}^\I}:= \operatorname*{ess\,sup}_{\xi\ne0} \abs{\frac{p(\xi)}{\xi}} <\I.
\end{equation}
The equation \eqref{eq:CMDNLS} is a generalization of the intermediate NLS (INLS)
\begin{equation}\label{eq:the INLS}
	i\pl_t u + \pl_x^2 u = \be \Dk{(-i+\CT_h)\pl_x(|u|^2)}u + \ga|u|^2u,
\end{equation}
where $\be,\ga\in\R$, $h\in (0,\I)$, and $\CT_h$ is the Fourier multiplier with symbol $-i\coth(h\xi)$.
Indeed, we can see that \eqref{eq:the INLS} is a special case of \eqref{eq:CMDNLS} by setting
\begin{equation}\label{eq:correspondence}
	p(\xi):=\beta\left(\xi(1+\coth(h\xi))-\frac{1}{h}\right),\qquad q:=\gamma+\frac{\beta}{h}.
\end{equation}
Moreover, if we choose $p(\xi)=\pm(|\xi|+\xi)$ and $q=0$, we can recover the focusing and defocusing Calogero--Moser derivative NLS (CM-DNLS)
\begin{equation}\label{eq:CM}
	i\pl_t u + \pl_x^2 u = \pm (|D| + D)(|u|^2)u
\end{equation}
from \eqref{eq:CMDNLS}. Note that, at least formally, we obtain \eqref{eq:CM} from \eqref{eq:the INLS} by taking $\be=\pm 1$, $\ga=0$, and $h\to \I$.
Since $h$ represents the water depth in the physical model, the limit $h\to\I$ is called the deep-water limit.

Our first main result is the small-data well-posedness in $L^2(\R)$ for \eqref{eq:CMDNLS} with general $p(\xi)$ and $q$.
In particular, Theorem \ref{th:LWP GWP} includes two important cases:
\begin{itemize}
	\item INLS with arbitrary $\be,\ga\in\R$ and $h\in(0,\I)$;
	\item Focusing and defocusing CM-DNLS.
\end{itemize}

\begin{theorem}[Small-data well-posedness for \eqref{eq:CMDNLS}] \label{th:LWP GWP}
	Let $p:\R_\xi \to \C$ satisfy \eqref{eq:condition a}.
	Let $q\in\C$.
	Then, there exist small constants $\ep_0 = \ep_0(\|p\|_{L^\I_{-1}},|q|)>0$, $R=R (\|p\|_{L^\I_{-1}},|q|)>0$, and $T=T(\|p\|_{L^\I_{-1}},|q|)>0$ such that the following statements are true.
	\begin{enumerate}[$(i)$]
		\item (Existence and uniqueness) If $\|\phi\|_{L^2(\R)} \le \ep_0$, there exists a unique solution $u\in C_t(I;L^2_x(\R)) \cap L^4_t(I;L^\I_x(\R))$ to \eqref{eq:CMDNLS} with the initial condition $u(0)=\phi$ such that $$\|u\|_{C_t(I;L^2_x(\R)) \cap L^4_t(I;L^\I_x(\R))}\le R\quad \text{and} \quad \||u|^2\|_{L^2_t(I;\dH^{1/2}_x(\R))} \le R,$$
		where $I=[-T,T]$.
		See Definition \ref{def:solution} for the precise meaning of solution.
		\item (Lipschitz continuity of the flow) Moreover, the data-to-solution map constructed in $(i)$
		\begin{equation}
			\Ck{\ps : \|\ps\|_{L^2_x(\R)} \le \ep_0}\ni \phi \mapsto (u,|u|^2) \in \Dk{C_t(I;L^2_x(\R)) \cap L^4_t(I;L^\I_x(\R))} \times L^2_t(I;\dH^{1/2}_x(\R))
		\end{equation}
		is Lipschitz continuous.
		\item (Global well-posedness) If $p(\xi)$ is real-valued and $q\in \R$, then the local solution obtained in $(i)$ can be extended to a global solution $u\in C_t(\R;L^2_x(\R))$, and its mass is conserved. Namely, for any $t\in \R$, it holds that
		\begin{equation}
			\|u(t)\|_{L^2_x(\R)} = \|\phi\|_{L^2_x(\R)}.
		\end{equation}
		\item (Improved uniqueness) If $p(\xi)$ is real-valued and $q\in \R$, the following stronger uniqueness statement holds.
		Let $\phi\in L^2_x(\R)$ be such that $\|\phi\|_{L^2_x(\R)} \le \ep_0$.
		Let $I'\subset \R$ be an interval such that $0\in I'$.
		Assume that $u'\in C_t(I';L^2_x(\R)) \cap L^4_{\loc,t} (I';L^\I_x)$ is a solution to \eqref{eq:CMDNLS} with the initial condition $u'(0)=\phi$ such that $|u'|^2\in L^2_{\loc,t}(I';\dH^{1/2}_x(\R))$.
		Then, $u(t)=u'(t)$ for all $t\in I'$, where $u$ is the global solution we constructed in $(iii)$. 
	\end{enumerate}
\end{theorem}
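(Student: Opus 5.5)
The approach is to treat \eqref{eq:CMDNLS} as a linear Schr\"odinger equation with a time-dependent potential $V=p(D)|u|^2+q|u|^2$, and to close a fixed point in the pair $(u,\rho)$ with $\rho=|u|^2$, in the spirit of the abstract. Given $\rho$ with $\|\rho\|_{L^2_t\dH^{1/2}_x}$ small, I solve the linear problem
\begin{equation}
i\pl_t v+\pl_x^2 v=\bigl(p(D)\rho+q\rho\bigr)v,\qquad v(0)=\phi .
\end{equation}
Since $|p(\xi)|\le \|p\|_{L^\I_{-1}}|\xi|$, the potential $p(D)\rho$ lies only in $L^2_t\dH^{-1/2}_x$. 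For such a rough potential the product $Vv$ need not make sense as a distribution, which is why the solution is understood in the sense of Definition~\ref{def:solution}, through the linear theory developed for rough time-dependent potentials. From that theory I take existence, uniqueness and the bound
\begin{equation}
\|v\|_{C_tL^2_x\cap L^4_tL^\I_x}\ls\|\phi\|_{L^2},
\end{equation}
with constants depending on $\|p\|_{L^\I_{-1}}$ and $|q|$, together with Lipschitz dependence on $\rho$.

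The key nonlinear input is the Ozawa--Tsutsumi bilinear estimate
\begin{equation}
\bigl\|\,|D|^{1/2}\bigl(e^{it\pl_x^2}f\,\ov{e^{it\pl_x^2}g}\bigr)\bigr\|_{L^2_{t,x}}\ls\|f\|_{L^2}\|g\|_{L^2},
\end{equation}
extended perturbatively to solutions of the potential equation. This extension is done by iterating the Duhamel expansion in the weak sense, or more likely via the linear-theory lemma in the paper, and it yields
\begin{equation}
\||v|^2\|_{L^2_t\dH^{1/2}_x}\ls\|\phi\|_{L^2}^2 .
\end{equation}
I then define the map $\Phi(\rho)=|v_\rho|^2$ on the ball $\{\|\rho\|_{L^2_t(I;\dH^{1/2}_x)}\le R\}$. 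For $\ep_0$ small this map sends the ball into itself and is a contraction, using the difference estimate $v_{\rho_1}-v_{\rho_2}$ from the linear theory. The fixed point gives $u=v_\rho$ with $|u|^2=\rho$, which proves (i).

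For (ii), the same difference estimates, taken in $(\phi,\rho)$ jointly, give Lipschitz continuity of the flow. For (iii), when $p$ and $q$ are real the potential is real, so the linear flow is unitary and mass is conserved; this has to be justified by approximation by smooth potentials within the linear theory. Since $T$ and $\ep_0$ depend only on the structural constants and $\|u(T)\|_{L^2}=\|\phi\|_{L^2}\le\ep_0$, I restart at $t=\pm T$ repeatedly and obtain a global solution.

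For (iv), on any compact subinterval the solution $u'$ has $\||u'|^2\|_{L^2_t\dH^{1/2}}$ finite, though not necessarily small. I split the interval into short pieces on which this norm is $\le R$; this is possible by absolute continuity of the integral. On each piece $u'$ solves the linear equation with potential given by $\rho'=|u'|^2$, so by linear uniqueness $u'=v_{\rho'}$, and by contraction uniqueness $\rho'$ equals the fixed point. Continuing piece by piece gives $u=u'$.

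The main obstacle is the bilinear estimate for solutions of the rough-potential equation. Its perturbative proof requires controlling $e^{it\pl_x^2}$-Duhamel terms whose source $Vv$ is not defined classically. My first attempt is to use a frequency-localized or $U^2/V^2$-type formulation in which the trilinear form $\int (p(D)\rho)\,v\,\ov w$ is bounded by $\|\rho\|_{L^2\dH^{1/2}}$ times bilinear norms of $v$ and $w$. The duality pairing $\dH^{1/2}$--$\dH^{-1/2}$ is exactly what makes this work.
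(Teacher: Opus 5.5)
\textbf{The $q\rho$ term breaks your scheme.} You place $q\rho$ in the potential, but $q\rho$ is not in the class $L^2_t\dH^{-1/2}_x$ on which the linear theory (Theorems \ref{th:construction of SV}, \ref{th:Stri}, Lemmas \ref{lem:standard Stri}, \ref{lem:key}) is built. For $\rho=|v|^2$ with $v(t)\neq0$, $\wh{\rho}(t,\cdot)$ is continuous and $\wh{\rho}(t,0)=(2\pi)^{-1/2}\|v(t)\|_{L^2}^2>0$, so $\int|\xi|^{-1}|\wh{\rho}(t,\xi)|^2d\xi=\I$. This is why the paper requires $|p(\xi)|\ls|\xi|$. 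For the INLS it moves the constant $\be/h$ (the value of $\be\xi\coth(h\xi)$ at $\xi=0$) out of the symbol and into $q$ in \eqref{eq:correspondence}.

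Your ball $\{\|\rho\|_{L^2_t\dH^{1/2}_x}\le R\}$ also gives no control of $q\rho$ in any other norm with a usable linear theory, such as $L^1_tL^\I_x$. So $\Phi$ is not shown to be well defined, and the Strichartz and Ozawa--Tsutsumi-type bounds you invoke for $v_\rho$ are not available. A symptom is that the local time $T$ never enters your argument. For $q\neq0$ it must, because the cubic term is $L^2$-subcritical and its smallness comes from H\"older in time.

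The paper keeps only $p(D)\varrho$ inside the propagator and treats $q|u|^2u$ as a Duhamel forcing (Definition \ref{def:solution}). The contraction therefore runs jointly in $(u,\varrho)\in\CX(I)\times L^2_t(I;\dH^{1/2}_x)$. The inhomogeneous estimate \eqref{eq:Stri 2}, together with $\||u|^2u\|_{L^1_tL^2_x}\le(2T)^{1/2}\|u\|_{L^4_tL^\I_x}^2\|u\|_{L^\I_tL^2_x}$, supplies the factor $T^{1/2}$. Lemma \ref{lem:key}, applied inside the $\ta$-integrals by Minkowski, handles the cross terms of $|F_1|^2$. Your density-only map is exactly the paper's argument in the case $q=0$ (Theorem \ref{th:CM scattering}).

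\textbf{Your proof of (iii) fails for the main examples.} A real-valued $p$ makes $p(D)\varrho$ real only if $p$ is even. For the CM-DNLS, $p(\xi)=\pm(|\xi|+\xi)$; for the INLS the symbol contains $\be\xi$. In both cases the potential contains the purely imaginary term $\mp i\pl_x\varrho$ (resp.\ $-i\be\pl_x\varrho$), so $S_{p(D)\varrho}$ is not unitary and your argument does not apply. Mass conservation is instead a nonlinear cancellation. Formally,
\[
\frac{d}{dt}\|u(t)\|_{L^2}^2=2\Im\int_\R (p(D)\varrho)\,\varrho\,dx=2\Im\int_\R p(\xi)|\wh{\varrho}(t,\xi)|^2d\xi=0,
\]
because $\varrho=|u|^2$ is the very density that generates the potential.

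The paper makes this rigorous in Lemma \ref{lem:local in time conservation}. It approximates $p(D)\varrho$ by $V_j\in\CL(I)$ and solves $u_j=S_{V_j}(t)\phi-iq\int_0^tS_{V_j}(t,\ta)|u|^2u_j\,d\ta$. It then uses the exact mass identity for $u_j$ and passes to the limit in the $\dH^{-1/2}$--$\dH^{1/2}$ pairing. That limit requires $|u_j|^2\to|u|^2$ in $L^2_t\dH^{1/2}_x$, which comes from the difference estimates. Your part (iv) depends on this too, since each restart at $r_j$ needs $\|u'(r_j)\|_{L^2}\le\ep_0$.

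Finally, the bilinear estimate you single out as the main obstacle is already provided by Lemma \ref{lem:key}, via the time-ordered bound of Theorem \ref{th:multilinear 1}. Within this theorem the real difficulties are the two issues above.
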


\begin{remark}[Definition of homogeneous Sobolev spaces]
	Since several slightly different definitions of homogeneous Sobolev spaces are used in the literature, we explicitly state the definition adopted in this paper.
	For any $\si\in\R$, we define $\dH^\si(\R)$ as follows.
	Let
	\begin{equation}
		\CC := \Ck{f\in \CS(\R): \supp \wh{f} \subset \R\setminus\{0\} }
	\end{equation}
	and $\|f\|_{\dH^\si(\R)} := \||\cdot|^\si \wh{f}\|_{L^2(\R)}$ for all $f\in \CC$.
	Define $\dH^\si(\R)$ as the completion of $\CC$ with respect to the norm $\|\cdot\|_{\dH^\si(\R)}$.
\end{remark}

Our second main result is small-data scattering in $L^2(\R)$ for \eqref{eq:CMDNLS} when $q=0$.
In particular, Theorem \ref{th:CM scattering} includes two important cases:
\begin{itemize}
	\item INLS when $\ga + \be/h  = 0$ (see \eqref{eq:correspondence});
	\item Focusing and defocusing Calogero--Moser DNLS.
\end{itemize}

\begin{theorem}[Small-data scattering for \eqref{eq:CMDNLS}]\label{th:CM scattering}
	Assume that $p:\R_\xi\to \C$ satisfies \eqref{eq:condition a} and $q=0$.	
	Then, there exist sufficiently small constants $\ep_0=\ep_0(\|p\|_{L^\I_{-1}})>0$ and $R=R(\|p\|_{L^\I_{-1}})>0$ such that the following hold.
	\begin{enumerate}[$(i)$]
		\item (Existence and uniqueness) For any $\phi\in L^2_x(\R)$ such that $\|\phi\|_{L^2_x(\R)} \le \ep_0$, the NLS \eqref{eq:CMDNLS} with the initial condition $u(0)=\phi$ has a unique global solution $u\in C_t(\R;L^2_x(\R))$ such that
		$$\||u|^2\|_{L^2_t(\R;\dH^{1/2}_x(\R))} \le R.$$
		See Definition \ref{def:solution} for the precise meaning of solution.
		\item (Scattering) The solution $u(t)$ obtained in $(i)$ scatters as $t\to\pm \I$. More precisely, there exist $\phi_\pm \in L^2_x(\R)$ such that
		\begin{equation}
			\lim_{t\to\pm \I} \|e^{-it\pl_x^2}u(t)- \phi_\pm \|_{L^2_x(\R)} = 0.
		\end{equation}
	\end{enumerate}
\end{theorem}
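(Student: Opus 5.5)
The plan is to recast \eqref{eq:CMDNLS} with $q=0$ as a linear Schr\"odinger equation $i\pl_t u+\pl_x^2 u = V u$ with the time-dependent potential $V:=p(D)|u|^2$, and then close a fixed point in the single quantity $w:=|u|^2$. The guiding observation is that \eqref{eq:condition a} gives
\begin{equation}
	\|p(D)w\|_{L^2_t\dH^{-1/2}_x}\le \|p\|_{L^\I_{-1}}\|w\|_{L^2_t\dH^{1/2}_x}.
\end{equation}
So the natural potential class is $V\in L^2_t(\R;\dH^{-1/2}_x)$, which is scale-invariant and involves no finite time $T$. This is why the argument should be global when $q=0$: the cubic term $q|u|^2u$ would require $L^4_tL^\I_x$ control with a time factor, while here nothing of the kind appears.

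\emph{Step 1 (linear theory).} I take as given the linear theory developed in the paper for $i\pl_t v+\pl_x^2 v=Vv$ with $\|V\|_{L^2_t(\R;\dH^{-1/2}_x)}\le\de$ small. It should provide existence and uniqueness of a solution $v\in C_t(\R;L^2_x)$ in the paper's weak sense (Definition \ref{def:solution}), together with the bound $\||v|^2\|_{L^2_t\dH^{1/2}_x}\ls\|v(0)\|_{L^2}^2$. It should also provide Lipschitz dependence of $|v|^2$ on $V$, and existence of the limits $e^{-it\pl_x^2}v(t)$ as $t\to\pm\I$ in $L^2$. The key input behind all of this is the Ozawa--Tsutsumi bilinear estimate $\||e^{it\pl_x^2}\phi|^2\|_{L^2_t\dH^{1/2}_x}\ls\|\phi\|_{L^2}^2$. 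The perturbative step combines it with a duality estimate for the Duhamel term of the form $\|\int e^{-is\pl_x^2}(V v)(s)\,ds\|_{L^2}$. This is controlled via the bilinear form $\iint V\,\ov{e^{is\pl_x^2}\psi}\,v\,dx\,ds$, which pairs $V\in L^2\dH^{-1/2}$ against a product lying in $L^2\dH^{1/2}$. Here one needs a bilinear version of Ozawa--Tsutsumi for $\ov{e^{it\pl_x^2}\psi}\,e^{it\pl_x^2}\phi$, together with a $U^2/V^2$-type or iterated-Duhamel argument to pass from free solutions to $v$.

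\emph{Step 2 (fixed point).} Define $\Phi(w):=|v_w|^2$, where $v_w$ solves the linear equation with $V=p(D)w$ and $v_w(0)=\phi$. On the ball $\{\|w\|_{L^2_t\dH^{1/2}_x}\le R\}$ with $R\sim\ep_0^2$, the potential satisfies $\|V\|\le\|p\|_{L^\I_{-1}}R\le\de$. Step 1 then gives $\|\Phi(w)\|\le C\ep_0^2\le R$ and $\|\Phi(w)-\Phi(w')\|\le C\ep_0^2\|p\|_{L^\I_{-1}}\|w-w'\|$, so $\Phi$ is a contraction. The fixed point gives $u:=v_w$ with $|u|^2=w$, which is a solution in the sense of Definition \ref{def:solution}.

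\emph{Step 3 (uniqueness and scattering).} Any other solution $u'$ with $\||u'|^2\|\le R$ produces a fixed point $|u'|^2$ of $\Phi$, by uniqueness for the linear problem. Contraction forces $|u'|^2=|u|^2$, and then linear uniqueness gives $u'=u$. Scattering, part (ii), follows from the linear scattering statement in Step 1 applied with $V=p(D)|u|^2\in L^2_t(\R;\dH^{-1/2}_x)$. Concretely, $e^{-it\pl_x^2}u(t)$ is Cauchy in $L^2$ because the Duhamel tail over $[t_1,t_2]$ is bounded by $\|V\|_{L^2([t_1,\I);\dH^{-1/2})}\|u\|$, which tends to $0$ as $t_1\to\I$.

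The main obstacle is Step 1: justifying the Duhamel/duality bound when $V$ is only in $L^2\dH^{-1/2}$. In this class $Vv$ need not be a distribution in the classical sense, so the estimate has to be formulated through the bilinear pairing and an approximation of $V$ by smooth potentials, with uniform bounds passing to the limit.
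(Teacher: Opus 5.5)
Your proposal is correct and is essentially the paper's proof. You run a contraction for $\varrho\mapsto|S_{p(D)\varrho}(t,0)\phi|^2$ on a ball of $L^2_t(\R;\dH^{1/2}_x)$, using the Ozawa--Tsutsumi type bound for $S_V$ and its difference version (Lemma \ref{lem:key}); uniqueness comes from uniqueness of this fixed point, which Definition \ref{def:solution} makes immediate; and scattering comes from the inverse wave operators of Theorem \ref{th:construction of SV}$(v)$. The only divergence is your sketch of how the linear input would be proved (duality, $U^2/V^2$), whereas the paper obtains it from the Dyson series and the multilinear bound of Theorem \ref{th:multilinear 1}, because Christ--Kiselev-type arguments break down at this scaling-critical level; since you take that theory as given, this does not affect your argument.
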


\begin{remark}
	In Theorem \ref{th:LWP GWP}, we required the symbol $p(\xi)$ to be real-valued for global well-posedness.
	However, when $q=0$, we obtain small-data global solutions for general complex-valued symbols $p(\xi)$.
\end{remark}

\begin{remark}
	The restriction $q=0$ is not merely technical.
	Already when $p(\xi)\equiv 0$ and $q\in \R\setminus\{0\}$, the cubic NLS on the line is a long-range problem:
	small localized solutions generally exhibit	modified scattering with a logarithmic phase correction rather than	scattering to a free Schr\"odinger solution.
	For this fact, see, for example, \cite{Ozawa 1991,Hayashi Naumkin 1998}.
\end{remark}

\subsection{Comparison with related works}

In this subsection, we compare our results with earlier works on the INLS and the CM-DNLS.
In summary, the discussion below establishes the following points:
\begin{itemize}
\item For the general INLS, our small-data global well-posedness result requires less Sobolev regularity than previous results do;
\item Our small-data scattering result for the CM-DNLS is new and complementary to known results.

\end{itemize}

\subsubsection{The intermediate NLS}
One of the most important examples of the nonlinear equation \eqref{eq:CMDNLS} is the INLS \eqref{eq:the INLS}.
The INLS with $\ga=0$ was introduced by Pelinovsky \cite{Pelinovsky 1995}.
Later, Pelinovsky and Grimshaw derived the INLS with $\ga\ne 0$ in \cite{Pelinovsky Grimshaw 1996}.

To the best of the author's knowledge, de Moura proved the first well-posedness result for the INLS in \cite{de Moura 2007 the INLS}.
More specifically, de Moura proved local well-posedness for small initial data in $H^s(\R)$ with $s\geq1$.
Moreover, small-data global well-posedness in the energy space $H^1(\R)$ was also given.
In \cite{de Moura Pilod 2010}, de Moura and Pilod proved large-data local well-posedness in $H^s(\R)$ for $s>1/2$.
A related result in Besov spaces was obtained in \cite{Barros de Moura Santos 2019}.
In a recent preprint \cite{Chapouto et al 2025+}, Chapouto, Forlano, and Laurens lowered the Sobolev threshold for local well-posedness to $s>1/4$ for all $\beta,\gamma\in\R$ and $0<h\leq\infty$.
When $\ga=0$, they also proved small-data global well-posedness in $H^s(\R)$ with $1/4<s\le 1$.
To the best of the author's knowledge, this is the best small-data global well-posedness result among the earlier works.

Theorem \ref{th:LWP GWP} $(iii)$ gives small-data global well-posedness in $L^2(\R)$ for the INLS and Calogero--Moser DNLS.  
Indeed, Theorem \ref{th:LWP GWP} $(iii)$ includes the INLS \eqref{eq:the INLS} for all $\be,\ga\in\R$ and $h\in (0,\I)$ by setting $p(\xi)$ and $q$ according to \eqref{eq:correspondence}.
As long as we restrict attention to small initial data, this global well-posedness result should be expected to be optimal.
Indeed, in \cite{Angulo de Moura 2007}, Angulo and de Moura proved that the CM-DNLS, a limiting equation of the INLS, is ill-posed in $H^s(\R)$ with $s<0$, in a very mild sense.
More precisely, they proved that the data-to-solution map cannot be $C^3$ at the origin.

\subsubsection{Calogero--Moser derivative NLS}
Other important examples of \eqref{eq:CMDNLS} are the focusing and defocusing CM-DNLS \eqref{eq:CM}, namely, the cases $p(\xi)=\pm(|\xi|+\xi)$ and $q=0$.
We will review results on these two equations.

In \cite{Gerard Lenzmann 2024}, G\'erard and Lenzmann proved that the focusing CM-DNLS is globally well-posed in the Hardy--Sobolev space $H_+^s(\R)$ for $s\geq1$ under a size assumption on initial data, where $H_+^s(\R)$ is defined as
\begin{equation}
	H_+^s(\R) := \Ck{f \in H^s(\R): \supp \wh{f} \subset [0,\I)}.
\end{equation}
They also classified the traveling waves and studied multisolitons.
In \cite{Killip et al 2025}, Killip, Laurens, and Vi\c{s}an proved global well-posedness for both focusing and defocusing cases in a scaling-critical space $L_+^2(\R):=H^0_+(\R)$ under a size assumption on initial data in the focusing case.

For the focusing CM-DNLS, many results on the long-time behavior, blowup phenomena, and Sobolev norm growth are available.
In \cite{Kim Kwon 2024+}, Kim and Kwon proved soliton resolution for both finite-time blowup solutions and global-in-time solutions, without a symmetry or size assumption.
In particular, they proved that solutions with initial data in $H^{1,1}(\R)$ scatter under a size assumption on the initial data, where 
$$H^{1,1}(\R):= \Ck{u\in H^1(\R): xu \in L^2(\R)}.$$
We should emphasize that they did not use the integrable structure.
Results on the construction and classification of finite-time blowup solutions and on their dynamics were obtained in \cite{Kim Kim Kwon 2024+, Jeong Kim 2024+, Jeong et al 2026+}.
For the growth of Sobolev norms, see \cite{Hogan Kowalski 2024}.

For the defocusing CM-DNLS, in \cite{Chen 2025+}, Chen proved that all solutions with initial data in $L^2_+(\R)$ scatter.
In contrast to the approach in \cite{Kim Kwon 2024+} for the focusing case, the method in \cite{Chen 2025+} makes full use of the integrable structure.

Finally, we mention two important works in a different direction \cite{Frank Read 2025+, Sun et al 2026+}.
These works seem to be more spectral-theoretic in nature than those mentioned above.

As noted above, the earlier literature \cite{Killip et al 2025, Kim Kwon 2024+, Chen 2025+} established the following results:
\begin{itemize}
	\item (Focusing case) Global well-posedness in $L^2_+(\R)$ with a size assumption\cite{Killip et al 2025};
	\item (Focusing case) Scattering when initial data are in $H^{1,1}(\R)$ with a size assumption \cite{Kim Kwon 2024+};
	\item (Defocusing case) Scattering when initial data are in $L^2_+(\R)$ without a size assumption \cite{Chen 2025+}.
\end{itemize}
However, Theorem \ref{th:CM scattering} applies to all sufficiently small initial data in $L^2(\R)$, in both the focusing and defocusing cases, without assuming one-sided Fourier support or any additional regularity.
In conclusion, Theorem \ref{th:CM scattering} is new and complementary to known results.

\subsection{Linear Schr\"odinger equations with rough time-dependent potentials}\label{subsec:main I}
In this paper, we rely heavily on a theory of the linear Schr\"odinger equation with a time-dependent potential
\begin{equation}\label{eq:S}
	i\partial_tu+\partial_x^2u-V(t,x)u = 0, \quad u:\R \times\R \to \C,
\end{equation}
where $V(t,x)$ is a \textit{rough} and \textit{complex-valued} potential.
More specifically, throughout this paper, we assume
\begin{equation}\label{eq:condition V}
	V\in L^2_t(I;\dH^{-1/2}_x(\R))
\end{equation}
for an interval $I\subset \R$.
We choose this class of potentials in order to apply the linear analysis to the nonlinear equation \eqref{eq:CMDNLS}.
The connection between the NLS \eqref{eq:CMDNLS} and the linear Schr\"odinger equation \eqref{eq:S} under condition \eqref{eq:condition V} is as follows.
In \cite{Ozawa Tsutsumi 1998}, Ozawa and Tsutsumi proved a bilinear Strichartz estimate
\begin{equation}\label{eq:intro OT}
	\||e^{it\pl_x^2}\phi|^2\|_{L^2_t (\R;\dH^{1/2}_x)} \ls \|\phi\|_{L^2_x(\R)}^2.
\end{equation}
Therefore, it is reasonable to look for solutions $u\in C_t(I;L^2_x(\R))$ such that
\begin{equation}
	\varrho :=	|u|^2 \in L^2_t(I;\dH^{1/2}_x(\R)).
\end{equation}
In \eqref{eq:CMDNLS}, one of the nonlinear terms is given by $(p(D)\varrho)u$, and the assumption \eqref{eq:condition a} for $p(\xi)$  implies
\begin{equation}
	V := p(D)\varrho \in L^2_t(I;\dH^{-1/2}_x(\R)).
\end{equation}
Hence, on the interval $I$, the natural space for potentials is $L^2_t(I;\dH^{-1/2}_x(\R))$. For the scattering problem, we take $I=\R$.

\begin{remark}[Scaling]\label{rmk:scaling}
	Consider the $L^2$ preserving Schrödinger scaling
	$$
	u_\lm(t,x):=\lm^{1/2}u(\lm^2t,\lm x).
	$$
	For the linear Schrödinger equation, the corresponding scaling of
	the potential is
	$$V_\lm(t,x):=\lm^2V(\lm^2t,\lm x).$$
	Since
	$$\|V_\lambda\|_{L_t^2(\R;\dot H_x^{-1/2}(\R))}=\|V\|_{L_t^2(\R;\dot H_x^{-1/2}(\R))},$$
	the potential space considered in this paper is a scaling-critical space.
\end{remark}

When $V$ is a real-valued and sufficiently nice function, we can construct a propagator for this equation \eqref{eq:S}.
Namely, there exists a family of unitary operators $(S_V(t,s))_{t,s\in\R}$ such that
\begin{equation}
	i\pl_t S_V(t,s) = (-\pl_x^2 + V(t,x)) S_V(t,s)
\end{equation}
or, equivalently,
\begin{equation}\label{eq:Duhamel}
	S_V(t,s) = S(t-s) - i\int_s^t S(t-\ta)V(\ta)S_V(\ta,s)d\ta,
\end{equation}
where $S(t):=e^{it\pl_x^2}$.
Moreover, under appropriate short-range assumptions, we can construct inverse wave operators $W_V^{\pm}$ as
\begin{equation}
	W_V^\pm :=\lim_{t\to \pm \I} S(-t)S_V(t,0).
\end{equation}
However, when $V(t,x)\in L^2_t(\R;\dH^{-1/2}_x(\R))$, $V$ need not be an ordinary function.
Hence, the standard Duhamel formula \eqref{eq:Duhamel} does not make sense a priori.
Nevertheless, we can construct a propagator and inverse wave operators for \eqref{eq:S} in an appropriate sense even for such $V$.
\begin{theorem}[Construction of $S_V$ and $W_V^\pm$]\label{th:construction of SV}
	Let $I\subset \R$ be an interval. 
	Assume that $V\in L_t^2(I;\dot H_x^{-1/2}(\R))$.
	Note that $V$ is not necessarily real-valued.
	Then, there exists a propagator for the linear Schr\"odinger equation \eqref{eq:S} in the sense of Definition \ref{def:UV unconditional}, denoted by $(S_V(t,s))_{t,s\in I}$, such that the following hold.
	\begin{enumerate}[$(i)$]
		\item For any $t \in I$, $S_V(t,t)=\Id_{L^2_x(\R)}$.
		\item (Evolution property) For any $t,s,r\in I$, $S_V(t,s)S_V(s,r) = S_V(t,r)$. In particular, $S_V(t,s)^{-1} =S_V(s,t)$.
		\item (Strong continuity) For any $s \in I$, the map
		$$ t \mapsto S_V(t,s)$$
		is strongly continuous in $L^2_x(\R)$. Similarly, for any $t \in I$, the map
		$$ s \mapsto S_V(t,s)$$
		is strongly continuous in $L^2_x(\R)$. 
		\item (Bound of operator norms) There exists a monotone increasing function $\Ph:[0,\I) \to [0,\I)$ such that, for any $t,s\in I$,
		\begin{equation}\label{eq:SV bound}
			\|S_V(t,s)\|_{\CB} \le \Ph\K{\|V\|_{L^2_t(I;\dH^{-1/2}_x)}},
		\end{equation}
		where $\CB$ is the space of bounded linear operators on $L^2_x(\R)$, and $\|\cdot\|_\CB$ is the operator norm.
		\item (Existence of inverse wave operators) When $I=\R$, there exist inverse wave operators $W_V^\pm \in \CB$ such that
		\begin{equation}
			\lim_{t\to\pm \I}\No{S(-t)S_V(t,0) - W_V^\pm }_{\CB} = 0.
		\end{equation}
	\end{enumerate}
\end{theorem}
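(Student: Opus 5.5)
We construct $S_V$ perturbatively, by iterating the Duhamel formula in a form that never multiplies $V$ against a single $L^2$ function but only against \emph{densities} $f\bar g$ of two Schr\"odinger waves. The relevant input is the bilinear Ozawa--Tsutsumi estimate \eqref{eq:intro OT}. By polarization and the free group, it gives for $\phi,\psi\in L^2_x$
\begin{equation}
	\big\|S(t)\phi\,\overline{S(t)\psi}\big\|_{L^2_t(\R;\dH^{1/2}_x)}\ls \|\phi\|_{L^2_x}\|\psi\|_{L^2_x}.
\end{equation}
The natural object is therefore the weak (bilinear-form) propagator. For $\phi,\psi\in L^2_x$ define the first iterate
\begin{equation}
	\lg A_1(t,s)\phi,\psi\rg := -i\int_s^t \lg V(\ta),\, \overline{S(\ta-s)\phi}\,\, S(\ta-t)\psi\rg_{\dH^{-1/2},\dH^{1/2}}\,d\ta .
\end{equation}
By Cauchy--Schwarz in $\ta$ and the bilinear estimate, this is bounded by $C\|V\|_{L^2_t\dH^{-1/2}_x}\|\phi\|\|\psi\|$. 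So $A_1(t,s)\in\CB$, and this operator is the rigorous meaning of $-i\int_s^tS(t-\ta)V(\ta)S(\ta-s)\,d\ta$.

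For higher iterates $A_n$ we cannot simply compose, because the intermediate function $S_V(\ta,s)\phi$ is only in $L^2$. The plan is to run the fixed point for the pair consisting of $u(t)=S_V(t,s)\phi$ and the dual-type object. Concretely, we work in a space $X$ of operator-valued maps $t\mapsto U(t)\in\CB$ with the following property: for all $\phi,\psi$ and all free waves, the mixed densities $U(\ta)\phi\,\overline{S(\ta-t)\psi}$ lie in $L^2_\ta\dH^{1/2}_x$, with norm at most $C\|\phi\|\|\psi\|$ uniformly in $t$. This is the content of Definition \ref{def:UV unconditional}.

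The main step, and the expected obstacle, is to show that the Duhamel map $\Gamma(U)(t)=S(t-s)+(\text{weak }\int_s^tS(t-\ta)VU(\ta))$ preserves this bilinear-density property with a factor $C\|V\|$. That is, we need a bilinear estimate of the form
\begin{equation}
	\Big\|\Big(\int_s^{\ta}S(\ta-\si)F(\si)d\si\Big)\overline{S(\ta)\psi}\Big\|_{L^2_\ta\dH^{1/2}}\ls \|F\|_{\text{dual}}\|\psi\|_{L^2},
\end{equation}
where $F=VU$ is in the dual of the density space. I would try to obtain this by duality together with a Christ--Kiselev-type argument. The untruncated version factors through $L^2_x$ via the bilinear estimate twice (a $TT^*$ structure), and the truncation $\si<\ta$ is handled by Christ--Kiselev or by a direct $U^2/V^2$ atomic argument. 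Since the exponents are $L^2_t$ on both sides, Christ--Kiselev fails at the endpoint. I would therefore switch to $U^2_\Delta$-type spaces, where the bilinear estimate transfers to $U^2$ atoms and the truncation is harmless. This is the genuinely delicate point.

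With that estimate in hand, the rest follows. For small $\|V\|_{L^2_t(J;\dH^{-1/2})}$ on an interval $J$, the map $\Gamma$ is a contraction, so $S_V$ exists and is unique on $J$. For general $V$, split $I$ into $N\sim(\|V\|/\delta)^2$ intervals on each of which the norm is below $\delta$, and define $S_V(t,s)$ by composition. Item (i) holds by construction. Item (ii), the evolution property, follows from uniqueness of the fixed point after restarting at time $s$. Item (iv) follows with $\Ph(a)=C^{N(a)}$, and $\Ph$ is monotone. Item (iii), strong continuity in $t$ and $s$, follows from continuity of the weak integral: the integrand is in $L^1_\ta$, so the contribution of short time intervals vanishes, and the free group is strongly continuous. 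Backward-in-$s$ continuity follows from the evolution property and the uniform bounds. Finally, for item (v) with $I=\R$, write
\begin{equation}
	S(-t)S_V(t,0)=\Id+(\text{weak }\textstyle\int_0^tS(-\ta)V(\ta)S_V(\ta,0)d\ta).
\end{equation}
The difference between times $t_1<t_2$ has operator norm at most $C\|V\|_{L^2_t((t_1,t_2);\dH^{-1/2})}\Ph(\|V\|)$, which tends to $0$. The limit therefore exists in $\CB$ by the Cauchy criterion.
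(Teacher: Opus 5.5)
There is a genuine gap, and it is exactly the step you flag as delicate. Everything else in your outline is routine once that step is available: items (i)--(v), the partition for large $V$, and the Cauchy criterion for $W_V^\pm$. So the proposal does not yet contain the substance of the theorem.

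Take the simplest case, the free family $U(\ta)=S(\ta-s)$. Testing the density of your Duhamel term against $h\in L^2_t\dH^{-1/2}_x$, the bound you need is
\[
\Big|\iint_{s<\si<\ta<t}\lg \psi,\,S[h](\ta)S[V](\si)\,\phi\rg\,d\si\,d\ta\Big|\ls\|h\|_{L^2_t\dH^{-1/2}_x}\|V\|_{L^2_t\dH^{-1/2}_x}\|\phi\|_{L^2_x}\|\psi\|_{L^2_x}.
\]
This is the second-order bound $\|W^{(2)}_{h,V}(t,s)\|_{\CB}\ls\|h\|\,\|V\|$ of Lemma \ref{lem:second order term}. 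Both $\ta\mapsto\int_s^\ta S[h]$ and $\ta\mapsto\int_s^\ta S[V]$ have only bounded $2$-variation in $\CB$. You are therefore pairing two $V^2$ objects, which is the borderline case where Young/Christ--Kiselev arguments fail.

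Passing to $U^2_\Delta$ does not make the truncation harmless. By $U^2$--$V^2$ duality, the $U^2_\Delta$ norm of $\int_s^t S(t-\si)V(\si)u(\si)\,d\si$ is computed by testing against $v\in V^2_\Delta$. You would thus need $\|u\ov v\|_{L^2_t\dH^{1/2}_x}\ls\|u\|_{U^2_\Delta}\|v\|_{V^2_\Delta}$. Closing the fixed point in $V^2_\Delta$ instead just leaves the solution factor in $V^2_\Delta$.

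The transfer principle gives only $U^2\times U^2$. The usual interpolation to $V^2$ needs a better constant in some $U^q$ with $q>2$, and none exists here: the Ozawa--Tsutsumi estimate is scaling critical, in fact an identity for free waves.

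Your contraction also needs more than this. It requires a retarded bound for an \emph{arbitrary} $U$ with the density property, not just for Dyson iterates. Note too that Definition \ref{def:UV unconditional} is not a density space. It glues Dyson-series propagators along a partition.

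The missing idea is a way to control time-ordered products without any Christ--Kiselev-type lemma. The paper gets it without a fixed point. It proves $\|W^{(n)}_{V_1,\dots,V_n}(t,s)\|_{\CB}\le \ep A^n n^{-2}\prod_j\|V_j\|_{L^2_t\dH^{-1/2}_x}$ term by term and sums the series \eqref{eq:Dyson}. The argument has two steps:

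\begin{enumerate}
\item \textbf{Case $n=2$.} Write $\II_{\{t_1>t_2\}}=\frac12(1+\sgn(t_1-t_2))$. The constant part factorizes into two first-order operators. The $\sgn$ part is written as a principal-value integral in $\lm$ and Fourier transformed. It then reduces (Lemma \ref{lem:multi linear}) to a singular integral along the dilation group $G(u,e^{s}v,e^{-s}w)$. In logarithmic coordinates this becomes a translation, with multiplier $\int e^{i\zeta s}(e^s-1)^{-1}\,ds$ uniformly bounded.
\item \textbf{Case $n\ge3$.} Induct using the dyadic splitting of Lemma \ref{lem:equal subdivision}, along partitions that equalize the mass of $V_1$. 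The diagonal pieces vanish as $m\to\I$. The cross terms are summable because of the factor $2^{-(\ell+1)/2}$, and $\sum_k k^{-2}(n-k)^{-2}\ls n^{-2}$ preserves the $1/n^2$ weight.
\end{enumerate}
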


\begin{remark}
	For the precise definition of $S_V$ in Theorem \ref{th:construction of SV}, see Definitions \ref{def:UV} and \ref{def:UV unconditional}.
	See also Section \ref{subsec:idea}.
\end{remark}

\begin{remark}
	For complex-valued $V$, the propagator is generally not unitary, and the $L^2$ norm need not be conserved.
	The relevant adjoint identity is
	$$
	S_V(t,s)^*=S_{\overline V}(s,t).
	$$
	Since Theorem \ref{th:construction of SV} asserts bounded invertibility of the propagator for general complex-valued $V$, the unitarity of $S_V(t,s)$ is recovered when $V$ is real-valued.
\end{remark}

When $\|V\|_{L^2_t(\R;\dH^{-1/2}_x(\R))}$ is small, the Strichartz estimates hold, which are essential tools for applications to nonlinear problems.
Our second linear result is as follows.
\begin{theorem}[Strichartz estimates for $S_V$]\label{th:Stri}
	There exists a small absolute constant $\ep>0$ such that the following holds.
	For any interval $I\subset \R$ including $0$ and for any $V$ such that $\|V\|_{L^2_t(I;\dH^{-1/2}_x(\R))} \le \ep$, it holds that
	\begin{align}
		&\|S_V(t,0)\phi\|_{C_t(I;L^2_x(\R)) \cap L^4_t(I;L^\I_x(\R))} \ls \|\phi\|_{L^2_x(\R)}, \label{eq:homogeneous Stri}\\
		&\No{\int_0^t S_V(t,\ta)f(\ta) d\ta}_{C_t(I;L^2_x(\R)) \cap L^4_t(I;L^\I_x(\R))} \ls \|f\|_{L^1_t(I;L^2_x(\R)) + L^{4/3}_t(I;L^1_x(\R))},\label{eq:inhomogeneous Stri}\\
		&\No{|S_V(t,0)\phi|^2}_{L^2_t(I;\dH^{1/2}_x)} \ls \|\phi\|_{L^2_x(\R)}^2. \label{eq:bilinear Stri}
	\end{align}
\end{theorem}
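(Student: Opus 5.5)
The plan is to construct $S_V(t,0)\phi$ (and its inhomogeneous analogue) by a contraction argument in which the potential term is never written as $V u$. Instead I pair the density $|u|^2$-type bilinear quantities with $V$ through the duality $\dH^{-1/2}\times\dH^{1/2}$. Concretely, I work in the space
\begin{equation}
X:=C_t(I;L^2_x)\cap L^4_t(I;L^\I_x),
\end{equation}
augmented by the bilinear seminorm
\begin{equation}
\|u\|_{Y}:=\sup_{\|w\|_{X_0}\le1}\|\bar w\, u\|_{L^2_t(I;\dH^{1/2}_x)},
\end{equation}
where $w$ ranges over free solutions $S(t)\psi$, $\|\psi\|_{L^2}\le1$. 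By polarization of the Ozawa--Tsutsumi estimate \eqref{eq:intro OT}, the free flow satisfies $\|S(t)\phi\|_X+\|S(t)\phi\|_Y\ls\|\phi\|_{L^2}$. The key step is then the dual estimate for the Duhamel term $\int_0^t S(t-\ta)V(\ta)u(\ta)\,d\ta$, interpreted weakly. To bound its $L^2$ norm at a fixed time $t$, I pair it with $S(t)\psi$. This produces
\begin{equation}
\int_0^t\langle V(\ta),\overline{S(\ta)\psi}\,u(\ta)\rangle\,d\ta,
\end{equation}
which is bounded by $\|V\|_{L^2\dH^{-1/2}}\|u\|_Y\|\psi\|_{L^2}$. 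This pairing is also how the weak (Duhamel-free) formulation of Definition \ref{def:UV} is meant. The $L^4L^\I$ and $Y$ norms of the Duhamel term should then follow by a $TT^*$/Christ--Kiselev style argument. The truncation to $[0,t]$ is harmless because $L^2_t$ has exponent $2$ and we need $L^4_t$ and $L^2_t$ outputs; where Christ--Kiselev fails at the endpoint $2\to2$ for the $Y$ norm, I would instead use the bilinear estimate together with a direct $U^2/V^2$-type atomic space.

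The most delicate point is the bound of the Duhamel term in the $Y$ norm, i.e. showing
\begin{equation}
\Big\|\overline{S(t)\psi}\int_0^tS(t-\ta)F(\ta)\,d\ta\Big\|_{L^2\dH^{1/2}}\ls\|\psi\|_{L^2}\,\|F\|_{N},
\end{equation}
where $N$ is the dual-type space in which $Vu$ lives. My first attempt is to replace $X\cap Y$ by the adapted space $U^2_\Delta$ (Koch--Tataru), where the bilinear Ozawa--Tsutsumi estimate transfers from free solutions to $U^2$ atoms automatically. Its dual $DU^2$ is characterized exactly by pairing against $V^2$ functions, which is what the $\dH^{-1/2}$ pairing needs. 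I would have to accept the $U^2$ vs. $V^2$ discrepancy, presumably via the interpolation lemma on a logarithmic loss that is summable for small $V$. This is the step I expect to be the main obstacle.

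Once the linear map $u\mapsto \int_0^tS(t-\ta)Vu\,d\ta$ has norm $\ls\|V\|_{L^2\dH^{-1/2}}\le\ep$ on the solution space, a Neumann series gives $S_V(t,0)\phi=\sum_k (\text{Duhamel})^kS(t)\phi$, and all three estimates \eqref{eq:homogeneous Stri}, \eqref{eq:bilinear Stri} follow. For \eqref{eq:inhomogeneous Stri}, I write $\int_0^tS_V(t,\ta)f\,d\ta$ as the fixed point of
\begin{equation}
v=\int_0^tS(t-\ta)f\,d\ta-i\int_0^tS(t-\ta)Vv\,d\ta.
\end{equation}
The free inhomogeneous Strichartz estimate, including $U^2$ control via $L^1L^2+L^{4/3}L^1$, then gives the bound. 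That this fixed point equals $\int_0^tS_V(t,\ta)f(\ta)d\ta$ follows from the uniqueness part of Theorem \ref{th:construction of SV} and the evolution property.
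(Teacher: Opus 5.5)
Your proposal has a genuine gap, and it sits exactly at the step you flag as the main obstacle. Bounding the time-truncated Duhamel term in your $Y$ norm is not a technical detail; it is the entire content of the theorem. To see this, take $I=[0,T]$ and the first iterate $u=S(t)\phi$, and dualize the $L^2_t\dH^{1/2}_x$ norm against $h\in L^2_t(I;\dH^{-1/2}_x)$. For nice $h,V$ one computes
\[
\int_I\int_\R h\,\ov{S(t)\psi}\int_0^tS(t-\ta)V(\ta)S(\ta)\phi\,d\ta\,dx\,dt=-\lg\psi|W^{(2)}_{h,V}(T,0)\phi\rg .
\]
So your $Y$-bound for the first iterate is \emph{equivalent} to the operator-norm bound for the time-ordered second-order Dyson term, i.e.\ Lemma~\ref{lem:second order term}. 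The paper proves this with three ingredients: the decomposition $\II_{\{t_1>t_2\}}=\tw(1+\sgn(t_1-t_2))$, the principal-value representation of $\sgn$, and the dilation-group multiplier bound in logarithmic coordinates (Lemma~\ref{lem:multi linear}). Nothing in your sketch replaces this. For general $u$ it is worse. The same duality produces $\iint V\,\ov{z}\,u$ with $z(\ta)=S(\ta)\int_{t>\ta}S[\ov h](t)\psi\,dt$, which is not a free wave. Since $\|u\|_Y$ only tests against free waves, it does not control $\ov z u$. Thus $X\cap Y$ is not mapped into itself by the Duhamel operator, and iterating brings in all higher Dyson terms; bounding those is the binary-subdivision induction of Theorem~\ref{th:multilinear 1}.

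The $U^2/V^2$ patch does not close this gap. Transference of an $L^2_t$-based estimate gives Ozawa--Tsutsumi on $U^2_\Delta\times U^2_\Delta$, not on $V^2_\Delta$. Bounding the Duhamel term in $U^2_\Delta$ by duality places the test function in $V^2_\Delta$. Indeed, the $z$ above is naturally a $V^2$ object: by Lemma~\ref{lem:first order term} its profile has square-summable increments. So you would need $\|u\ov v\|_{L^2_t\dH^{1/2}_x}\ls\|u\|_{U^2}\|v\|_{V^2}$. The Hadac--Herr--Koch interpolation lemma upgrades a $U^2$ bound with constant $C_2$ to a $V^2$ bound with constant $C_2(1+\log(C/C_2))$, but only given a $U^p$ bound, $p>2$, with some constant $C$. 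A scale-invariant $U^p$ bound would already give the $V^2$ bound outright, since $V^2\subset U^p$. The lemma therefore helps only with a non-invariant $C$, e.g.\ $C\sim\|V\|_{L^2_tL^1_x}$ from $L^4_tL^\I_x$ Strichartz. Then $\log(C/C_2)$ is unbounded on $L^2_t\dH^{-1/2}_x$ and invariant under $V\mapsto\lambda V$, so smallness of $V$ cannot absorb it. Hence your Neumann series is never justified.

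The way out is the paper's route, which proceeds as follows:
\begin{itemize}
\item Invoke Theorem~\ref{th:multilinear 1} and sum the Dyson series to get the $C_tL^2_x$ bound.
\item For $L^4_tL^\I_x$, freeze the upper endpoint and apply Christ--Kiselev in the $V_1$-variable ($L^2_t\to L^4_t$).
\item Dualize to get \eqref{eq:inhomogeneous Stri}.
\item For \eqref{eq:bilinear Stri}, use the idea of Lemma~\ref{lem:multilinear Strichartz}. Write the pairing of $S(t)W^{(n)}\phi\cdot\ov{S(t)W^{(m)}\phi'}$ with $h$ as $\lg S(-s)\phi'|\int W^{(m)*}S[h]W^{(n)}\,dt\,S(-s)\phi\rg$. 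Integrate by parts via $i\pl_tW^{(k)}_{Q_1,\dots,Q_k}=S[Q_1]W^{(k-1)}_{Q_2,\dots,Q_k}$, so that $h$ becomes one more potential in time-ordered Dyson operators bounded by Theorem~\ref{th:multilinear 1}. The resulting $\lg n\rg^{1/2}\lg m\rg^{1/2}A^{n+m}$ growth is summable against $\|V\|^{n+m}$ for small $V$.
\end{itemize}
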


\begin{remark}
	See Lemmas \ref{lem:standard Stri}, \ref{lem:difference SV}, and \ref{lem:key} for more detailed versions of Theorem \ref{th:Stri} and difference estimates.
\end{remark}

\begin{remark}
	The linear estimates \eqref{eq:homogeneous Stri} and \eqref{eq:inhomogeneous Stri} are generalizations of the standard Strichartz estimates.
	The bilinear estimate \eqref{eq:bilinear Stri} is a generalization of the bilinear Strichartz estimate \eqref{eq:intro OT} proved by Ozawa and Tsutsumi.
\end{remark}

Finally, we compare Theorems \ref{th:construction of SV} and \ref{th:Stri} with earlier works.

\paragraph{\bf Construction of $S_V$ and Strichartz estimates}
When $V\in L^p_t(I;L^q_x(\R^d))$, the construction of $S_V$ based on the Strichartz estimates for the free flow is available; see, for example, \cite{Yajima 1987, D'Ancona et al 2005}.
	When $V$ is smooth but does not necessarily decay, we can apply the Feynman path integral. In this direction, there are many classical results; see, for example, \cite{Fujiwara 1979, Fujiwara 1980}. 
	See also \cite{Nicola et al 2022 textbook, Fujiwar 2017 textbook} for recent developments.
	
	The Strichartz estimates for $S_V$ often follow as a byproduct of the construction of $S_V$.
	For Strichartz estimates for perturbed flows, see, for example, \cite{Yajima 1987, D'Ancona et al 2005, Rodnianski Schlag 2004}.
	
	When $V$ is not an ordinary function, there are also many results.
	Several authors considered distributional potentials with specific structures.
	See, for example, \cite{Hughes 1995, Hmidi et al 2010, Neidhardt Zagrebonov 2009, Carlone et al 2017, Posilicano 2007}.
	In the author's previous work \cite{Hadama 2026+}, a bounded evolution family was constructed for $V\in L^2_t(I;H_x^{-1/2+\de}(\R))$ without any structural assumptions, where $0<\de\ll 1$.
	We should emphasize that this $\de$ loss was crucial in the construction of $S_V$.
	More specifically, in \cite{Hadama 2026+}, we could use the Christ--Kiselev type lemma thanks to $\de>0$.
	
	The assumption on $V$ in Theorem \ref{th:construction of SV} is essentially different from the assumptions in the earlier works mentioned above.
	First of all, we can include general potentials in $L^2_t(\R;\dH^{-1/2}_x)$ without any structural condition.
	Moreover, we can consider $V$ in a scaling-critical space (see Remark \ref{rmk:scaling}), in which the Christ--Kiselev type lemma is not available, in contrast to the situation in \cite{Hadama 2026+}.

\paragraph{\bf Existence of (inverse) wave operators}
 In Theorem \ref{th:construction of SV}, we defined the inverse wave operators $W_V^\pm$ as $\lim_{t\to\pm\I}S(-t)S_V(t,0)$.
In the literature, the limit $\lim_{t\to\I} S_V(0,t)S(t)$ is often called the wave operator.
	However, in many cases, the existence of the wave operator is equivalent to that of the inverse wave operator.
	
	When $V$ is an ordinary short-range function, there is an extensive literature on the existence of wave operators, but we only refer to \cite{Yafaev 1982, Yoneyama Kato 2020, Mochizuki Motai 2007,Frank et al 2014,Davies 1974,Howland1974}.
	For genuinely long-range potentials, the unmodified wave operators may fail to exist.
	Under suitable assumptions, one instead constructs modified wave operators with an asymptotic phase correction. See, for example, \cite{Kitada 1982,Kitada Yajima 1982}.
	
	Some results are available even if $V$ is not an ordinary function.
	For example, in \cite{Yafaev 1984, Hagedorn 1989}, distributional potentials with some specific structures are considered.
	However, to the best of the author's knowledge, the existence result for inverse wave operators obtained in Theorem \ref{th:construction of SV} is the first result that allows a class of distributional potentials without any structural assumption.

\subsection{Notation}
We collect the basic notation used in this paper.
We use the unitary Fourier-transform conventions
$$\wh{f}(\xi)
:=\frac{1}{\sqrt{2\pi}}\int_{\R}e^{-ix\xi}f(x)dx$$
and
$$\wt{h}(\om,\xi)
:=\frac{1}{2\pi} \iint_{\R^2}e^{-i(t\om+x\xi)}h(t,x)dtdx.$$
We also define the inner product on $L^2(\R)$ by
$$\lg g | f\rg:=\int_{\R}\ov{g(x)}f(x)dx.$$
Let $\CB$ be the space of all bounded linear operators on $L^2(\R)$.

Let $I\subset \R$ be an interval. Define
\begin{equation}
	\CL(I) := \Ck{ f(t,x)\in \CS(\R^2) \mid \wh{f}(t,\xi)\in C^\I_c( I \times (\R\setminus\{0\})) }.
\end{equation}
Then, $\CL(I)$ is dense in $L^2_t(I;\dH^{-1/2}_x(\R))$.

Let $S(t):=e^{it\partial_x^2}$.
For $t,s\in I$ such that $s\le t$, we define a multilinear operator as
\begin{equation}\label{eq:def WV}
	W_{V_1,\dots,V_n}^{(n)}(t,s) = (-i)^n \int_s^t dt_1 \int_s^{t_1}dt_2 \cdots \int_s^{t_{n-1}} dt_n S[V_1](t_1)S[V_2](t_2)\cdots S[V_n](t_n)
\end{equation}
for all $V_1,\dots,V_n \in \CL(I)$, where $S[V](t):=S(-t)V(t)S(t)$.
When $s>t$, we define
\begin{equation}
	W_{V_1,\dots,V_n}^{(n)}(t,s) := W^{(n)}_{\ov{V_n},\dots,\ov{V_1}} (s,t)^*.
\end{equation}
Moreover, define $W_V^{(n)}(t,s) = W_{V,\dots,V}^{(n)}(t,s)$ for $n \ge 1$ and  $W_V^{(0)}(t,s) = \Id_{L^2(\R)}$ for $n=0$.
To shorten the notation, we write $W^{(n)}_{V_1,\dots,V_n}(t):=W^{(n)}_{V_1,\dots,V_n}(t,0)$ and $W^{(n)}_V(t):=W_V^{(n)}(t,0)$.

\section{Formulation of the problem and proof strategy}\label{subsec:idea}
Here we explain the formulation of the problem, including the notion of solution, and the main ideas of the proof. We first describe how the linear theory is applied to the nonlinear problems and then explain how that linear theory is constructed.

\subsection{Nonlinear problems}\label{subsubsec:nonlinear}
Naively, one might try to define a solution to \eqref{eq:CMDNLS} as
\begin{equation}\label{eq:naive Duhamel}
	u(t) = S(t)\phi - i \int_0^t S(t-\ta)\K{p(D)(|u|^2)u + q|u|^2u}(\ta)d\ta. 
\end{equation}
However, it is difficult to give a rigorous meaning to this Duhamel formula \eqref{eq:naive Duhamel} without additional regularity assumptions on $\phi$ because of the derivative loss coming from $p(D)(|u|^2)u$.
Therefore, using the propagator $S_V$ constructed in Theorem \ref{th:construction of SV}, we give the following definition.

\begin{definition}[Definition of a solution to \eqref{eq:CMDNLS}]\label{def:solution}
	Let $I\subset \R$ be an interval and $s\in I$.
	We call $u \in C_t(I;L^2_x(\R)) \cap L^4_{\loc,t}(I;L^\I_x(\R))$ a solution to \eqref{eq:CMDNLS} with initial condition $u(s)=\phi$ if there exists $\varrho \in L^2_{\loc,t}(I;\dH^{1/2}_x(\R))$ such that
	\begin{align}\label{eq:def of solution}
		u(t) = S_{p(D) \varrho}(t,s)\phi - iq \int_s^t S_{p(D)\varrho}(t,\ta)|u(\ta)|^2 u(\ta)d\ta
		\quad \text{and} \quad \varrho(t) = |u(t)|^2.
	\end{align}
\end{definition}
\begin{remark}
	Since $\varrho \in L^2_{\loc,t}(I;\dH^{1/2}_x(\R))$, for any bounded closed interval $J\subset I$, we have $p(D)\varrho \in L^2_t(J;\dH^{-1/2}_x(\R))$.
	Therefore, by Theorem \ref{th:construction of SV}, the propagator $(S_{p(D)\varrho}(t,s))_{t,s\in I}$ is well-defined.
	Moreover, by Theorem \ref{th:construction of SV} $(iv)$, the RHS of \eqref{eq:def of solution} makes sense in $C_t(I;L^2_x(\R))$.
\end{remark}

In the proof of Theorem \ref{th:LWP GWP}, we seek a fixed point $(u,\varrho)$ of the map $F[u,\varrho]=(F_1[u,\varrho], F_2[
u,\varrho])$, where
\begin{align}
	&F_1[u,\varrho](t)
	:=S_{p(D)\varrho}(t,0)\phi
	-iq\int_0^tS_{p(D)\varrho}(t,\tau)|u(\tau)|^2u(\tau)d\tau,\\
	&F_2[u,\varrho]:=|F_1[u,\varrho]|^2.
\end{align}
By the Strichartz estimates for $S_V$ (Theorem \ref{th:Stri}), we can close the estimates for $(u,\varrho)$ on a small ball in $$\Dk{C_t(I;L^2_x)\cap L^4_t(I;L^\I_x)} \times L^2_t(I;\dH^{1/2}_x), \quad I=[-T,T].$$
If $p(\xi)$ is real-valued and $q\in \R$, the $L^2$ conservation law is justified by smooth approximation of the potential.
Hence, the local construction can be iterated to obtain the global solution.

When $q=0$, the problem is reduced to an equation for the density:
\begin{equation}\label{eq:density equation}
	\varrho=F_3[\varrho]:=|S_{p(D)\varrho}(t,0)\phi|^2.
\end{equation}
This reduction to the density $\varrho$ has been used by several authors in the context of the Hartree equation for infinitely many particles. See, for example, \cite{Chen et al 2018, Lewin Sabin 2014}.
By the Ozawa--Tsutsumi type bilinear estimate in Theorem \ref{th:Stri}, we can find a fixed point $\varrho$ of $F_3$.
Define $u(t):=S_{p(D)\varrho}(t,0)\phi$. Then, $u$ is a global solution.
Finally, since $p(D)\varrho\in L^2_t(\R;\dot H^{-1/2}_x)$, Theorem \ref{th:construction of SV} $(v)$ shows that $u$ scatters.

\subsection{Linear problems}\label{subsec:linear}
As explained in Section \ref{subsubsec:nonlinear}, the linear theory (Theorems \ref{th:construction of SV} and \ref{th:Stri}) is crucial in the nonlinear analysis.
Now, we explain how to define and construct $S_V$.

Let $S[V](t)=S(-t)V(t)S(t)$.
A formal iteration of the Duhamel formula gives the Dyson series expansion
\begin{equation}\label{eq:Dyson}
S_V(t,s)=S(t)\sum_{n\ge0}W_V^{(n)}(t,s)S(-s).
\end{equation}
In the sequel, we assume $t>s$ for simplicity.
When $V$ is sufficiently nice, for example $\|V\|_{L^1_t(I;L^\I_x)} \ll 1$, the expansion \eqref{eq:Dyson} is a consequence of the Duhamel formula.
By contrast, in the present paper, we define $S_V(t,s)$ for small $V \in L^2_t (I;\dH^{-1/2}_x)$ by \eqref{eq:Dyson}.
The main technical result is
\begin{equation}\label{eq:idea-dyson-bound}
	\No{W^{(n)}_{V_1,\ldots,V_n}(t,s)}_{\CB}
	\leq \frac{\ep A^n}{n^2}
	\prod_{j=1}^n\|V_j\|_{L^2_t(I;\dH^{-1/2}_x)},
\end{equation}
where $A\ge 1$ and $0<\ep \ll 1$ are absolute constants, and $I=[\min(t,s),\max(t,s)]$.
See Theorem \ref{th:multilinear 1} for the precise statement.

Note that, once the multilinear estimate \eqref{eq:idea-dyson-bound} is established, we can define $S_V(t,s)$ for $V$ such that $\|V\|_{L^2_t(\R;\dH^{-1/2}_x)} <1/A$ via \eqref{eq:Dyson}.
When $V$ is not small, we define
$$S_V(t,s):=S_V(t,r_{N-1}) S_V(r_{N-1},r_{N-2}) \cdots S_V(r_1,s),$$
where $V$ is small on each subinterval of $[s,t]$.
See Definition \ref{def:UV unconditional} for more details.

Once the definition of $S_V$ is established with \eqref{eq:idea-dyson-bound}, we can prove elementary properties of $S_V$ and the existence of $W_V^\pm$ (Theorem \ref{th:construction of SV}) in a standard way.
The linear and bilinear Strichartz estimates (Theorem \ref{th:Stri}) are also consequences of the multilinear estimate \eqref{eq:idea-dyson-bound}.

The main difficulty in proving \eqref{eq:idea-dyson-bound} is the time ordering $t>t_1>\cdots>t_n>s$ in \eqref{eq:def WV}.
Indeed, if we remove this ordering, we find
\begin{equation}
	\No{\int_I \cdots \int_I S[V_1](t_1)\cdots S[V_n](t_n) dt_1\cdots dt_n}_\CB \le \prod_{j=1}^n \No{\int_I S[V_j](t_j)dt_j}_\CB.
\end{equation}
Hence, we can reduce the problem to the estimate for the first order term, and it is already known (see, for example, \cite[Lemma 2.1]{Hadama 2026+}):
\begin{equation}
	\No{\int_I S[V](t) dt}_{\CB} \le C_1 \|V\|_{L^2_t(I;\dH^{-1/2}_x)}
\end{equation}
for an absolute constant $C_1>0$.
Therefore, we conclude that
\begin{equation}\label{eq:non-retarded}
	\begin{aligned}
	\No{\int_I \cdots \int_I S[V_1](t_1)\cdots S[V_n](t_n) dt_1\cdots dt_n}_\CB
	&\le C_1^n \prod_{j=1}^n \|V_j\|_{L^2_t(I;\dH^{-1/2}_x)} \\
	&\le \frac{\ep A^n }{n^2} \prod_{j=1}^n\|V_j\|_{L^2_t(I;\dH^{-1/2}_x)}
	\end{aligned}
\end{equation}
by choosing appropriate $A$ and $\ep$.
One may try to use the Christ--Kiselev type lemma (see \cite{Christ Kiselev 2001}; see also \cite[Lemma A.2]{Hadama 2026+}) to obtain \eqref{eq:idea-dyson-bound} directly from \eqref{eq:non-retarded}, but it is impossible.  

In this paper, we first prove \eqref{eq:idea-dyson-bound} for $n=1,2$ (Section \ref{subsec:1st and 2nd}) and extend it to general $n \ge 3$ by induction (Section \ref{subsec:higher order}).
Since the case $n=1$ is already known, the most important part is the proof of the case $n=2$. 
For the second order term, we use
$$
\II_{\{t_1>t_2\}}
=\tw\bigl(1+\operatorname{sgn}(t_1-t_2)\bigr)
\quad \text{and} \quad
\sgn(t_1-t_2)= \frac{1}{i\pi}\pv \int_{-\I}^\I
\frac{e^{i(t_1-t_2)\lm}}{\lm} d\lm.
$$
The constant part factorizes into two first-order operators.
By applying a principal-value representation of the sign function, taking Fourier transforms, and changing variables, we reduce the remaining part to to a singular-integral estimate associated with the dilation group
$$
(U_sG)(u,v,w)=G(u,e^sv,e^{-s}w).
$$
Logarithmic coordinates turn this dilation into translation, and the resulting one-dimensional multiplier is uniformly bounded.
This proves \eqref{eq:idea-dyson-bound} for $n=2$.

\section{Key multilinear estimate}\label{sec:multilinear estimate}
In this section, we prove the following multilinear estimate, which is the essential tool in this article.
\begin{theorem}[Key estimate]\label{th:multilinear 1}
	There are absolute constants $0< \ep\ll1$ and $A\ge 1$ such that, for any $n\in\Z_{\ge 1}$, for any $t,s\in \R$, and for any $V_1,\dots,V_n\in\CL(I)$, the multilinear estimate 
	\begin{equation}\label{eq:strong induction 1}
		\No{W^{(n)}_{V_1,\dots,V_n}(t,s)}_{\CB} \le \frac{\ep A^n}{n^2} \prod_{j=1}^n \|V_j\|_{L^2_t(I;\dH^{-1/2}_x(\R))}
	\end{equation}
	holds, where $I=[\min(t,s),\max(t,s)]$.
\end{theorem}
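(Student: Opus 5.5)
By the definition of $W^{(n)}$ for $s>t$ via adjoints, and since $\|\ov{V_j}\|=\|V_j\|$ in $L^2_t(I;\dH^{-1/2}_x)$, it suffices to treat $s\le t$. The plan has three stages: prove the case $n=1$, prove the case $n=2$ (the real difficulty), and then run a strong induction on $n$ that only uses the cases $n=1,2$ together with a splitting of the time interval.

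\textbf{The cases $n=1$ and $n=2$.} For $n=1$ the bound is the known first-order estimate $\|\int_J S[V](t)\,dt\|_\CB\le C_1\|V\|_{L^2_t(J;\dH^{-1/2}_x)}$, valid for any subinterval $J$. It is proved by computing the kernel of $\int S[V]dt$ in Fourier variables, which is $\wt V(\xi^2-\eta^2,\xi-\eta)$ up to constants, and applying Schur's test. For $n=2$ I write
\[
\II_{\{t_1>t_2\}}=\tfrac12\bigl(1+\sgn(t_1-t_2)\bigr).
\]
The constant part factorizes into a product of two first-order operators, so it is bounded by $C_1^2\|V_1\|\|V_2\|$. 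For the $\sgn$ part I insert the principal-value formula for $\sgn$. The kernel in $(\xi,\zeta)$ then becomes
\[
\pv\int \frac{d\lm}{\lm}\int d\eta\; \wt V_1(\xi^2-\eta^2+\lm,\xi-\eta)\,\wt V_2(\eta^2-\zeta^2-\lm,\eta-\zeta).
\]
Pairing this against $f(\xi)\ov{g(\zeta)}$, I change variables to $u=\xi-\eta$, $v=\eta-\zeta$ and to the time frequencies, and absorb the weights $|u|^{1/2}$, $|v|^{1/2}$ into $L^2$ functions $G_1,G_2$. The claim is that the resulting trilinear form is a Hilbert transform in $\lm$ conjugated by the dilation group $(U_sG)(u,v,w)=G(u,e^sv,e^{-s}w)$. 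In logarithmic coordinates this dilation becomes a translation, and Plancherel in the log variable reduces the estimate to the boundedness of a one-dimensional multiplier. I expect this step to be the main obstacle: I have to verify that the Jacobians exactly cancel the $|\cdot|^{-1/2}$ weights, so that one gets scale invariance with no loss, and that the multiplier is uniformly bounded. The outcome is $\|W^{(2)}_{V_1,V_2}\|\le C_2\|V_1\|\|V_2\|$.

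\textbf{Induction for $n\ge3$.} Assume the claim holds for all orders $<n$. By homogeneity, normalize $\|V_j\|=1$; by approximation the $V_j$ are nice. I choose a splitting time $r\in[s,t]$ such that the quantity $F(r)=\sum_j\|V_j\|^2_{L^2(s,r)}$ equals half of its total; $F$ is continuous, so such an $r$ exists. The simplex $\{t>t_1>\dots>t_n>s\}$ then decomposes according to how many of the times $t_k$ exceed $r$. This gives the identity
\[
W^{(n)}_{V_1,\dots,V_n}(t,s)=\sum_{k=0}^n W^{(k)}_{V_1,\dots,V_k}(t,r)\,W^{(n-k)}_{V_{k+1},\dots,V_n}(r,s),
\]
where each factor involves only the restrictions of the $V_j$ to one half, and those restrictions have strictly smaller norm. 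The terms with $1\le k\le n-1$ are handled by the induction hypothesis and produce the product $\frac{\ep A^k}{k^2}\frac{\ep A^{n-k}}{(n-k)^2}$ times the norms. Using
\[
\sum_{k}\frac{1}{k^2(n-k)^2}\le\frac{C}{n^2},
\]
this contributes at most $\ep^2 CA^n/n^2$, which is at most $\frac{\ep}{2}\frac{A^n}{n^2}$ once $\ep$ is small.

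\textbf{The endpoint terms and the remaining gap.} The endpoint terms $k=0$ and $k=n$ are still of full order $n$. Their $L^2$ mass is smaller, but only by the factor $\prod_j\|V_j\|_{\text{half}}\le 2^{-n/2}$ on average, which does not obviously close the induction. To handle them I would instead use the splitting into the first factor and the rest,
\[
W^{(n)}_{V_1,\dots,V_n}(t,s)=-i\int_s^t S[V_1](t_1)\,W^{(n-1)}_{V_2,\dots,V_n}(t_1,s)\,dt_1,
\]
combined with the $n=2$ mechanism: the $\sgn$-kernel argument extended to a pairing of a first-order operator against an operator-valued family. Alternatively, I would iterate the halving $\log n$ times until the pieces are small enough to sum.

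Because this gap remains, I expect the core difficulty to be the $n=2$ kernel estimate, together with arranging the decomposition so that the constant gains exactly the $n^{-2}$ factor.
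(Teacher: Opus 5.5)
Your plan for $n=1$ and $n=2$ follows the paper's route, with two corrections. First, the first-order bound comes from the Hilbert--Schmidt norm: after the substitution $\omega=\xi^2-\eta^2$ this norm is exactly $\|V\|_{L^2_t\dot H^{-1/2}_x}$. Schur's test would not work, since it needs pointwise control of $\widetilde V$ that an $L^2$ hypothesis does not give. Second, for $n=2$ the paper rescales $\lambda\mapsto pq\lambda$ in the principal-value formula, which is what makes the weights cancel. It also treats $|\lambda|\ge 1/2$ separately by the crude bound $|I(\lambda)|\lesssim|\lambda|^{-1}$, and uses the dilation $1+\lambda=e^s$ only for $|\lambda|<1/2$.

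The genuine gap is the induction for $n\ge3$, which, as you note, does not close. After one split, the terms $k=0$ and $k=n$ are again of order $n$, and neither of your remedies handles them. Extending the $\operatorname{sgn}$-kernel argument to $t_1\mapsto W^{(n-1)}_{V_2,\dots,V_n}(t_1,s)$ is unsubstantiated: the $n=2$ argument uses the explicit Fourier structure of both factors, while the induction hypothesis gives only operator-norm bounds. Halving finitely many times (e.g.\ $\log n$ times) always leaves order-$n$ pieces on the finest intervals. The only bound for those in terms of $L^2_t\dot H^{-1/2}_x$ norms is the statement being proved.

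What is missing is a qualitative, $V$-dependent bound that removes the full-order terms, combined with a geometric gain across scales. Because $V_j\in\mathcal{L}(I)$ has $\operatorname{supp}\widehat{V_j}\subset\{1/R<|\xi|<R\}$, one has $\|W^{(n)}(J)\|_{\mathcal B}\le\prod_k\|V_k\|_{L^1_t(J;L^\infty_x)}\le(C_R|J|)^{n/2}\prod_k\|V_k\|_{L^2_t(J;\dot H^{-1/2}_x)}$. This bound is useless quantitatively but finite. The paper iterates the halving to $m$ dyadic levels via the binary decomposition of Lemma~\ref{lem:equal subdivision}, choosing the partition so that $V_1$ has mass exactly $2^{-m/2}\|V_1\|$ on each finest interval.
\begin{itemize}
\item The $2^m$ finest full-order terms then total $O(2^{-m/2})\to0$ by the crude bound.
\item At level $\ell$, the cross terms (bounded by the induction hypothesis) carry the factor $2^{-(\ell+1)/2}$ from $V_1$.
\item The sum over $j$ of the remaining $n-1\ge2$ factors is controlled by H\"older in $j$ with exponent $n-1$ and $\|\cdot\|_{\ell^{n-1}_j}\le\|\cdot\|_{\ell^2_j}$.
\item Summing over levels gives $(1+\sqrt2)C\varepsilon^2A^n/n^2$.
\end{itemize}

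Your own balancing also works once this ingredient is added, and your gain is not merely an average. For the balanced split of any interval $J$, apply AM--GM to $x_k=\|V_k\|^2_{L^2(\mathrm{half})}/\|V_k\|^2_{L^2(J)}$, whose sum is $n/2$. This gives $\prod_k\|V_k\|_{L^2(\mathrm{half})}\le2^{-n/2}\prod_k\|V_k\|_{L^2(J)}$ for each half. Now set $M:=\sup_{J\subseteq I}\|W^{(n)}(J)\|_{\mathcal B}/\prod_k\|V_k\|_{L^2_t(J;\dot H^{-1/2}_x)}$. One gets $M\le2^{1-n/2}M+C\varepsilon^2A^n/n^2$, with $2^{1-n/2}\le2^{-1/2}$ for $n\ge3$. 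This closes by absorption, because the crude bound gives $M\le(C_R|I|)^{n/2}<\infty$.
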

\begin{remark}
	By Theorem \ref{th:multilinear 1}, we can define $W^{(n)}_{V_1,\dots,V_n}(t,s)$ for general $V_1,\dots,V_n\in L^2_t(I;\dH^{-1/2}_x(\R))$
	as a unique extension because $\CL(I)$ is dense in $L^2_t(I;\dH^{-1/2}_x(\R))$.
\end{remark}

\subsection{First and second order terms}\label{subsec:1st and 2nd}
The first term can be estimated easily.
\begin{lemma}[First order term]\label{lem:first order term}
	There exists an absolute constant $C_1>0$ such that, for any $t,s\in\R$ and $V_1\in \CL(I)$, the multilinear estimate
	\begin{equation}
		\No{W_{V_1}^{{(1)}}(t,s)}_{\CB} \le C_1 \|V_1\|_{L^2_t (I;\dH^{-1/2}_x(\R))}
	\end{equation}
	holds, where $I=[\min(t,s),\max(t,s)]$.
\end{lemma}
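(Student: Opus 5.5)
Write $W^{(1)}_{V_1}(t,s) = -i\int_I S(-\tau)V_1(\tau)S(\tau)\,d\tau$ for $s\le t$. The case $s>t$ follows from the adjoint definition: $\|W^{(1)}_{V_1}(t,s)\|_\CB=\|W^{(1)}_{\ov{V_1}}(s,t)\|_\CB$, and $\|\ov{V_1}\|_{\dH^{-1/2}}=\|V_1\|_{\dH^{-1/2}}$ since $|\wh{\ov V}(\xi)|=|\wh V(-\xi)|$. So it suffices to treat $s\le t$.

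\textbf{Step 1: reduce to a bilinear form.} By duality we bound $|\lg g| W^{(1)}_{V_1}(t,s) f\rg|$ for $f,g\in L^2$ with unit norm. This pairing equals
\begin{equation}
\int_I\int_\R V_1(\tau,x)\,\ov{S(\tau)g(x)}\,S(\tau)f(x)\,dx\,d\tau .
\end{equation}
Extending $V_1$ by zero outside $I$ (we may take $V_1\in\CL(I)$), we apply duality between $\dH^{-1/2}_x$ and $\dH^{1/2}_x$ in $x$ and Cauchy--Schwarz in $\tau$. This bounds the pairing by
\begin{equation}
\|V_1\|_{L^2_t(I;\dH^{-1/2}_x)}\,\big\|\ov{S(\tau)g}\,S(\tau)f\big\|_{L^2_\tau(\R;\dH^{1/2}_x)} .
\end{equation}

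\textbf{Step 2: polarized Ozawa--Tsutsumi estimate.} We need
\begin{equation}
\|\ov{S(\tau)g}\,S(\tau)f\|_{L^2_\tau\dH^{1/2}_x}\ls\|f\|_{L^2}\|g\|_{L^2}.
\end{equation}
Rather than polarizing \eqref{eq:intro OT}, which only controls the diagonal, we prove this directly by a Fourier computation. The space-time Fourier transform of $\ov{S(\tau)g}S(\tau)f$ at $(\omega,\xi)$ is, up to constants,
\begin{equation}
\int \wh f(\eta)\,\ov{\wh g(\eta-\xi)}\,\delta\bigl(\omega+\eta^2-(\eta-\xi)^2\bigr)\,d\eta .
\end{equation}
The delta function fixes $\eta$ through $2\eta\xi=\xi^2-\omega$, which produces a Jacobian $1/(2|\xi|)$. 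Plancherel in $(\omega,\xi)$ then gives the square of the norm as
\begin{equation}
\iint |\xi|\cdot\frac{1}{4|\xi|^2}\,\bigl|\wh f(\eta)\wh g(\eta-\xi)\bigr|^2\cdot 2|\xi|\,d\eta\,d\xi ,
\end{equation}
after changing variables back from $\omega$ to $\eta$ (with $d\omega=2|\xi|\,d\eta$). The powers of $|\xi|$ cancel, leaving a constant multiple of $\|f\|_2^2\|g\|_2^2$.

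\textbf{Main obstacle.} The only delicate point is this Jacobian bookkeeping, in particular that the $|\xi|$ weight from $\dH^{1/2}$ exactly cancels the $1/|\xi|$ arising from the delta function. This is where the scaling-critical space $\dH^{-1/2}$ enters. Combining Steps 1 and 2 gives the lemma with an absolute constant $C_1$.
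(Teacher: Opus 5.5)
Your proof is correct, but it is organized differently from the paper's. The paper does not dualize against test functions. It bounds $\|W^{(1)}_{V_1}(t,s)\|_{\CB}$ by the Hilbert--Schmidt norm and conjugates by the Fourier transform. It then identifies the kernel in frequency variables as $\wt{V_1\II_I}(\eta^2-\xi^2,\xi-\eta)$. A single Plancherel computation finishes the proof: the change of variables $(\xi,\eta)\mapsto(\eta^2-\xi^2,\xi-\eta)$ has Jacobian $2|\xi-\eta|$, which produces exactly the $\dH^{-1/2}$ weight. This gives $\|W^{(1)}_{V_1}(t,s)\|_{\FS^2}\sim\|V_1\|_{L^2_t(I;\dH^{-1/2}_x)}$.

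You instead test against rank-one data and prove the off-diagonal Ozawa--Tsutsumi estimate by the delta-function computation, with the same Jacobian cancellation at its core. That estimate is Theorem \ref{th:Ozawa Tsutsumi}, which the paper already states in off-diagonal form. Incidentally, polarizing the diagonal estimate via $\ov{a}b=\frac14\sum_{k=0}^{3} i^{-k}|a+i^kb|^2$, followed by rescaling $f\to\lambda f$, $g\to g/\lambda$, would also have sufficed.

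The two arguments are dual. The paper's computation says that $V\mapsto\int_I S(-\tau)V(\tau)S(\tau)\,d\tau$ maps $L^2_t\dH^{-1/2}_x$ boundedly into $\FS^2$. That is equivalent to the Ozawa--Tsutsumi bound for densities of $S(\tau)\gamma S(-\tau)$ with general $\gamma\in\FS^2$. You only use rank-one $\gamma$, and hence only obtain the operator-norm bound.

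The paper's route is shorter and yields the stronger Hilbert--Schmidt conclusion. Yours makes the connection with the bilinear Strichartz estimate explicit, which is the same duality pattern the paper uses later in Lemma \ref{lem:multilinear Strichartz}. For the statement at hand, either argument suffices.
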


\begin{proof}
	A proof is given in \cite{Hadama 2026+}, but we give a proof here for completeness.
	Let $\FS^2$ be the Hilbert--Schmidt class on $L^2(\R)$ and recall
	$$\|A\|_\CB\le \|A\|_{\FS^2} = \|A(x,x')\|_{L^2_{x,x'}(\R^2)}.$$
	Then, by unitarity of the Fourier transform, we obtain
	\begin{align}
		\No{W_{V_1}^{{(1)}}(t,s)}_{\CB}
		&\le  \No{W_{V_1}^{{(1)}}(t,s)}_{\FS^2}
		= \No{\CF \int_s^t S(-t)V_1(t)S(t)dt \CF^{-1}}_{\FS^2} \\
		&=\frac{1}{\sqrt{2\pi}} \No{\int_s^t e^{it(\xi^2-\et^2)}\wh{V_1}(t,\xi-\et)dt}_{L^2_{\xi,\et}(\R^2)} 
		= \No{\wt{V_1 \II_I}(-\xi^2+\et^2,\xi-\et)}_{L^2_{\xi,\et}(\R^2)} \\
		&\ls \|V_1\|_{L^2_t(I;\dH^{-1/2}_x)},
	\end{align}
	where $\wt{V_1\II_I}(\xi,\et)$ is the space-time Fourier transform of $V_1(t,x)\II_I(t)$.
\end{proof}

In the sequel, we consider the second order term.	
\begin{lemma}[Second order term]\label{lem:second order term}
	There exists an absolute constant $C_2>0$ such that,
	for any $t,s\in\R$ and for any $V_1,V_2\in \CL(\R)$, the multilinear estimate
	\begin{equation}
		\No{W_{V_1,V_2}^{{(2)}}(t,s)}_{\CB(L^2_x(\R))} \le C_2 \prod_{j=1,2}\|V_j\|_{L^2_t (I;\dH^{-1/2}_x(\R))}
	\end{equation}
	holds, where $I=[\min(t,s),\max(t,s)]$.
\end{lemma}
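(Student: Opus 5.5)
We follow the strategy outlined in Section~\ref{subsec:linear}. Assume $s<t$ and write $I=[s,t]$; the case $s>t$ follows from the adjoint definition, since replacing $V_j$ by $\ov{V_j}$ does not change the norms. Replacing $V_j$ by $V_j\II_I$ lets us integrate over $\R^2$ with the ordering indicator $\II_{\{t_1>t_2\}}$. We then split
\begin{equation}
\II_{\{t_1>t_2\}}=\tw+\tw\sgn(t_1-t_2).
\end{equation}
The constant part gives $\tw\,\bigl(\int S[V_1]\bigr)\bigl(\int S[V_2]\bigr)$. By the proof of Lemma~\ref{lem:first order term}, this is bounded by $C_1^2\prod_j\|V_j\|_{L^2_t(I;\dH^{-1/2}_x)}$. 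The whole difficulty is therefore the sign part.

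For the sign part we conjugate by $\CF$ and compute the kernel in frequency variables $(\xi,\et)$ with an intermediate frequency $\ze$. Inserting $\sgn(t_1-t_2)=\frac{1}{i\pi}\pv\int e^{i(t_1-t_2)\lm}\lm^{-1}d\lm$, the time integrals become space-time Fourier transforms. The kernel takes the form
\begin{equation}
K(\xi,\et)=c\,\pv\!\int\!\!\int \frac{1}{\lm}\,\wt{V_1}(-\xi^2+\ze^2-\lm,\xi-\ze)\,\wt{V_2}(-\ze^2+\et^2+\lm,\ze-\et)\,d\ze\,d\lm .
\end{equation}
We set $F_j(\om,k):=|k|^{-1/2}\wt{V_j}(\om,k)$, so that $\|F_j\|_{L^2}\sim\|V_j\|_{L^2_t\dH^{-1/2}_x}$. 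We will test the operator against $f,g\in L^2$ and use duality. The pairing becomes a quadrilinear form in $\bar g(\xi),F_1,F_2,f(\et)$, carrying the weight $|\xi-\ze|^{1/2}|\ze-\et|^{1/2}/\lm$.

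Next we change variables, with $u=\ze$, $v=\xi-\ze$, $w=\ze-\et$, together with $\lm$. On the $\lm$-side, the resonance functions $\xi^2-\ze^2=v(v+2u)$ and $\ze^2-\et^2=w(2u-w)$ factor as products. After absorbing Jacobians with Cauchy--Schwarz, as in the proof of Lemma~\ref{lem:first order term}, the form should reduce to a pairing
\begin{equation}
\lg G_1\,|\,\mathcal{H}G_2\rg,
\end{equation}
where $\mathcal{H}$ is a Hilbert-transform-type singular integral acting along the orbits of the dilation $(U_sG)(u,v,w)=G(u,e^sv,e^{-s}w)$. The weight $|v|^{1/2}|w|^{1/2}$ is exactly what makes this action unitary on $L^2$.

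Passing to logarithmic coordinates $v=\pm e^{a}$, $w=\pm e^{b}$ turns the dilation into translation in $a-b$. The operator then becomes a convolution in one variable with a kernel of the form $\pv\,\frac{1}{\sinh(\cdot)}$ or similar. Its Fourier multiplier is bounded (it is a $\tanh$-type symbol), so Plancherel gives the uniform bound. We then close the estimate with $\|G_1\|\|G_2\|\ls\|f\|\|g\|\prod_j\|V_j\|$.

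The main obstacle is the change of variables. We must arrange that the principal value in $\lm$ combines with the Jacobians into a genuine dilation-invariant, translation-type singular integral, rather than an operator with a non-integrable singularity. We must also handle all sign cases of $v,w$ and the point $u=0$, and check that every step is uniform in $I$. We would first try to verify this on the explicit model $V_j=|D|^{1/2}$ of tensor-product data, before treating the general case by density.
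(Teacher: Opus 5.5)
Your outline follows the architecture announced in Section~\ref{subsec:linear}: constant part plus sign part, a principal-value representation, a dilation group, and logarithmic coordinates. The constant part is handled correctly. But the step you call ``the main obstacle'' is the whole content of the lemma (it is the paper's Lemma~\ref{lem:multi linear}). You leave it as ``should reduce'', and the form you predict for it would not work.

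\textbf{First gap: the rescaling of the principal-value variable.} You must rescale it by the product of the two potential frequencies $p=\xi-\ze$, $q=\ze-\et$ (your $v,w$):
\[
\sgn(t_1-t_2)=\frac{\sgn(pq)}{i\pi}\,\pv\int_{-\I}^{\I}\frac{e^{ipq(t_1-t_2)\lm}}{\lm}\,d\lm .
\]
Set $h_j(k,z):=\sgn(k)\wt{V_j\II_I}(-2kz,k)$, so that $\|h_j\|_{L^2(\R^2)}\sim\|V_j\|_{L^2_t(I;\dH^{-1/2}_x)}$. After $\lm\mapsto 2\lm$, the sign part becomes $\pv\int I(\lm)\lm^{-1}d\lm$ with $I(\lm)=\iiint\ov{\wh g(y+p)}\,\wh f(y-q)\,h_1(p,y+\frac p2+\lm q)\,h_2(q,y-\frac q2-\lm p)\,dp\,dq\,dy$.

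In your unrescaled kernel, $\lm$ enters only through $\lm/(2v)$ and $\lm/(2w)$, so the $\lm$-family is not generated by a fixed group action. Only after the rescaling does the $\lm$-dependent substitution $(u,v,w)=(y+p-q,\,p,\,\mu q)$, $\mu=1+\lm$, give $I(\lm)=\mu^{-1}\lg \CU_{\log\mu}G\,|\,F\rg$. Here $\|F\|_{L^2(\R^3)}=\|f\|\|h_1\|$ and $\|G\|_{L^2(\R^3)}=\|g\|\|h_2\|$. Writing $\mu=e^s$, one gets $\lm^{-1}I(\lm)\,d\lm=(e^s-1)^{-1}\lg\CU_sG\,|\,F\rg\,ds$.

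\textbf{Second gap: the dilation parameter is $\mu=1+\lm$, not $\lm$.} This picture covers only $\lm>-1$. The kernel $(e^s-1)^{-1}$ tends to $-1$ as $s\to-\I$, i.e.\ as $\lm\to-1^+$. Suppose you fold in $\lm<-1$ through the reflection $R:(v,w)\mapsto(-v,-w)$. On the $R$-odd part the total kernel is $1/\sinh s$, which gives your $\tanh$-type symbol. On the $R$-even part it is $2/(e^{2s}-1)$, whose multiplier is unbounded: it behaves like $1/\ta$ near zero frequency $\ta=0$. So the whole-line dilation operator is not bounded on general $G_1,G_2\in L^2(\R^3)$, and ``Plancherel gives the uniform bound'' fails as stated.

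The paper therefore splits the $\lm$-integral:
\begin{itemize}
\item For $|\lm|\ge 1/2$ it uses the tensor structure directly. Pairing $\wh f$ with $h_2$ and $\wh g$ with $h_1$ in Cauchy--Schwarz gives $|I(\lm)|\le|\lm|^{-1}\|f\|\|g\|\|h_1\|\|h_2\|$, which is integrable against $d\lm/|\lm|$.
\item Only for $|\lm|<1/2$ does it use the dilation picture. The kernel $(e^s-1)^{-1}$ is then truncated to $\log\frac12<s<\log(1-\ep)$ and $\log(1+\ep)<s<\log\frac32$, and its multiplier is bounded uniformly in $\ep$ by the sine-integral bound.
\end{itemize}

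Without these two steps, the proposal does not prove the lemma.
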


To prove Lemma \ref{lem:second order term}, we use the following lemma, which will be proved later.
\begin{lemma}\label{lem:multi linear}
	For any $f,g\in L^2(\R)$ and for any $h_1,h_2\in L^2(\R^2)$,
	\begin{multline}\label{eq:4th linear}
		\limsup_{\ep \to 0}\abs{\int_{\ep\le |\lm|\le 1/\ep} \frac{d\lm}{\lm} \iiint_{\R^3} dpdqdy
			\ov{g(y+p)}f(y-q)h_1\K{p,y+\frac{p+q\lm}{2}}
			h_2\K{q,y-\frac{q+\lm p}{2}}}\\
		\ls \|h_1\|_{L^2(\R^2)}\|h_2\|_{L^2(\R^2)}\|f\|_{L^2(\R)}\|g\|_{L^2(\R)}.
	\end{multline}
\end{lemma}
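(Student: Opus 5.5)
The plan is to turn the $\lambda$-integral in \eqref{eq:4th linear} into an average over a one-parameter unitary dilation group acting on an $L^2$ function built from $f,g,h_1,h_2$. That average should then be diagonalized by a Mellin-type transform and controlled by a uniformly bounded multiplier, as announced in Section~\ref{subsec:linear}. The right-hand side is a product of $L^2$ norms, so it suffices to write the truncated expression as
\[
\int_{\epsilon\le|\lambda|\le 1/\epsilon}\frac{d\lambda}{\lambda}\,\langle G\,|\,T_\lambda K\rangle ,
\]
with the following properties: $G\in L^2(\R^3)$ has $\|G\|\lesssim \|g\|_{L^2}\|h_1\|_{L^2}$; $K\in L^2(\R^3)$ has $\|K\|\lesssim\|f\|_{L^2}\|h_2\|_{L^2}$; and $T_\lambda$ is unitary, built from the dilation group $(U_sG)(u,v,w)=G(u,e^sv,e^{-s}w)$ together with harmless reflections for $\lambda<0$.

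First I would use the substitutions $Y_1=y+p/2$, $Y_2=y-q/2$ so that $g$ and $h_1$ depend only on $(p,Y_1)$, and $f$ and $h_2$ only on $(q,Y_2)$, apart from the $\lambda$-shifts $+q\lambda/2$ and $-p\lambda/2$. In these variables $y$ is determined by $(p,Y_1)$, and $Y_1-Y_2$ is tied to $p$ and $q$; eliminating $y$ produces three free variables. I would choose them so that the shifts $q\lambda$ and $p\lambda$ appear as a dilation $v\mapsto \lambda v$ in one slot and $w\mapsto \lambda^{-1}w$ in another. This is possible because $\frac{d\lambda}{\lambda}$ is dilation invariant and the product of the two shifts is quadratic in $\lambda$ in the right way. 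The change of variables must be measure-preserving up to an absolute constant, so that Cauchy--Schwarz gives the stated bounds on $\|G\|$ and $\|K\|$. Next I would split $\lambda=\pm e^{s}$, which turns $\int_{\epsilon\le|\lambda|\le1/\epsilon}\frac{d\lambda}{\lambda}$ into
\[
\int_{|s|\le \log(1/\epsilon)}\bigl(\langle G|U_sK\rangle-\langle G|U_sRK\rangle\bigr)\,ds,
\]
where $R$ is a reflection commuting with $U_s$.

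Then I would pass to logarithmic coordinates $v=\pm e^{a}$, $w=\pm e^{b}$, in which $U_s$ becomes translation $(a,b)\mapsto(a+s,b-s)$. Taking the Fourier transform in $a-b$, which is the Mellin transform in the original variables, diagonalizes $U_s$ as multiplication by $e^{is\tau}$. A naive bound on the $s$-integral would give $\int_{-L}^{L}e^{is\tau}ds$, which is unbounded. The key step is therefore to check that the difference of the $\lambda>0$ and $\lambda<0$ contributions is not a plain $U_s$-average. Before the Mellin transform I would also Fourier transform in the remaining translation variables, which the $y$-integration produces. This should give, for instance, $\mathrm{p.v.}\int e^{i\lambda(q\xi-p\eta)/2}\frac{d\lambda}{\lambda}=i\pi\,\sgn(q\xi-p\eta)$, together with the truncated sine-integral version for finite $\epsilon$. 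The claim to verify is that the effective kernel in $s$ is odd and decays, so that the resulting multiplier $m_L(\tau)$ satisfies $\sup_{L,\tau}|m_L(\tau)|<\infty$, in the same way that $\sup_{L,\tau}\bigl|\int_{-L}^{L}\frac{\sin(\tau s)}{s}\,ds\bigr|<\infty$. Plancherel in the Mellin variable then gives the bound $\sup|m_L|\,\|G\|\,\|K\|$, uniformly in $\epsilon$, which implies the $\limsup$ estimate.

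The main obstacle is this last step: choosing the variables so that $\lambda$ acts purely as the dilation $U_s$, and showing that the symbol obtained after the Mellin transform is uniformly bounded rather than logarithmically divergent. The sign structure $\sgn(q\xi-p\eta)$ depends only on the signs and on the ratio $\frac{q\xi}{p\eta}$, that is, on a difference of logarithms. This is exactly why the problem becomes translation invariant in logarithmic coordinates. If the oddness or cancellation does not come out directly, I would first try to separate the constant part (as with $\frac12(1+\sgn)$ in the second-order term) and check that only a Hilbert-transform-type odd kernel remains.
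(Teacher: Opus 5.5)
There is a genuine gap, and it sits where you locate the main obstacle: your dilation group is parametrized by the wrong quantity.

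In your pairing ($g$ with $h_1$, $f$ with $h_2$), the map $(p,q,y)\mapsto(y+p,\,p,\,y+\frac{p+q\lambda}{2})$ has determinant $\lambda/2$. So the change of variables cannot be measure-preserving up to an absolute constant. You necessarily get $I(\lambda)=c|\lambda|^{-1}\langle G|K_\lambda\rangle$ with norms independent of $\lambda$, which yields only $|I(\lambda)|\lesssim|\lambda|^{-1}\|f\|_{L^2}\|g\|_{L^2}\|h_1\|_{L^2}\|h_2\|_{L^2}$. The paper uses exactly this bound for $|\lambda|\ge1/2$, where $\int_{|\lambda|\ge1/2}|I(\lambda)|\,d\lambda/|\lambda|$ converges absolutely. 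Near $\lambda=0$, where the real singularity is, it is useless.

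Even granting a unitary $T_\lambda=\mathcal U_{\log|\lambda|}$, writing $\lambda=\pm e^s$ puts the singular point $\lambda=0$ at the noncompact end $s\to-\infty$. The cancellation needed there is only the continuity $I(0^+)=I(0^-)$, which has no rate for $L^2$ data. A reflection $R$ commuting with $\mathcal U_s$ cannot make the multiplier $2\sin(L\tau)/\tau$ bounded. Your fallback, Fourier transforming to produce $\operatorname{sgn}(q\xi-p\eta)$, undoes the $\lambda$-representation and returns you to the original time-ordered sign kernel. It gives no mechanism for $\sup_{L,\tau}|m_L(\tau)|<\infty$.

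The missing idea is to split $|\lambda|\ge 1/2$ from $|\lambda|\le1/2$. Near $\lambda=0$, use the \emph{other} pairing, $f$ with $h_1$ and $g$ with $h_2$, whose Jacobian is $1+\lambda/2$ and hence nondegenerate at $\lambda=0$.

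Rescale $\lambda$ so the shifts are $\lambda q$ and $-\lambda p$. Set $\mu=1+\lambda=e^s$ and $(u,v,w)=(y+p-q,\,p,\,\mu q)$. Then $I(\lambda)=\mu^{-1}\langle\mathcal U_sG|F\rangle$, where $F(u,v,w)=f(u-v)h_1(v,u-\frac v2+w)$, $G(u,v,w)=g(u+w)\overline{h_2(w,u-v+\frac w2)}$, and $(\mathcal U_sG)(u,v,w)=G(u,e^sv,e^{-s}w)$.

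The group parameter is therefore $\log(1+\lambda)$, not $\log\lambda$. The expression $\frac{I(\lambda)}{\lambda}\,d\lambda=\frac{\langle\mathcal U_sG|F\rangle}{e^s-1}\,ds$ is a truncated Hilbert transform along the group. It is centered at the identity $s=0$ and runs over the compact range $\log\frac12\le s\le\log\frac32$.

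In logarithmic coordinates $\mathcal U_s$ is a translation. The multiplier $\int\frac{e^{i\zeta s}}{e^s-1}\,ds$, taken over that range minus $\log(1-\epsilon)<s<\log(1+\epsilon)$, is bounded uniformly in $\zeta$ and $\epsilon$. This follows from the elementary bound on $\int_{R_1}^{R_2}\frac{\sin s}{s}\,ds$.

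Your instinct that an odd Hilbert-type kernel in logarithmic coordinates should appear is right. It only materializes once this pairing moves the singularity to the identity of the group, with large $|\lambda|$ handled separately by decay.
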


We first prove Lemma \ref{lem:second order term} assuming Lemma \ref{lem:multi linear}.
\begin{proof}[Proof of Lemma \ref{lem:second order term} assuming Lemma \ref{lem:multi linear}]
Since
$$\No{W_{V_1,V_2}^{(2)}(t,s)}_\CB = \No{W_{\ov{V_2},\ov{V_1}}^{(2)}(s,t)}_\CB,$$
we can assume $t>s$. The elementary identity
\[
\II_{\{t_1>t_2\}}=\tw \K{1+\sgn(t_1-t_2)}
\]
gives 
\begin{align}
		W_{V_1,V_2}^{(2)}(t,s)
		&=-\tw \int_I S[V_1](t_1) dt_1 \int_I S[V_2](t_2)dt_2
		-\tw \iint_{I^2}\sgn(t_1-t_2)
		S[V_1](t_1) S[V_2](t_2) dt_2 dt_1 \\
		&=: \SA + \SB.
\end{align}
Hence, it follows that
\begin{equation}
	\No{W_{V_1,V_2}^{(2)}(t,s)}_{\CB}
	\le \|\SA\|_{\CB} + \|\SB\|_{\CB}.
\end{equation}
For $\SA$, by Lemma \ref{lem:first order term}, 
\begin{equation}
\|\SA\|_{\CB} \le \tw \No{W_{V_1}^{(1)}(t,s)}_{\CB} \No{W_{V_2}^{(1)}(t,s)}_{\CB}
\ls \prod_{j=1,2} \|V_j\|_{L^2_t(I;\dH^{-1/2}_x)}.
\end{equation}
Finally, we estimate the second term $\SB$.
Let $f, g \in \CS(\R)$.
Then, we have
\begin{align}
	&\int_\R \ov{g(x)} \Big(S[V_1](t_1) S[V_2](t_2)f\Big)(x) dx \\
	&\quad =\frac{1}{2\pi}\iiint_{\R^3} \ov{\wh{g}(\xi_1)} \wh{f}(\xi_3) \wh{V_1}(t_1,\xi_1-\xi_2) \wh{V_2}(t_2,\xi_2-\xi_3) e^{it_1(|\xi_1|^2 -|\xi_2|^2)} e^{it_2(|\xi_2|^2 - |\xi_3|^2)}d\xi_1 d\xi_2 d\xi_3 
	\\
	&\quad =\frac{1}{2\pi}\iiint_{\R^3}
	\ov{\wh{g}(y+p)}\wh{f}(y-q)
	\widehat V_1(t_1,p)\widehat V_2(t_2,q)
	e^{2it_1p(y+p/2)}e^{2it_2q(y-q/2)} dpdqdy,
\end{align}
where we made the change of variables $p:=\xi_1-\xi_2$, $q:=\xi_2-\xi_3$ and $y=\xi_2$. 
Since
\begin{align}\label{eq:sign}
	\sgn(t_1-t_2)
	= \frac{1}{i\pi}\pv \int_{-\I}^\I
	\frac{e^{i(t_1-t_2)\lm}}{\lm} d\lm
	= \frac{\sgn(pq)}{i\pi}\pv \int_{-\I}^\I
	\frac{e^{ipq(t_1-t_2)\lm}}{\lm} d\lm
\end{align}
for any $p,q\in\R \setminus\{0\}$, we can compute
\begin{align}
	\lg g | \SB f\rg
	&=-\tw \int_\R \ov{g(x)} \bigg[\iint_{I^2} \sgn(t_1-t_2)S[V_1](t_1) S[V_2](t_2) dt_2dt_1 \bigg]f(x) dx \\
	&=\frac{i}{4\pi^2}\pv \int_{-\I}^\I \frac{d\lm}{\lm} \int_I dt_1 \int_I dt_2 \iiint_{\R^3} dpdqdy \\
	&\qquad \cdot \ov{\wh{g}(y+p)}\wh{f}(y-q)
	\wh{V_1}(t_1,p)\wh{V_2}(t_2,q)
	\sgn(pq)e^{ipq(t_1-t_2)\lm }e^{2it_1p(y+p/2)}e^{2it_2q(y-q/2)} \\
	&= \frac{i}{2\pi}\pv \int_{-\I}^\I \frac{d\lm}{\lm} \int_{\R^3} dpdqdy  \ov{\wh{g}(y+p)}\wh{f}(y-q)\\
	&\qquad \cdot
	\sgn(p)\wt{V_1\II_I}\K{-2p\K{y+\frac{p+q\lm}{2}},p}
	\sgn(q)\wt{V_2\II_I}\K{-2q\K{y-\frac{q+p\lm}{2}},q},
\end{align}
where $\wt{h\II_I}(\om,\xi)$ is the space-time Fourier transform of $h(t,x)\II_I(t)$. 
By Lemma \ref{lem:multi linear}, we conclude that
\begin{align}
	|\lg g | \SB f\rg|
	&\ls \No{\wt{V_1 \II_I}(\om\xi,\xi)}_{L^2_{\om,\xi}} \No{\wt{V_2\II_I}(\om\xi,\xi)}_{L^2_{\om,\xi}} \|f\|_{L^2_x} \|g\|_{L^2_x} \\
	&= \|V_1\|_{L^2_t(I;\dH^{-1/2}_x)} \|V_2\|_{L^2_t(I;\dH_x^{-1/2})} \|f\|_{L^2_x} \|g\|_{L^2_x}.
\end{align}
Therefore, 
\begin{equation}
	\|\SB\|_{\CB} \ls \|V_1\|_{L^2_t(I;\dH^{-1/2}_x)} \|V_2\|_{L^2_t(I;\dH_x^{-1/2})},
\end{equation}
which completes the proof.
\end{proof}

Finally, we prove Lemma \ref{lem:multi linear}.
\begin{proof}
\noindent \textbf{Step 0: Setup of the proof.}
For a very technical reason, we will show
\begin{multline}\label{eq:4th linear 2}
	\limsup_{\ep\to0}\abs{\int_{\ep\le|\lm|\le1/\ep} \frac{d\lm}{\lm} \iiint_{\R^3} dpdqdy
		\ov{g(y+p)}f(y-q)h_1\K{p,y+\frac{p}{2} + \lm q}
		h_2\K{q,y-\frac{q}{2} -\lm p }}\\
	\ls \|h_1\|_{L^2(\R^2)}\|h_2\|_{L^2(\R^2)}\|f\|_{L^2(\R)}\|g\|_{L^2(\R)}
\end{multline}
instead of \eqref{eq:4th linear}.
Making the change of variables in $\lm$, we can see that \eqref{eq:4th linear} and \eqref{eq:4th linear 2} are equivalent.
For any $\lm \ne 0$, define
	\begin{equation}
		I(\lm)
		:= \iiint_{\R^3} dpdqdy
		\ov{g(y+p)}f(y-q)h_1\K{p,y+\frac{p}{2} + \lm q}
		h_2\K{q,y-\frac{q}{2} -\lm p }.
	\end{equation}
	Then, we have
	\begin{align}
	&\abs{\int_{\ep\le|\lm|\le1/\ep} \frac{d\lm}{\lm} \iiint_{\R^3} dpdqdy
		\ov{g(y+p)}f(y-q)
		h_1\K{p,y+\frac{p}{2} + \lm q}	h_2\K{q,y-\frac{q}{2} -\lm p }}\\
	&\quad \ls \int_{1/2\le |\lm|\le 1/\ep} \abs{\frac{I(\lm)}{\lm}} d\lm 
	+\abs{\int_{\ep\le|\lm|\le 1/2} \frac{I(\lm)}{\lm} d\lm} =:\SA + |\SB|.
	\end{align}
	
	\noindent \textbf{Step 1: Estimate of $\SA$.}
We have
	\begin{align}
		|I(\lm)|
		&= \frac{1}{\lm^2}\bigg|\iiint_{\R^3} dpdqdy \\
		&\quad \cdot \ov{g\K{\frac{p}{\lm}}}f\K{\frac{q}{\lm}}
		              h_1\K{-y+\frac{p}{\lm}, - q+\frac{p}{2\lm} + \frac{(1+2\lm)y}{2}}
			          h_2\K{y-\frac{q}{\lm},-p+\frac{q}{2\lm}+\frac{(1+2\lm)y}{2}}\bigg| \\
		&\le \frac{1}{\lm^2} \No{f\K{\frac{q}{\lm}}
			                 h_2\K{y-\frac{q}{\lm},-p+\frac{q}{2\lm}+\frac{(1+2\lm)y}{2}}}_{L^2_{p,q,y}} \\
		&\qquad  \cdot \No{g\K{\frac{p}{\lm}} h_1\K{-y+\frac{p}{\lm},-q+\frac{p}{2\lm}+\frac{(1+2\lm)y}{2}}}_{L^2_{p,q,y}} \\
		&\ls \frac{1}{|\lm|} \|h_1\|_{L^2(\R^2)} \|h_2\|_{L^2(\R^2)} \|f\|_{L^2(\R)} \|g\|_{L^2(\R)}.
	\end{align}
	Therefore, we obtain
	\begin{align}
		\SA &\ls \int_{|\lm|\ge 1/2} \frac{d\lm }{|\lm|^2} \|h_1\|_{L^2(\R^2)} \|h_2\|_{L^2(\R^2)} \|f\|_{L^2(\R)} \|g\|_{L^2(\R)} \\
		 &\ls \|h_1\|_{L^2(\R^2)} \|h_2\|_{L^2(\R^2)} \|f\|_{L^2(\R)} \|g\|_{L^2(\R)}.
	\end{align}

	\noindent\textbf{Step 2: Estimate of $\SB$.}
	Set $\mu:=1+\lm$. Changing variables by	$(u,v,w)=(y+p-q,p,\mu q)$, we have
	\begin{equation}
		I(\lm)= \frac{1}{\mu} \iint_{\R^3} dwdudv
		         f(u-v) h_1\K{v,u-\frac{v}{2}+w} 
		         \ov{g\K{u+\frac{w}{\mu}}} h_2\K{\frac{w}{\mu},u- \mu v + \frac{w}{2\mu}}.
	\end{equation}
	Define, only for the present calculation,
	\begin{align*}
		F(u,v,w)
		&:= f(u-v)h_1\K{v,u-\frac{v}{2}+w},\\
		G(u,v,w)
		&:= g(u+w) \ov{h_2\K{w,u-v + \frac{w}{2}}}.
	\end{align*}
Define $\mu=e^s$ and $(\CU_s G)(u,v,w):=G(u,\mu v, \mu^{-1} w) = G(u,e^sv, e^{-s}w)$.
Then, we have
	\begin{equation}
		\SB = \K{\int_{\log(1/2)}^{\log(1-\ep)} + \int_{\log(1+\ep)}^{\log(3/2)}} \frac{\lg \CU_s G | F \rg_{L^2(\R^3)}}{e^s - 1} ds
	\end{equation}
	and hence
\begin{align}
	|\SB|
	&\le \No{\K{\int_{\log(1/2)}^{\log(1-\ep)} + \int_{\log(1+\ep)}^{\log(3/2)}} \frac{\CU_s G}{e^s-1} ds}_{L^2(\R^3)}
	\|F\|_{L^2(\R^3)}.
\end{align}
To complete the proof, it suffices to prove
\begin{align}
	\sup_{0<\ep\ll 1}\No{\K{\int_{\log(1/2)}^{\log(1-\ep)} + \int_{\log(1+\ep)}^{\log(3/2)}} \frac{\CU_s G}{e^s-1} ds}_{L^2(\R^3)}
	\ls \|G\|_{L^2(\R^3)}
\end{align}
because
\begin{equation}
	\|F\|_{L^2(\R^3)} = \|f\|_{L^2(\R)} \|h_1\|_{L^2(\R^2)},
	\quad \|G\|_{L^2(\R^3)} = \|g\|_{L^2(\R)} \|h_2\|_{L^2(\R^2)}.
\end{equation}
For signs $\si_1,\si_2\in\{\pm\}$, define
	\[
	H_{\si_1,\si_2}(u,v,w)
	:=
	e^{(v+w)/2}
	H(u,\si_1 e^{v},\si_2 e^{w}).
	\]
Then,
	\begin{align}
		&\|H\|_{L^2(\R^3)}^2
		= \sum_{\si_1,\si_2\in\{\pm\}} \|H(u,\si_1 v,\si_2 w)\|^2_{L^2_{u,v,w}(\R\times\R_+\times\R_+)}
		=\sum_{\si_1,\si_2\in\{\pm\}}
		\|H_{\si_1,\si_2}\|_{L^2(\R^3)}^2. \label{eq:decomposition}\\
		&(\CU_s G)_{\si_1,\si_2}(u,v,w)
		=
		G_{\si_1,\si_2}(u,v+s,w-s). \label{eq:change of variable}
	\end{align}
By \eqref{eq:decomposition} and \eqref{eq:change of variable}, it follows that
\begin{align}
	&\No{\K{\int_{\log(1/2)}^{\log(1-\ep)} + \int_{\log(1+\ep)}^{\log(3/2)}} \frac{(\CU_s G)(u,v,w)}{e^s-1} ds}_{L^2_{u,v,w}}^2 \\
	&\quad = \sum_{\si_1,\si_2\in\{\pm\}} \No{ \K{\int_{\log(1/2)}^{\log(1-\ep)} + \int_{\log(1+\ep)}^{\log(3/2)}} \frac{ G_{\si_1,\si_2}(u,v+s,w-s)}{e^s-1} ds}_{L^2_{u,v,w}}^2 \\
	&\quad = \sum_{\si_1,\si_2\in\{\pm\}} \No{ \K{\int_{\log(1/2)}^{\log(1-\ep)} + \int_{\log(1+\ep)}^{\log(3/2)}} \frac{ e^{is(\xi-\et)}}{e^s-1} ds \CG_{\si_1,\si_2}(u,\xi,\et)}_{L^2_{u,\xi,\et}}^2,
\end{align}
where $\CG_{\si_1,\si_2} := \CF_{v\to \xi,w\to\et} G_{\si_1,\si_2}$.
If 
\begin{equation}\label{eq:C_* bounded}
	C_* := \sup_{\ze \in\R,\;0<\ep\ll 1}  \abs{\K{\int_{\log(1/2)}^{\log(1-\ep)} + \int_{\log(1+\ep)}^{\log(3/2)}} \frac{ e^{i\ze s}}{e^s-1} ds}^2 < \I,
\end{equation}
then we obtain
\begin{align}
	&\sum_{\si_1,\si_2\in\{\pm\}} \No{\K{\int_{\log(1/2)}^{\log(1-\ep)} + \int_{\log(1+\ep)}^{\log(3/2)}} \frac{ e^{is(\xi-\et)}}{e^s-1} ds \CG_{\si_1,\si_2}(u,\xi,\et)}_{L^2_{u,\xi,\et}}^2 \\
	&\qquad \le C_* \sum_{\si_1,\si_2\in\{\pm\}} \|\CG_{\si_1,\si_2}\|_{L^2_{u,\xi,\et}}^2
	        = C_* \sum_{\si_1,\si_2\in\{\pm\}} \|G_{\si_1,\si_2}\|_{L^2_{u,v,w}}^2
	        = C_* \|G\|_{L^2_{u,v,w}}^2,
\end{align}
which is the desired estimate.
Therefore, it remains only to prove \eqref{eq:C_* bounded}.
We have
\begin{align}
	&\abs{\K{\int_{\log(1/2)}^{\log(1-\ep)} + \int_{\log(1+\ep)}^{\log(3/2)}} \frac{ e^{i\ze s}}{e^s-1} ds} \\
	&\quad \le \abs{\int_{\log(1/2)}^{\log(2/3)}\frac{ e^{i\ze s}}{e^s-1} ds }
	           + \abs{\int_{-\log(1+\ep)}^{\log(1-\ep)}\frac{ e^{i\ze s}}{e^s-1} ds }
	           + \abs{\K{\int_{-\log(3/2)}^{-\log(1+\ep)} + \int_{\log(1+\ep)}^{\log(3/2)}} \frac{ e^{i\ze s}}{e^s-1} ds} \\
	&\quad = \SC_1 + \SC_2 + \SC_3.
\end{align}
It is easy to see that
\begin{equation}
	\SC_1 \ls 1.
\end{equation}
For $\SC_2$, we have
\begin{equation}
	\SC_2 \le \abs{\int_{\log(1-\ep)}^{-\log(1+\ep)}\frac{1}{s} ds } + \int_{\log(1-\ep)}^{-\log(1+\ep)} \abs{\frac{1}{e^s-1}-\frac{1}{s}} ds \ls \abs{\log\K{\frac{-\log(1+\ep)}{\log(1-\ep)}}}+1 \ls 1
\end{equation}
if $0<\ep \ll 1$.
Finally, for $\SC_3$, we can compute
\begin{align}
	\SC_3
	&\ls 1+\abs{\K{\int_{-\log(3/2)}^{-\log(1+\ep)} + \int_{\log(1+\ep)}^{\log(3/2)}} \frac{ e^{i\ze s}}{s} ds}\sim 1 + \abs{\int_{\log(1+\ep)}^{\log(3/2)} \frac{\sin(\ze s)}{s} ds}  \\
	&\le 1+ \sup_{0<R_1<R_2<\I} \abs{\int_{R_1}^{R_2} \frac{\sin(s)}{s} ds} \ls 1.
\end{align}
Collecting the above estimates, we obtain \eqref{eq:C_* bounded}.
\end{proof}

\subsection{Higher order terms}\label{subsec:higher order}
In this section, we prove Theorem \ref{th:multilinear 1} for general $n \ge 3$ by induction.
As a preparation, we give an explicit partition formula (Lemma \ref{lem:equal subdivision}).
First, let $s<c<t$ and $n \ge 2$. Then,
\begin{align}\label{eq:subdivision}
	W_{V_1,\dots,V_n}^{(n)}(t,s)
	=
	W_{V_1,\dots,V_n}^{(n)}(t,c)
	+W_{V_1,\dots,V_n}^{(n)}(c,s)
	+\sum_{k=1}^{n-1}
	W_{V_1,\dots,V_k}^{(k)}(t,c)
	W_{V_{k+1},\dots,V_n}^{(n-k)}(c,s)      
\end{align}
for all $V_1,\dots,V_n\in \CL(\R)$ because, for almost all $(t_1,\dots,t_n)$, we have
\begin{align}\label{eq:simplex split}
	\II_{\{t>t_1>\cdots>t_n>s\}}
	&=
	\II_{\{t>t_1>\cdots>t_n>c>s\}}
	+ \II_{\{t>c>t_1>\cdots>t_n>s\}}+\sum_{k=1}^{n-1}
	\II_{\{t>t_1>\cdots>t_k>c\}}
	\II_{\{c>t_{k+1}>\cdots>t_n>s\}}.
\end{align}

\begin{lemma}[Decomposition identity]\label{lem:equal subdivision}
	Let $n\ge 2$ and $m\ge 1$. Let $s<t$ and $I=[s,t]$.
	Let
	\[
	s=t_0^m<t_1^m<\cdots<t_{2^m-1}^m<t_{2^m}^m=t
	\]
	be a partition of $I$.  For $0\leq\ell\leq m$ and
	$0\leq j\leq 2^\ell$, define coarser partitions by
	\[
	t_j^\ell:=t_{2^{m-\ell}j}^m.
	\]
	Then, for any $V_1,\dots,V_n \in \CL(\R)$, 
	\begin{equation}\label{eq:binary decomposition}
		\begin{aligned}
			W_{V_1,\ldots,V_n}^{(n)}(t,s)
			&=\sum_{j=0}^{2^m-1}
			W_{V_1,\ldots,V_n}^{(n)}(t_{j+1}^m,t_j^m) \\
			&\quad+\sum_{\ell=0}^{m-1}\sum_{j=0}^{2^\ell-1}
			\sum_{k=1}^{n-1}
			W_{V_1,\ldots,V_k}^{(k)}
			(t_{2j+2}^{\ell+1},t_{2j+1}^{\ell+1})
			W_{V_{k+1},\ldots,V_n}^{(n-k)}
			(t_{2j+1}^{\ell+1},t_{2j}^{\ell+1}).
		\end{aligned}
	\end{equation}
\end{lemma}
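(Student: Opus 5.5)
The plan is induction on $m\ge1$, applying the one-step splitting identity \eqref{eq:subdivision} repeatedly along the dyadic refinement of the partition.

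\textbf{Base case $m=1$.} The partition is $s=t^1_0<t^1_1<t^1_2=t$, and the only coarser level is $\ell=0$, with $t^0_0=s$ and $t^0_1=t$. Applying \eqref{eq:subdivision} with $c=t^1_1$ gives
\[
W^{(n)}(t,s)=W^{(n)}(t^1_2,t^1_1)+W^{(n)}(t^1_1,t^1_0)+\sum_{k=1}^{n-1}W^{(k)}_{V_1,\dots,V_k}(t^1_2,t^1_1)\,W^{(n-k)}_{V_{k+1},\dots,V_n}(t^1_1,t^1_0).
\]
This is exactly \eqref{eq:binary decomposition} for $m=1$: the first sum runs over $j=0,1$, and the double sum has the single index pair $\ell=0$, $j=0$.

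\textbf{Inductive step.} Suppose the identity holds for $m$, and let a partition $(t^{m+1}_j)_{j=0}^{2^{m+1}}$ be given. Its even-indexed points $t^m_j=t^{m+1}_{2j}$ form a partition with $2^m$ intervals. The coarser points produced from this partition by the lemma's recipe agree with those from the original one, because $t^{m+1}_{2^{m+1-\ell}j}=t^{m}_{2^{m-\ell}j}$. So the induction hypothesis gives \eqref{eq:binary decomposition} with level-$\ell$ terms for $0\le\ell\le m-1$ and the diagonal sum $\sum_{j=0}^{2^m-1}W^{(n)}(t^m_{j+1},t^m_j)$. Now split each diagonal term at its midpoint $t^{m+1}_{2j+1}$, again using \eqref{eq:subdivision}:
\[
W^{(n)}(t^{m+1}_{2j+2},t^{m+1}_{2j})=W^{(n)}(t^{m+1}_{2j+2},t^{m+1}_{2j+1})+W^{(n)}(t^{m+1}_{2j+1},t^{m+1}_{2j})+\sum_{k=1}^{n-1}W^{(k)}(t^{m+1}_{2j+2},t^{m+1}_{2j+1})\,W^{(n-k)}(t^{m+1}_{2j+1},t^{m+1}_{2j}).
\]
Summing over $j$, the first two pieces give the diagonal sum over all $2^{m+1}$ fine intervals. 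The cross terms are exactly the new level $\ell=m$ contribution $\sum_{j=0}^{2^m-1}\sum_{k=1}^{n-1}W^{(k)}(t^{m+1}_{2j+2},t^{m+1}_{2j+1})W^{(n-k)}(t^{m+1}_{2j+1},t^{m+1}_{2j})$. This closes the induction.

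\textbf{Justifying \eqref{eq:subdivision}.} The only substantive input is \eqref{eq:subdivision} itself. For $V_j\in\CL(\R)$ the integrands $S[V_j](t_j)$ are bounded operators, continuous in $t_j$, so the time-ordered integral is an absolutely convergent Bochner integral. One multiplies the indicator identity \eqref{eq:simplex split}, which holds off the measure-zero set where some $t_j=c$, by $S[V_1](t_1)\cdots S[V_n](t_n)$ and integrates. In each product term the variables $t_1,\dots,t_k$ and $t_{k+1},\dots,t_n$ separate, so Fubini factorizes the integral into $W^{(k)}(t,c)\,W^{(n-k)}(c,s)$; the powers $(-i)^k(-i)^{n-k}=(-i)^n$ match. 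All times here satisfy $s<c<t$, so only the forward definition \eqref{eq:def WV} is used.

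\textbf{Main obstacle.} I expect no real obstacle. The step most prone to error is the index bookkeeping showing that the cross terms generated at the finest step are precisely the $\ell=m$ terms, with the intervals $(t^{\ell+1}_{2j},t^{\ell+1}_{2j+1})$ and $(t^{\ell+1}_{2j+1},t^{\ell+1}_{2j+2})$ as the two halves of $(t^\ell_j,t^\ell_{j+1})$.
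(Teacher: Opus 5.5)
Your proposal is correct and follows the paper's proof essentially verbatim: induction on $m$, with the base case $m=1$ given by \eqref{eq:subdivision} at $c=t^1_1$, and the inductive step splitting each finest-level term $W^{(n)}_{V_1,\dots,V_n}(t^m_{j+1},t^m_j)$ at the midpoint $t^{m+1}_{2j+1}$, so that the resulting cross terms are exactly the new $\ell=m$ level. Your justification of \eqref{eq:subdivision} via \eqref{eq:simplex split} and Fubini is also fine, with one minor caveat: $t\mapsto S[V](t)$ is only strongly (not norm) continuous, so the time-ordered integrals should be read as $L^2$-valued integrals applied to a fixed vector, not as $\CB$-valued Bochner integrals.
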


\begin{proof}
We prove \eqref{eq:binary decomposition} by induction on $m$.
First, the observation \eqref{eq:subdivision} gives \eqref{eq:binary decomposition} for $m=1$.
Assume that \eqref{eq:binary decomposition} holds for $1, \dots, m$.
We now consider the case $m+1$.
Let $t_j^m=t_{2j}^{m+1}$ for $j \in \{0,1,\dots,2^m\}$.
Applying the induction hypothesis, we obtain \eqref{eq:binary decomposition}.
For each $j\in\{0,\ldots,2^m-1\}$, we have
	\[
	t_j^m=t_{2j}^{m+1}
	<t_{2j+1}^{m+1}
	<t_{2j+2}^{m+1}=t_{j+1}^m.
	\]
	We may therefore apply \eqref{eq:subdivision} to the corresponding term in the
	first sum in \eqref{eq:binary decomposition}, with
	\[
	a=t_j^m=t^{m+1}_{2j},
	\qquad
	c=t_{2j+1}^{m+1},
	\qquad
	b=t_{j+1}^m=t^{m+1}_{2j+2}.
	\]
	It follows that
	\begin{equation}
	\label{eq:split-one-coarse-interval}
	\begin{aligned}
		W_{V_1,\dots,V_n}^{(n)}(t_{j+1}^m,t_j^m)
		&=W_{V_1,\dots,V_n}^{(n)}
		(t_{2j+2}^{m+1},t_{2j+1}^{m+1})
		+W_{V_1,\dots,V_n}^{(n)}
		(t_{2j+1}^{m+1},t_{2j}^{m+1})\\
		&\quad+\sum_{k=1}^{n-1}
		W_{V_1,\dots,V_{k}}^{(k)}
		(t_{2j+2}^{m+1},t_{2j+1}^{m+1})
		W_{V_{k+1},\dots,V_n}^{(n-k)}
		(t_{2j+1}^{m+1},t_{2j}^{m+1}).
	\end{aligned}
\end{equation}
	Summing the first two terms on the right-hand side of
	\eqref{eq:split-one-coarse-interval} over $j$ gives
	\begin{align*}
		\sum_{j=0}^{2^m-1}
		\Big[
		W_{V_1,\ldots,V_n}^{(n)}
		(t_{2j+2}^{m+1},t_{2j+1}^{m+1})
		+W_{V_1,\ldots,V_n}^{(n)}
		(t_{2j+1}^{m+1},t_{2j}^{m+1})
		\Big] =\sum_{j=0}^{2^{m+1}-1}
		W_{V_1,\ldots,V_n}^{(n)}(t_{j+1}^{m+1},t_j^{m+1}).
	\end{align*}
	Moreover, the remaining terms in
	\eqref{eq:split-one-coarse-interval}, summed over $j$, are
	\[
	\sum_{j=0}^{2^m-1}\sum_{k=1}^{n-1}
	W_{V_1,\dots,V_{k}}^{(k)}
	(t_{2j+2}^{m+1},t_{2j+1}^{m+1})
	W_{V_{k+1},\dots,V_n}^{(n-k)}
	(t_{2j+1}^{m+1},t_{2j}^{m+1}),
	\]
	which is precisely the contribution with $\ell=m$ in the second sum
	in \eqref{eq:binary decomposition}.
	Collecting the above identities, we obtain \eqref{eq:binary decomposition} for $m+1$, and the induction is complete.
\end{proof}

Now, we give a complete proof of Theorem \ref{th:multilinear 1}.
\begin{proof}[Proof of Theorem \ref{th:multilinear 1}]
	\noindent \textbf{Step 0: Setup of the proof.}
	Note that we can assume $s<t$ because
	$$
	\No{W_{V_1,\dots,V_n}^{(n)}(t,s)}_{\CB} = \No{W_{\ov{V_n},\dots,\ov{V_1}}^{(n)}(s,t)}_{\CB}.
	$$
	Let $C_1$ and $C_2$ be constants in Lemmas \ref{lem:first order term} and \ref{lem:second order term}.
	Choose sufficiently small $0<\ep\ll 1$ and set
	$$A := \max\K{1,\frac{C_1}{\ep},\sqrt{\frac{4C_2}{\ep}}}.$$
	Then, Lemmas \ref{lem:first order term} and \ref{lem:second order term} imply that \eqref{eq:strong induction 1} holds for $n=1,2$.
	Let $n\ge3$ and suppose \eqref{eq:strong induction 1} holds for $1,2,\dots,n-1$.
	Let $m\ge 1$.
	Let $V_1,\dots,V_n \in\CL(I)$.
	Then, there exists $R>1$ such that $\supp_{t,\xi} \wh{V_j}(t,\xi) \subset I\times \{1/R<|\xi|<R\}$ for $j=1,\dots,n$.
	There exists a partition
	\[
	s=t_0^m<t_1^m<\cdots<t_{2^m-1}^m<t_{2^m}^m=t
	\]
	such that
	\begin{equation}\label{eq:subdivision of interval}
		\|V_1\|_{L^2_t(t_j^m,t_{j+1}^m;\dH^{-1/2}_x)} = 2^{-m/2} \|V_1\|_{L^2_t(I;\dH^{-1/2}_x)}
		\quad \text{for all} \quad j=0,1,\dots,2^m-1.
	\end{equation}
	By Lemma \ref{lem:equal subdivision}, we have 
	\begin{multline}
		\No{W_{V_1,\dots,V_n}^{(n)}(t,s)}_{\CB} 
		\le \sum_{j=0}^{2^m-1}
		\No{W_{V_1,\dots,V_n}^{(n)}
			(t^m_{j+1},t^m_j)}_{\CB}\\
		+
		\sum_{\ell=0}^{m-1}\sum_{j=0}^{2^\ell-1}\sum_{k=1}^{n-1}
		\No{W_{V_1,\dots,V_{k}}^{(k)}(t^{\ell+1}_{2j+2},t^{\ell+1}_{2j+1})}_{\CB}
		\No{W_{V_{k+1},\dots,V_n}^{(n-k)} (t^{\ell+1}_{2j+1},t^{\ell+1}_{2j})}_{\CB}
		=: \SA_m + \SB_m.
	\end{multline}
	\noindent \textbf{Step 1: Estimate of $\SA_m$.} For $\SA_m$, by the H\"older inequality and \eqref{eq:subdivision of interval}, we have 
	\begin{align}
		\SA_m &\le \sum_{j=0}^{2^m-1} \prod_{k=1}^n \|V_k\|_{L^1_t(t_j^{m},t_{j+1}^m;L^\I_x)} \\
		&\le \sum_{j=0}^{2^m-1} |t_{j+1}^m - t_j^m|^{n/2} \prod_{k=1}^n
		\K{\int_{|\xi|\le R}|\xi|d\xi}^{1/2}\|V_k\|_{L^2_t(t_j^{m},t_{j+1}^m;\dH_x^{-1/2})} \\
		&= 2^{-m/2} C_R^{n/2}\sum_{j=0}^{2^m-1} |t_{j+1}^m - t_j^m|^{n/2} \prod_{i=2}^n \|V_i\|_{L^2_t(I;\dH^{-1/2}_x)},
	\end{align}
	where $C_R := \int_{|\xi|\le R} |\xi|d\xi$.
	Since
	$$\sum_{j=0}^{2^m-1} |t_{j+1}^m - t_j^m|^{n/2}
	\le |I|^{n/2-1} \sum_{j=0}^{2^m-1} |t_{j+1}^m - t_j^m|\le |I|^{n/2},$$
	we obtain
	\begin{equation}
		\SA_m \ls 2^{-m/2} \to 0
	\end{equation}
	as $m\to\I$.
	
	\noindent \textbf{Step 2: Estimate of $\SB_m$.}
	For $\SB_m$, by the induction hypothesis and \eqref{eq:subdivision of interval},
	\begin{align}
		\SB_m
		&\le \sum_{\ell=0}^{m-1}\sum_{j=0}^{2^\ell-1}\sum_{k=1}^{n-1} 
		\No{W_{V_1,\ldots,V_k}^{(k)}(t^{\ell+1}_{2j+2},t^{\ell+1}_{2j+1})}_{\CB}
		\No{W_{V_{k+1},\ldots,V_n}^{(n-k)} (t^{\ell+1}_{2j+1},t^{\ell+1}_{2j})}_{\CB} \\
		&\le \ep^2 A^n \sum_{\ell=0}^{m-1}\sum_{j=0}^{2^\ell-1}\sum_{k=1}^{n-1} 
		\frac{1}{(n-k)^2 k^2}
		\prod_{i=1}^{k} \|V_i\|_{L^2_t(t^{\ell+1}_{2j+1},t^{\ell+1}_{2j+2};\dH^{-1/2}_x)}
		\prod_{i'=k+1}^{n} \|V_{i'}\|_{L^2_t(t^{\ell+1}_{2j},t^{\ell+1}_{2j+1};\dH^{-1/2}_x)} \\
		&\le \ep^2 A^n \sum_{\ell=0}^{m-1}\sum_{k=1}^{n-1} 
	\frac{2^{-(\ell+1)/2}}{(n-k)^2 k^2} \|V_1\|_{L^2_t(I;\dH^{-1/2}_x)} \\
	    &\qquad \cdot \sum_{j=0}^{2^\ell-1} 
	    \prod_{i=2}^{k} \|V_i\|_{L^2_t(t^{\ell+1}_{2j+1},t^{\ell+1}_{2j+2};\dH^{-1/2}_x)}
	    \prod_{i'=k+1}^{n} \|V_{i'}\|_{L^2_t(t^{\ell+1}_{2j},t^{\ell+1}_{2j+1};\dH^{-1/2}_x)},
	\end{align}
	where we used
	\begin{align}
		\|V_1\|_{L^2_t(t^{\ell+1}_{2j+1},t^{\ell+1}_{2j+2};\dH^{-1/2}_x)}
		= 2^{(m-\ell-1)/2}2^{-m/2}\|V_1\|_{L^2_t(I;\dH^{-1/2}_x)} = 2^{-(\ell+1)/2} \|V_1\|_{L^2_t(I;\dH^{-1/2}_x)}.
	\end{align}
	Applying the H\"older inequality to the summation over $j$, we obtain
	\begin{align}
		&\sum_{j=0}^{2^\ell-1} 		
		\prod_{i=2}^{k} \|V_i\|_{L^2_t(t^{\ell+1}_{2j+1},t^{\ell+1}_{2j+2};\dH^{-1/2}_x)}
		\prod_{i'=k+1}^{n} \|V_{i'}\|_{L^2_t(t^{\ell+1}_{2j},t^{\ell+1}_{2j+1};\dH^{-1/2}_x)} \\
		&\quad \le 		
		\prod_{i=2}^{k}\K{ \sum_{j=0}^{2^\ell-1} \|V_i\|_{L^2_t(t^{\ell+1}_{2j+1},t^{\ell+1}_{2j+2};\dH^{-1/2}_x)}^{n-1}}^{1/(n-1)}
		\prod_{i'=k+1}^{n} \K{\sum_{j=0}^{2^\ell-1} \|V_{i'}\|^{n-1}_{L^2_t(t^{\ell+1}_{2j},t^{\ell+1}_{2j+1};\dH^{-1/2}_x)}}^{1/(n-1)} \\ 
		&\quad \le 
		\prod_{i=2}^{k}\K{ \sum_{j=0}^{2^\ell-1} \|V_i\|^2_{L^2_t(t^{\ell+1}_{2j+1},t^{\ell+1}_{2j+2};\dH^{-1/2}_x)}}^{1/2}
		\prod_{i'=k+1}^{n} \K{\sum_{j=0}^{2^\ell-1} \|V_{i'}\|^2_{L^2_t(t^{\ell+1}_{2j},t^{\ell+1}_{2j+1};\dH^{-1/2}_x)}}^{1/2} \\
		&\quad \le \prod_{i=2}^n \|V_i\|_{L^2_t(I;\dH^{-1/2}_x)}. 
	\end{align}
	Moreover, we have
	\begin{equation}
		\sum_{k=1}^{n-1} \frac{1}{k^2(n-k)^2} \le \frac{C}{n^2},
	\end{equation}
	where $C>0$ is an absolute constant. 
	Hence, we have
	\begin{equation}
		\SB_m \le
		\frac{(1+\sqrt{2}) C\ep^2 A^n}{n^2} \prod_{j=1}^n \|V_j\|_{L^2_t(I;\dH^{-1/2}_x)}
		\le \frac{\ep A^n}{n^2} \prod_{j=1}^n \|V_j\|_{L^2_t(I;\dH^{-1/2}_x)},
	\end{equation}
	where we used $\ep\ll 1$ to get $(1+\sqrt{2}) C\ep \le 1$.
	
	\noindent \textbf{Step 3: Conclusion.}
	Collecting the above estimates, we have
	\begin{equation}
		\No{W_{V_1,\dots,V_n}^{(n)}(t,s)}_{\CB} 
		\le \limsup_{m\to\I} \SA_m + \limsup_{m\to\I}\SB_m
		\le \frac{\ep A^n}{n^2} \prod_{j=1}^n \|V_j\|_{L^2_t(I;\dH^{-1/2}_x)}.
	\end{equation}
\end{proof}

\section{Construction of perturbed propagators and Strichartz estimates}\label{sec:SV Stri}
In this section, we first construct the perturbed propagator $S_V$ and inverse wave operators $W_V^\pm$ for arbitrary $V \in L^2_t(\R;\dH^{-1/2}_x)$ (proof of Theorem \ref{th:construction of SV}).
We then prove two types of Strichartz estimates for $S_V$ when $V$ is small (proof of Theorem \ref{th:Stri}).

\subsection{Construction of propagators and inverse wave operators}\label{sec:SV}
Now, we prove Theorem \ref{th:construction of SV} by applying the multilinear estimates obtained in the previous section.
First, we give a precise definition of $S_V$ when $V$ is small.
\begin{definition}[Definition of $S_V$ with a smallness assumption]\label{def:UV}
	Let $A>0$ be the constant in Theorem \ref{th:multilinear 1}.
	Let $I\subset \R$ be an interval.
	Assume that $V \in L^2_t(I;\dH^{-1/2}_x(\R))$ satisfies
	\begin{equation}\label{eq:smallness}
		\de_0 := A\|V\|_{L^2_t(I;\dH^{-1/2}_x(\R))} < 1.
	\end{equation}
	Then, we define $(S_V(t,s))_{t,s\in I}$ as
	\begin{equation}\label{eq:UV definition}
		S_V(t,s) = S(t)\sum_{n\ge 0} W_V^{(n)}(t,s)S(-s).
	\end{equation}
	Note that $S_V(s,s) = \Id_{L^2_x(\R)}$ for all $s\in I$.
	When $0\in I$, to shorten the notation, we write $S_V(t):=S_V(t,0)$.
\end{definition}
\begin{remark}[Bound of operator norms]
	By Theorem \ref{th:multilinear 1} and the smallness assumption for $V$, the infinite series on the right-hand side of \eqref{eq:UV definition} absolutely converges in $\CB$. Moreover, we have
	\begin{equation}\label{eq:bound of UV}
		\sup_{t,s\in I} \|S_V(t,s)\|_{\CB} \le \frac{2}{1-\de_0}.
	\end{equation}
\end{remark}

\begin{remark}[Coincidence with the classical definition of $S_V$ for smooth $V$]\label{rmk:coincides}
	When a very nice function $V$ satisfies \eqref{eq:smallness}, we can define $S_V$ in two different ways. 
	On the one hand, we can define $S_V$ as the solution to the Duhamel formula \eqref{eq:Duhamel}.
	On the other hand, we can define $S_V$ by Definition \ref{def:UV}.
	However, these two definitions coincide.
	Indeed, if we repeatedly use the Duhamel formula \eqref{eq:Duhamel}, then we obtain \eqref{eq:UV definition}.
\end{remark}

\begin{remark}[Finite-time wave operator]\label{rmk:WV}
	Define the finite-time wave operator as $W_V(t,s):=S(-t)S_V(t,s)S(s)$. Then, we have
	\begin{equation}\label{eq:WV}
		W_V(t,s) = \sum_{n\ge 0} W_V^{(n)}(t,s).
	\end{equation}
	To shorten the notation, we write $W_V(t):=W_V(t,0)$.
\end{remark}

The propagator defined in Definition \ref{def:UV} is stable in the following sense.
\begin{lemma}[Stability estimate]\label{lem:difference estimate}
	Let $I \subset \R$ be an interval.
	Assume that $V$ and $V'$ satisfy
	$$\de_1:= \max\K{A\|V\|_{L^2_t(I;\dH^{-1/2}_x(\R))}, A\|V'\|_{L^2_t(I;\dH^{-1/2}_x(\R))}}< 1.$$
	Then, we have
	\begin{equation}\label{eq:small stability}
		\sup_{t,s\in I}\No{S_V(t,s)-S_{V'}(t,s)}_{\CB}
		\le \frac{\ep A}{1-\de_1} \|V-V'\|_{L^2_t(I;\dH^{-1/2}_x(\R))},
	\end{equation}
	where $\ep$ and $A$ are constants in Theorem \ref{th:multilinear 1}. 
\end{lemma}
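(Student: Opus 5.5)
The plan is to compare the two Dyson series term by term, using the multilinearity of $W^{(n)}$ together with Theorem \ref{th:multilinear 1}. Conjugation by the unitary free flow changes nothing:
\begin{equation}
	\|S_V(t,s)-S_{V'}(t,s)\|_{\CB}=\|W_V(t,s)-W_{V'}(t,s)\|_{\CB}.
\end{equation}
Both series converge absolutely in $\CB$ under the hypothesis $\de_1<1$. Subtracting them gives
\begin{equation}
	W_V(t,s)-W_{V'}(t,s)=\sum_{n\ge1}\Big(W^{(n)}_{V,\dots,V}(t,s)-W^{(n)}_{V',\dots,V'}(t,s)\Big),
\end{equation}
since the $n=0$ terms cancel.

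Each summand is then split by a telescoping identity:
\begin{equation}
	W^{(n)}_{V,\dots,V}-W^{(n)}_{V',\dots,V'}=\sum_{k=1}^{n}W^{(n)}_{V',\dots,V',\,V-V',\,V,\dots,V},
\end{equation}
where $V-V'$ sits in the $k$-th slot, the first $k-1$ slots carry $V'$, and the last $n-k$ slots carry $V$. This identity is exact for $V,V'\in\CL(I)$ because $W^{(n)}$ is multilinear. It passes to general potentials through the unique continuous multilinear extension given by the remark after Theorem \ref{th:multilinear 1}.

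Let $J=[\min(t,s),\max(t,s)]\subset I$. Applying Theorem \ref{th:multilinear 1} on $J$, and using that each norm over $J$ is bounded by the corresponding norm over $I$, each of the $n$ telescoping terms is bounded by
\begin{equation}
	\frac{\ep A^n}{n^2}\,\|V-V'\|_{L^2_t(I;\dH^{-1/2}_x)}\,A^{-(n-1)}\de_1^{\,n-1}.
\end{equation}
Summing the $n$ terms, the $n$-th summand is at most $\frac{\ep A}{n}\de_1^{n-1}\|V-V'\|$. Using $\frac1n\le1$ and summing the geometric series in $\de_1$, the total is at most
\begin{equation}
	\frac{\ep A}{1-\de_1}\|V-V'\|_{L^2_t(I;\dH^{-1/2}_x)}.
\end{equation}
This bound is uniform in $t,s\in I$, which gives the claimed estimate.

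The only point needing care is that the telescoping and the application of Theorem \ref{th:multilinear 1} are legitimate for non-smooth $V,V'$, i.e.\ outside $\CL(I)$. I would handle this by approximating $V,V'$ by elements of $\CL(I)$ whose norms still keep $\de_1<1$. For such approximants the estimate above holds directly. Continuity of each $W^{(n)}$ in its arguments, together with the uniform geometric bounds, then lets the inequality pass to the limit. Everything else is bookkeeping.
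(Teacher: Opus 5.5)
Your proposal is correct and follows essentially the same argument as the paper: subtract the two Dyson series, telescope each $W^{(n)}$ difference into $n$ terms with $V-V'$ in one slot, bound each term by Theorem \ref{th:multilinear 1}, and sum using $\sum_{n\ge1}\de_1^{n-1}/n\le (1-\de_1)^{-1}$. Your extra remark on passing from $\CL(I)$ to general potentials is a detail the paper leaves implicit. It is in fact automatic, since the continuous multilinear extension inherits both multilinearity and the bound.
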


\begin{remark}\label{rmk:natural}
By Lemma \ref{lem:difference estimate}, we can see that the construction of $S_V$ in Definition \ref{def:UV} is natural in the following sense.
If $V$ satisfies \eqref{eq:smallness}, then we can choose a sequence of very nice functions $V_j$ such that $V_j \to V$ in $L^2_t(I;\dH^{-1/2}_x(\R))$.
Then, by Lemma \ref{lem:difference estimate},
$$S_{V_j}(t,s)\to S_V(t,s) \quad \text{as} \quad j\to\I.$$
Therefore, every propagator $S_V$ constructed in Definition \ref{def:UV} is the unique limit of classical propagators obtained from smooth approximations of $V$.
\end{remark}

\begin{proof}[Proof of Lemma \ref{lem:difference estimate}]
	By Theorem \ref{th:multilinear 1}, we obtain
	\begin{align}
		\|S_V(t,s)-S_{V'}(t,s)\|_{\CB}
		&= \No{\sum_{n\ge 0} W_{V,\dots,V}^{(n)}(t,s)- \sum_{n\ge 0} W_{V',\dots,V'}^{(n)}(t,s)}_{\CB} \\
		&\le \sum_{n\ge 1} \No{W^{(n)}_{V,\dots,V}(t,s) - W_{V',\dots,V'}^{(n)}(t,s)}_{\CB} \\
		&\le \sum_{n\ge 1} \Big(\|W^{(n)}_{V-V',V\dots,V}(t,s)\|_{\CB} +\cdots + \|W^{(n)}_{V',\dots,V',V-V'}(t,s)\|_{\CB} \Big) \\
		&\le \ep A \|V-V'\|_{L^2_t(I;\dH^{-1/2}_x)} \sum_{n\ge 1} \frac{\de_1^{n-1}}{n}
		\le \frac{\ep A}{1-\de_1} \|V-V'\|_{L^2_t(I;\dH^{-1/2}_x)}.
	\end{align}
	Since the above estimates are uniform in $(t,s)$, we obtain \eqref{eq:small stability}.
\end{proof}

Next, we prove some basic properties of $S_V(t,s)$.
\begin{lemma}[Basic properties of $S_V$ with small $V$]\label{lem:basic UV}
Let $I \subset \R$ be an interval.
Assume that $V \in L^2_t(I;\dH^{-1/2}_x(\R))$ satisfies
	\begin{equation}\label{eq:V condition}
		A \|V\|_{L^2_t(I;\dH^{-1/2}_x(\R))}\le \tw,
	\end{equation}
where $A$ is the constant in Theorem \ref{th:multilinear 1}.
	Then, the family of bounded operators $(S_V(t,s))_{t,s\in I}$ satisfies the following:
	\begin{enumerate}[$(i)$]
		\item For all $t\in I$, $S_V(t,t) = \Id_{L^2_x(\R)}$.
		\item For all $t,s,r\in I$, $S_V(t,s)S_V(s,r) = S_V(t,r)$. In particular, $S_V(t,s)^{-1} = S_V(s,t)$.
		\item Both $t\mapsto S_V(t,s)$ and $s\mapsto S_V(t,s)$ are strongly continuous in $L^2_x(\R)$.
	\end{enumerate}
\end{lemma}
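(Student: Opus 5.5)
The plan is to prove all three properties first for nice potentials $V\in\CL(I)$ and then pass to general $V$ by density, using the stability estimate (Lemma \ref{lem:difference estimate}). Under \eqref{eq:V condition} we have $\de_1\le 1/2$, so all constants are uniform.

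\emph{Reduction to nice potentials.} Let $V$ satisfy \eqref{eq:V condition}. Choose $V_j\in\CL(I)$ with $V_j\to V$ in $L^2_t(I;\dH^{-1/2}_x)$ and, after rescaling slightly, $A\|V_j\|\le 3/4$. Lemma \ref{lem:difference estimate} gives $S_{V_j}(t,s)\to S_V(t,s)$ in $\CB$, uniformly in $t,s\in I$. Property (i) is immediate from Definition \ref{def:UV}, since $W_V^{(n)}(t,t)=0$ for $n\ge1$.

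\emph{Property (ii).} For $V_j\in\CL(I)$, each $V_j(t)$ is a bounded multiplication operator with $\|V_j\|_{L^1_tL^\I_x}<\I$. By Remark \ref{rmk:coincides}, $S_{V_j}$ then coincides with the classical propagator solving \eqref{eq:Duhamel}, and the classical propagator satisfies the evolution property. This can also be checked directly at the level of $W$: the simplex-splitting identity \eqref{eq:subdivision} gives
\[
\sum_n W^{(n)}(t,r)=\Big(\sum_k W^{(k)}(t,s)\Big)\Big(\sum_l W^{(l)}(s,r)\Big)
\]
for $r<s<t$. For other orderings of $t,s,r$, use the definition of $W^{(n)}(t,s)$ for $s>t$ via adjoints; the composition law $W_V(s,t)W_V(t,s)=\Id$ is the classical fact for nice $V$. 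Uniform convergence in $\CB$, together with the uniform bound \eqref{eq:bound of UV}, lets us pass to the limit in products. Hence $S_V(t,s)S_V(s,r)=S_V(t,r)$, and taking $r=t$ together with (i) gives $S_V(t,s)^{-1}=S_V(s,t)$.

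\emph{Property (iii).} For strong continuity in $t$, I would use $S_V(t,s)=S(t)W_V(t,s)S(-s)$. Since $S(t)$ is strongly continuous and $W_V$ is bounded uniformly, it suffices to show that $t\mapsto W_V(t,s)$ is continuous in norm. For $t'>t>s$, property (ii) at the level of $W$ gives
\[
W_V(t',s)-W_V(t,s)=(W_V(t',t)-\Id)W_V(t,s).
\]
Moreover, $W_V(t',t)-\Id=\sum_{n\ge1}W^{(n)}_V(t',t)$. By Theorem \ref{th:multilinear 1} applied on $[t,t']$, its norm is at most $\sum_{n\ge1}\ep A^n\|V\|^n_{L^2(t,t';\dH^{-1/2})}/n^2$, which tends to $0$ as $t'\to t$ by absolute continuity of the $L^2_t$ integral. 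The cases $t'<t$ and continuity in $s$ follow in the same way, or from the adjoint identity. This is where scaling-criticality matters: the estimate is uniform only because the norm restricted to short intervals goes to zero.

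The main obstacle is making (ii) rigorous when the three times are not ordered. The key point is that the adjoint-based definition of $W^{(n)}(t,s)$ for $s>t$ really yields the inverse series. I would settle this by identifying $S_{V_j}$ with the classical invertible propagator for smooth $V_j$ (Remark \ref{rmk:coincides}), rather than by manipulating the series combinatorially.
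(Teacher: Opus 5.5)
Your proposal is correct. Parts $(i)$ and $(ii)$ follow the paper's argument: you identify $S_{V_j}$, for $V_j\in\CL(I)$, with the classical propagator, and then transfer the evolution property to $V$ using the convergence in $\CB$, uniform in $t,s\in I$, given by Lemma \ref{lem:difference estimate}.

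For $(iii)$ you take a different route. The paper also gets $(iii)$ by approximation. Each $S_{V_j}(\cdot,s)$ is strongly continuous by the classical theory, and a limit in $\CB$ that is uniform in $(t,s)$ preserves strong continuity. You instead prove $(iii)$ directly from the already established $(ii)$, written as $W_V(t',s)=W_V(t',t)W_V(t,s)$, combined with Theorem \ref{th:multilinear 1} on the interval $J$ between $t$ and $t'$. This gives
\[
\|W_V(t',s)-W_V(t,s)\|_{\CB}\le \|W_V(t,s)\|_{\CB}\sum_{n\ge1}\ep\bigl(A\|V\|_{L^2_t(J;\dH^{-1/2}_x)}\bigr)^n\ls \|V\|_{L^2_t(J;\dH^{-1/2}_x)}.
\]

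The two routes buy different things. The paper's route is shorter and obtains $(ii)$ and $(iii)$ in one stroke, but it yields only strong continuity and relies on the classical theory for both properties. Your route needs $(ii)$ first, but it proves something stronger. The operator $W_V(t,s)=S(-t)S_V(t,s)S(s)$ is continuous in operator norm in both variables, with a modulus controlled by $\|V\|_{L^2_t(J;\dH^{-1/2}_x)}$. So the only non-uniformity in the strong continuity of $S_V$ comes from the free flow. This is the finite-time version of the Cauchy argument the paper later uses for $W_V^\pm$ in Theorem \ref{th:construction of SV} $(v)$.

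On your remark about continuity in $s$: deducing it from the adjoint identity is fine at the level of $W$. Indeed $W_V(t,s)=W_{\ov V}(s,t)^*$, and norm continuity passes to adjoints. Applied to $S_V$ and mere strong continuity, however, the adjoint identity would give only weak continuity. The direct identity $W_V(t,s')=W_V(t,s)W_V(s,s')$ avoids this issue altogether.
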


\begin{proof}
	First, we have $(i)$ by the definition.
	When $V \in \CL(I)$, the propagator $S_V(t,s)$ for \eqref{eq:S} exists in a classical sense.
	More precisely, $S_V(t,s)$ satisfies
	\begin{equation}
		S_V(t,s)=S(t-s) - i\int_s^t S(t-\ta)V(\ta)S_V(\ta,s)d\ta.
	\end{equation}
	Hence, the standard argument implies $(ii)$ and $(iii)$.
	
	Let $V \in L^2_t(I;\dH^{-1/2}_x)$ be such that \eqref{eq:V condition}.
	Then, we can choose $V_j\in \CL(I)$ such that $V_j \to V$ in $L^2_t(I;\dH^{-1/2}_x)$.
	By Lemma \ref{lem:difference estimate}, we know 
	\begin{equation}
		\lim_{j\to \I} \sup_{t,s\in I} \No{S_{V_j}(t,s) -S_{V}(t,s)}_{\CB} = 0.
	\end{equation}
	Therefore, $(S_V(t,s))_{t,s\in I}$ also satisfies $(ii)$ and $(iii)$.
\end{proof}

Finally, we give a definition of $S_V(t,s)$ without any size restriction on $V$.
\begin{definition}[Definition of $S_V$ without a smallness assumption]\label{def:UV unconditional}
	Let $I\subset \R$ be an interval.
	Assume $V \in L^2_t(I;\dH^{-1/2}_x(\R))$.
	We define $S_V(t,t)=\Id_{L^2_x(\R)}$ for all $t\in I$.
	When $s<t$, we can choose a partition of $[s,t]$, $s= r_0 < r_1 < \cdots < r_N = t$, such that
	$$\sup_{n=0,\dots,N-1} A \|V\|_{L^2_t(r_n,r_{n+1};\dH^{-1/2}_x(\R))}\le \tw,$$
	where $A>0$ is the constant in Theorem \ref{th:multilinear 1}.
	Then, we define $S_V(t,s)$ by
	\begin{equation}\label{eq:UV definition unconditional}
		S_V(t,s) = S_V(t,r_{N-1})\cdots S_V(r_1,s).
	\end{equation}	
When $t<s$, we define
	\begin{equation}
		S_V(t,s) = S_{\ov{V}}(s,t)^*.
	\end{equation}
\end{definition}

\begin{remark}[Well-definedness of $S_V$ for large $V$]\label{rmk:well-defined}
	Definition \ref{def:UV unconditional} is well-defined.
	Let $s=r_0<r_1<\cdots<r_{N} = t$ and $s=r'_0< \cdots < r'_{N'} = t$ be two different partitions of $[s,t]$.
	Then, we take a common refinement
	$$s=r''_0 <r''_1 < \cdots < r''_{N''} = t.$$
	Therefore, by Lemma \ref{lem:basic UV}, we have 
	\begin{equation}
		S_V(t,r_{N-1})\cdots S_V(r_1,s)
		= S_V(t,r''_{N''-1})\cdots S_V(r''_1,s) = S_V(t,r'_{N'-1})\cdots S_V(r'_1,s).
	\end{equation}
\end{remark}

Finally, we prove Theorem \ref{th:construction of SV}.
\begin{proof}[Proof of Theorem \ref{th:construction of SV}]
	The properties $(i)$, $(ii)$, and $(iii)$ follow from Definition \ref{def:UV unconditional} and Lemma \ref{lem:basic UV}.
	Next we prove $(iv)$.
	It suffices to consider the case $s<t$ because we have the identity $S_V(t,s)=S_{\ov{V}}(s,t)^*$.
	Let $s=r_0 < \cdots < r_N =t$ be a partition as in Definition \ref{def:UV unconditional}.
	Then, by \eqref{eq:bound of UV}, we obtain
	\begin{equation}
		\|S_V(t,s)\|_{\CB} \le \prod_{j=0}^{N-1} \|S_V(r_{j+1},r_{j})\|_{\CB} \le 4^N.
	\end{equation}
	Since we can take $N$ such that
	$$N \ls 1+\|V\|_{L^2_t(I;\dH^{-1/2}_x)}^2,$$
	we obtain \eqref{eq:SV bound}.

Finally, we prove $(v)$. We only consider $t\to\I$ because we can deal with $t\to-\I$ in the same way.
Since $V$ belongs to $L^2_t(\R;\dH^{-1/2}_x)$, there exists a sufficiently large $T>0$ such that
\begin{equation}
	A \|V\|_{L^2_t(T,\I;\dH^{-1/2}_x)} \le \tw,
\end{equation}
where $A$ is the absolute constant in Theorem \ref{th:multilinear 1}.
Then, by Theorem \ref{th:multilinear 1} (see also Remark \ref{rmk:WV}) and Theorem \ref{th:construction of SV} $(iv)$, we have
\begin{align}
	\No{S(-t)S_V(t,0) - S(-s)S_V(s,0)}_{\CB}
	&\le \|S_V(T,0)\|_{\CB}  \No{S(-t)S_V(t,T) - S(-s)S_V(s,T)}_{\CB}  \\
	&\le \Ph\K{\|V\|_{L^2_t(\R;\dH^{-1/2}_x)}} \sum_{n\ge 1} \No{W_V^{(n)}(t,T) - W_V^{(n)}(s,T)}_{\CB}
\end{align}
for $T<s<t<\I$.
Note that, by the definition of $W_V^{(n)}$, we have
$$W_V^{(n)}(t,T) - W_V^{(n)}(s,T) = W^{(n)}_{V_s, V,\dots,V}(t,T),$$
where $V_s(\ta,x) := V(\ta,x)\II_{(s\le \ta)} $.
Hence, Theorem \ref{th:multilinear 1} implies
\begin{align}
	\sum_{n\ge 1} \No{W_V^{(n)}(t,T) - W_V^{(n)}(s,T)}_{\CB}
	&\le \sum_{n\ge 1} \frac{\ep A^n}{n^2} \|V\|_{L^2_t(s,t;\dH^{-1/2}_x)} \|V\|_{L^2_t(T,\I;\dH^{-1/2}_x)}^{n-1}\\
	&\ls \|V\|_{L^2_t(s,t;\dH^{-1/2}_x)} \to 0\quad \text{as} \quad s,t \to \I.
\end{align}
Therefore, by the completeness of the operator space, we find $W_V^+ \in \CB$ such that
\begin{equation}
	\|S(-t)S_V(t) - W_V^+\|_\CB \to 0\quad \text{as} \quad t \to \I.
\end{equation}
\end{proof}

\subsection{Standard Strichartz estimates for $S_V$}\label{subsec:standard Stri}
The well-known Strichartz estimates for the free Schr\"odinger equation on the line are
\begin{align}
	&\|S(t)\phi\|_{\CX(\R)}  \ls \|\phi\|_{L^2_x(\R)}, \label{eq:free Stri} \\
	&\No{\int_s^t S(t-\ta)f(\ta)d\ta}_{\CX(I)}  \ls \|f\|_{\CX'(I)},
\end{align}
where $I$ is an interval containing $s$ and
\begin{equation}\label{eq:definition of X}
	\begin{aligned}
		&\CX(I):=L^4_t(I;L^\I_x(\R)) \cap L^\I_t(I;L^2_x(\R)),\\
		&\CX'(I):=L^{4/3}_t(I;L^1_x(\R)) + L^1_t(I;L^2_x(\R)).
	\end{aligned}
\end{equation}

We give an analogue for $S_V$.
\begin{lemma}[Generalized Strichartz estimates]\label{lem:standard Stri}
	Let $I$ be an interval and $s\in I$.
	Assume that $V$ satisfies
	$$A\|V\|_{L^2_t(I;\dH^{-1/2}_x(\R))}\le \tw,$$
	where $A>0$ is the constant in Theorem \ref{th:multilinear 1}.
	Then,
	\begin{align}
		&\|S_V(t,s)\phi\|_{\CX(I)} \le C_{\textup{Stri}} \|\phi\|_{L^2_x(\R)}, \label{eq:Stri 1}\\
		&\No{\int_s^t S_V(t,\ta)f(\ta)d\ta}_{\CX(I)} \le C_{\textup{Stri}} \|f\|_{\CX'(I)}, \label{eq:Stri 2}
	\end{align}
	where $C_{\textup{Stri}}>0$ is an absolute constant.
\end{lemma}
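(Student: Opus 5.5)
The idea is to write $S_V(t,s)=S(t)W_V(t,s)S(-s)$ with $W_V(t,s)=\sum_{n\ge0}W_V^{(n)}(t,s)$, as in Remark \ref{rmk:WV}, and reduce everything to free Strichartz estimates via the multilinear bound of Theorem \ref{th:multilinear 1}. By density (Lemma \ref{lem:difference estimate}, together with the fact that the constant will be absolute) it suffices to treat $V\in\CL(I)$. The $L^\I_tL^2_x$ part of \eqref{eq:Stri 1} is immediate from \eqref{eq:bound of UV}, since $\delta_0\le 1/2$ gives $\sup\|S_V\|_\CB\le 4$. The real content is the $L^4_tL^\I_x$ bound. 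For each $n$, I will write
\[
S(t)W^{(n)}_V(t,s)S(-s)\phi=(-i)\int_s^t S(t-t_1)\,V(t_1)\,S(t_1)W^{(n-1)}_V(t_1,s)S(-s)\phi\,dt_1 .
\]
This recursive structure is awkward because $V$ is rough, so I prefer to avoid Duhamel altogether. Instead I will use a duality/$TT^*$-type argument built on the non-retarded structure.

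The key observation is the following. Let $\psi\in L^{4/3}_tL^1_x(I)$, and set $\Psi:=\int_I S(-t)\psi(t)\,dt$. By the dual free Strichartz estimate, $\|\Psi\|_{L^2}\ls\|\psi\|_{L^{4/3}L^1}$. We need to bound
\[
\int_I\lg \psi(t)\,|\,S(t)W_V^{(n)}(t,s)S(-s)\phi\rg\,dt=\lg \Psi_{\text{ret}}\,|\,\ldots\rg ,
\]
where the time $t$ plays the role of an extra ordered variable $t>t_1>\cdots>t_n>s$. I will therefore encode the pairing as an $(n+1)$-fold ordered integral of the form appearing in $W^{(n+1)}$, with the first factor $S(-t)\psi(t)$ in place of $S[V_0](t)$. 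Concretely, I will re-run the proof of Theorem \ref{th:multilinear 1} with $V_0$ replaced by a general "first slot" operator family $T_0(t)$ satisfying only the non-retarded bound $\|\int_J T_0(t)\,dt\|\ls \|\psi\|_{L^{4/3}_t(J;L^1_x)}$ for subintervals $J$ (this holds as a rank-one-type operator bound via the dual Strichartz estimate). The decomposition of Lemma \ref{lem:equal subdivision} uses only splitting of simplices. The induction needs the $n=1$ and $n=2$ base cases for the mixed slot. The case $n=1$ is exactly free Strichartz. For the case $n=2$, i.e. $\int\!\!\int_{t>t_1}S(-t)\psi(t)\,S[V](t_1)\,dt\,dt_1$, I will use the Christ--Kiselev lemma. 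This is legitimate here because the $\psi$-norm $L^{4/3}_t$ has exponent $<2$, so one side has strictly better time integrability: the $L^{4/3}_t$ versus $L^2_t$ gap is exactly what Christ--Kiselev needs, unlike the pure $V$--$V$ case. The partition step then uses equal-norm partitions for $\psi$ (in $L^{4/3}$, giving $2^{-3m/4}$ decay) and H\"older in $j$ as before. This yields
\[
\Big|\int_I\lg\psi\,|\,S(t)W^{(n)}_V(t,s)S(-s)\phi\rg dt\Big|\le \frac{C A^n}{n^2}\,\|V\|^n\,\|\psi\|_{L^{4/3}L^1}\|\phi\|_{L^2},
\]
and summing over $n$ with $A\|V\|\le1/2$ gives \eqref{eq:Stri 1}.

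For \eqref{eq:Stri 2}, I will similarly write $\int_s^tS_V(t,\tau)f(\tau)\,d\tau=\sum_n\int_s^t S(t)W_V^{(n)}(t,\tau)S(-\tau)f(\tau)\,d\tau$. Testing against $\psi$ gives an ordered integral with special slots at both ends, namely $\psi$ at the top and $f$ at the bottom. The same induction with two special slots applies, with base cases given by the free inhomogeneous Strichartz estimate (Christ--Kiselev again, since the exponent pairs are non-endpoint) and the mixed estimates above. The $L^1_tL^2_x$ part of $f$ is easier: by Minkowski and \eqref{eq:Stri 1} it reduces to $\int\|S_V(\cdot,\tau)f(\tau)\|_{\CX}\,d\tau$.

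The main obstacle is the mixed second-order base case and making sure the induction in Theorem \ref{th:multilinear 1} tolerates a slot measured in $L^{4/3}_tL^1_x$ rather than $L^2_t\dH^{-1/2}_x$. If Christ--Kiselev fails to close directly, I would instead bound the rough side by $\|W_V^{(n-1)}\|$ uniformly in $t_1$ and use a Christ--Kiselev argument on the operator-valued function $t_1\mapsto S[V](t_1)W^{(n-1)}_V(t_1,s)$, exploiting $4/3<2$.
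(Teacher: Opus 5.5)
Your argument is correct, but it takes a considerably longer road than the paper. The paper never re-runs the dyadic induction. For each $n$ it applies the Christ--Kiselev lemma directly to the linear map $V_1\mapsto S(t)W^{(n)}_{V_1,V_2,\dots,V_n}(t,s)S(-s)\phi$, viewed as an operator $L^2_t\dH^{-1/2}_x\to L^4_tL^\I_x$. The non-retarded version of this map, in which the $t_1$-integral runs over all of $[s,T]$ independently of $t$, is $S(t)W^{(n)}_{V_1,\dots,V_n}(T,s)S(-s)\phi$. Free Strichartz and Theorem \ref{th:multilinear 1} bound it by $C\ep A^n n^{-2}\prod_j\|V_j\|\,\|\phi\|_{L^2}$. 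Since $2<4$, Christ--Kiselev transfers this bound to the retarded operator, and summing over $n$ with $A\|V\|\le 1/2$ gives \eqref{eq:Stri 1}.

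This is exactly your mixed second-order base case in dual form ($L^{4/3}_t$ against $L^2_t$). The only difference is that the non-retarded bound is taken from Theorem \ref{th:multilinear 1} for the whole $n$-fold term, not just for one potential. Once you see that, your induction with a $\psi$-slot is valid but superfluous. That induction needs equal-norm partitions in $L^{4/3}_t$, H\"older in $j$, closure via the small $\ep$ carried by the pure bottom factor, and a density argument on $\psi$ to kill the finest-level terms.

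Likewise, for \eqref{eq:Stri 2} the paper avoids two special slots. It writes $\int_s^tS_V(t,\ta)f(\ta)\,d\ta=S_V(t,s)\int_s^tS_V(s,\ta)f(\ta)\,d\ta$. It bounds the non-retarded version $S_V(t,s)\int_I S_V(s,\ta)f(\ta)\,d\ta$ using \eqref{eq:Stri 1}, duality via $S_V(s,\ta)^*=S_{\ov V}(\ta,s)$, and \eqref{eq:Stri 1} for $\ov V$. It then applies Christ--Kiselev.

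Your route gives a self-contained multilinear estimate with $n^{-2}$ decay for the dualized objects, and it explains clearly why Christ--Kiselev is available here but not in Theorem \ref{th:multilinear 1}. The paper's route is much shorter, because all the hard work already sits in Theorem \ref{th:multilinear 1}.

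One caution about your fallback. You cannot get the non-retarded bound by pulling $\sup_{t_1}\|W^{(n-1)}_V(t_1,s)\|_{\CB}$ out of $\int S[V](t_1)W^{(n-1)}_V(t_1,s)\,dt_1$. The first-order bound relies on cancellation in $t_1$, and $S[V](t_1)$ is not a bounded operator for rough $V$. The correct input is Theorem \ref{th:multilinear 1} applied to the full $n$-fold term with arbitrary first slot.
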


\begin{proof}
	It suffices to prove \eqref{eq:Stri 1} because the standard duality argument and the Christ--Kiselev lemma imply \eqref{eq:Stri 2}.
	Let $J:= [s,\I) \cap I$. We only prove
	\begin{align}
		\|S_V(t,s)\phi\|_{\CX(J)} \le C_{\textup{Stri}} \|\phi\|_{L^2_x(\R)}
	\end{align}
	because we can see
	\begin{align}
		\|S_V(t,s)\phi\|_{\CX(I \setminus J)} \le C_{\textup{Stri}} \|\phi\|_{L^2_x(\R)}
	\end{align}
	in a similar manner.
	First, the bound \eqref{eq:bound of UV} implies
	\begin{align}
		\|S_V(t,s)\phi\|_{C_t(I;L^2_x)}
		\le 4 \|\phi\|_{L^2_x}.
	\end{align}
	Next, we consider the $L^4_t L^\I_x$ norm. 
	By Definition \ref{def:UV} and \eqref{eq:free Stri}, 
	\begin{equation}
		\|S_V(t,s)\phi\|_{L^4_t(J;L^\I_x)}
		\le C\|\phi\|_{L^2_x} + \sum_{n \ge 1} \No{S(t)W_V^{(n)}(t,s)S(-s)\phi}_{L^4_t(J;L^\I_x)},
	\end{equation}
	where $C\ge 1$ is an absolute constant.
	Fix an arbitrary $T\in J$.
	Since
	\begin{align}
		\No{S(t)W_{V_1,\dots,V_n}^{(n)}(T,s)S(-s)\phi}_{L^4_t(s,T;L^\I_x)}
		&\le C \No{W_{V_1,\dots,V_n}^{(n)}(T,s)S(-s)\phi}_{L^2_x} \\
		&\le \frac{C \ep }{n^2} A^n \prod_{j=1}^n \|V_j\|_{L^2_t([s,T];\dH^{-1/2}_x)} \|\phi\|_{L^2_x}
	\end{align}
	with an absolute constant $C\ge 1$, 
	the Christ--Kiselev lemma (see \cite{Christ Kiselev 2001}) implies
	\begin{align}\label{eq:multi linear}
		\No{S(t)W_{V_1,\dots,V_n}^{(n)}(t,s)S(-s)\phi}_{L^4_t([s,T];L^\I_x)}
		\le \frac{C' A^n}{n^2} \prod_{j=1}^n \|V_j\|_{L^2_t([s,T];\dH^{-1/2}_x(\R))} \|\phi\|_{L^2_x},
	\end{align}
	where $C'\ge 1$ is an absolute constant.
	Since $T\in J$ was arbitrary, we obtain
	\begin{align}\label{eq:multi linear infty}
		\No{S(t)W_{V_1,\dots,V_n}^{(n)}(t,s)S(-s)\phi}_{L^4_t(J;L^\I_x)}
		\le \frac{C' A^n}{n^2} \prod_{j=1}^n \|V_j\|_{L^2_t(J;\dH^{-1/2}_x(\R))} \|\phi\|_{L^2_x}.
	\end{align}
	Choosing $V_1 =\cdots =  V_n = V$, we obtain
	\begin{equation}\label{eq:V}
		\|S_V(t,s)\phi\|_{L^4_t(J;L^\I_x)}
		\le C\|\phi\|_{L^2_x} + C' \sum_{n \ge 1} A^n \|V\|_{L^2_t(J;\dH^{-1/2}_x)}^n \|\phi\|_{L^2_x}
		\le (C+2C') \|\phi\|_{L^2_x}.
\end{equation}
Hence, we obtain \eqref{eq:Stri 1} by setting $C_{\text{Stri}} := C+2C'$.
\end{proof}

By a similar argument, we can prove the following difference estimate.
\begin{lemma}[Difference estimate]\label{lem:difference SV}
	Let $I$ be an interval and $s\in I$.
	Assume that $V$ and $V'$ satisfy
	$$r:=\max\K{A\|V\|_{L^2_t(I;\dH^{-1/2}_x(\R))}, A\|V'\|_{L^2_t(I;\dH^{-1/2}_x(\R))}}\le \frac{1}{2},$$
	where $A>0$ is the constant in Theorem \ref{th:multilinear 1}.
	Then, we have
	\begin{align}
		&\|S_V(t,s)\phi - S_{V'}(t,s) \phi\|_{\CX(I)} \le C_{\textup{Stri}}' \|V-V'\|_{L^2_t(I;\dH^{-1/2}_x(\R))} \|\phi\|_{L^2_x(\R)}, \label{eq:Stri difference 1}\\
		&\No{\int_s^t S_V(t,\ta)f(\ta)d\ta - \int_s^t S_{V'}(t,\ta)f(\ta)d\ta}_{\CX(I)} \\
		&\qquad \qquad \qquad \qquad \qquad \qquad \qquad\le C_{\textup{Stri}}' \|V-V'\|_{L^2_t(I;\dH^{-1/2}_x(\R))} \|f\|_{\CX'(I)}, \label{eq:Stri difference 2}
	\end{align}
	where $C_{\textup{Stri}}'>0$ is an absolute constant.
\end{lemma}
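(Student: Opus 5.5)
We argue exactly as in the proof of Lemma \ref{lem:standard Stri}, now applied to the difference of the Dyson series. By Definition \ref{def:UV}, for $t\ge s$ (the case $t<s$ is handled symmetrically, or via $S_V(t,s)=S_{\ov V}(s,t)^*$),
\begin{equation}
	S_V(t,s)\phi-S_{V'}(t,s)\phi=\sum_{n\ge1} S(t)\Dk{W^{(n)}_V(t,s)-W^{(n)}_{V'}(t,s)}S(-s)\phi .
\end{equation}
Expanding the difference by multilinearity gives the telescoping identity
\begin{equation}
	W^{(n)}_V-W^{(n)}_{V'}=\sum_{k=1}^n W^{(n)}_{V',\dots,V',V-V',V,\dots,V},
\end{equation}
with $V-V'$ in the $k$-th slot. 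This is the same decomposition used in Lemma \ref{lem:difference estimate}. Each term is a multilinear operator of exactly the type treated in Lemma \ref{lem:standard Stri}.

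For \eqref{eq:Stri difference 1}, the $L^\I_tL^2_x$ part is already Lemma \ref{lem:difference estimate}, since $(1-r)^{-1}\le2$. For the $L^4_tL^\I_x$ part, we use the multilinear bound \eqref{eq:multi linear infty} from the proof of Lemma \ref{lem:standard Stri}. That bound holds for arbitrary $V_1,\dots,V_n$: it was obtained from Theorem \ref{th:multilinear 1}, the free Strichartz estimate \eqref{eq:free Stri}, and the Christ--Kiselev lemma. Applying it to each of the $n$ terms yields
\begin{equation}
	\No{S(t)\Dk{W^{(n)}_V-W^{(n)}_{V'}}(t,s)S(-s)\phi}_{L^4_t(J;L^\I_x)}\le \frac{C'A^n}{n}\|V-V'\|_{L^2_t(I;\dH^{-1/2}_x)}\Big(\frac{r}{A}\Big)^{n-1}\|\phi\|_{L^2_x}.
\end{equation}
Summing over $n\ge1$ with $r\le 1/2$ gives a geometric series bounded by $2C'A\|V-V'\|\,\|\phi\|$. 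Here $A$ is absolute, so the resulting constant is absolute.

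For \eqref{eq:Stri difference 2}, we first handle the pointwise-in-$t$ bound. Writing $\int_s^t (S_V(t,\ta)-S_{V'}(t,\ta))f(\ta)d\ta$, we use the duality and Christ--Kiselev argument that gave \eqref{eq:Stri 2} from \eqref{eq:Stri 1}. Concretely, the non-retarded operator $\int_I D(t,\ta)f(\ta)d\ta$ with $D:=S_V-S_{V'}$ factors through $L^2_x$. We write $D(t,\ta)=D(t,s)S_{V}(s,\ta)+S_{V'}(t,s)D(s,\ta)$, which follows from the evolution property (Lemma \ref{lem:basic UV}). Each piece is then a composition of an operator $L^2\to\CX(I)$ with the adjoint of an estimate $\CX(I)\to L^2$ applied to $\CX'(I)$. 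The difference factor is controlled by \eqref{eq:Stri difference 1}, or by its adjoint version for $\ov V,\ov{V'}$, which carries the same norms. The remaining factor is controlled by \eqref{eq:Stri 1}. The Christ--Kiselev lemma then restores the truncation $\ta<t$; it applies because $4/3<4$ and $1<\I$ in the relevant exponent pairs.

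The main obstacle is the bookkeeping in this last step. The cases involving the $L^1_tL^2_x$ component or the $L^\I_tL^2_x$ target lie at the endpoint of Christ--Kiselev. These must be handled directly by Minkowski's inequality together with the uniform operator bound of Lemma \ref{lem:difference estimate}, and Christ--Kiselev is used only for the $L^{4/3}_tL^1_x\to L^4_tL^\I_x$ pairing.
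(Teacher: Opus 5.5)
Your architecture matches the paper's. For \eqref{eq:Stri difference 1} you use the same telescoping of the Dyson series combined with \eqref{eq:multi linear}. For \eqref{eq:Stri difference 2} you use the same splitting $D(t,\ta)=D(t,s)S_V(s,\ta)+S_{V'}(t,s)D(s,\ta)$, factorization through $L^2_x$ by duality, and Christ--Kiselev. The problem is your last paragraph, the one you flag as the main obstacle: as written it is wrong, and it leaves one pairing unproved.

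\textbf{Christ--Kiselev endpoint.} None of the pairings sits at an endpoint of Christ--Kiselev. The lemma only needs the time exponents to satisfy $p<q$, and $1<4$, $1<\I$, $4/3<4$, $4/3<\I$ all hold.

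\textbf{The pairing $L^{4/3}_tL^1_x\to L^\I_tL^2_x$.} This pairing cannot be done by Minkowski plus the uniform operator bound of Lemma \ref{lem:difference estimate}. Here $f(\ta)$ lies only in $L^1_x$, so no $L^2_x$ operator norm controls $\|D(t,\ta)f(\ta)\|_{L^2_x}$. It has to go through the same factorization you described in your previous paragraph. Fix $t$ and apply to $\II_{[s,t]}f$ the duals of \eqref{eq:Stri 1} and of \eqref{eq:Stri difference 1}, both taken with $\ov V,\ov{V'}$. This puts $\int_s^t S_V(s,\ta)f(\ta)d\ta$ and $\int_s^t D(s,\ta)f(\ta)d\ta$ in $L^2_x$ with the right bounds. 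After that, $D(t,s)$ and $S_{V'}(t,s)$ act on $L^2_x$. The paper simply lets Christ--Kiselev treat both components of $\CX(J)$ at once for the $L^{4/3}_tL^1_x$ part.

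\textbf{The pairing $L^1_tL^2_x\to L^4_tL^\I_x$.} Here too the uniform operator bound is insufficient. The paper uses Minkowski in $\ta$ together with \eqref{eq:Stri difference 1} at initial time $\ta$, which controls the full $\CX(J)$ norm.

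These corrections use only tools already in your proposal, and with them your argument coincides with the paper's.
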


\begin{proof}
Let $J:= [s,\I) \cap I$. We only estimate the $\|\cdot\|_{\CX(J)}$ norm because we can deal with the $\|\cdot\|_{\CX(I\setminus J)}$ norm in a similar manner.
By Lemma \ref{lem:difference estimate}, we have
\begin{align}
	\|S_V(t,s)\phi - S_{V'}(t,s) \phi\|_{C_t(J;L^2_x)}
	&\le \sup_{t,s\in I} \|S_V(t,s) - S_{V'}(t,s)\|_\CB \|\phi\|_{L^2_x} \\
	&\le 2\ep A \|V-V'\|_{L^2_t(I;\dH^{-1/2}_x)} \|\phi\|_{L^2_x}.
\end{align}
Next, we consider the $L^4_t L^\I_x$ norm. 
By Definition \ref{def:UV},
\begin{align}
	\|S_V(t,s)\phi - S_{V'}(t,s)\phi\|_{L^4_t(J;L^\I_x)}
	&\le \sum_{n \ge 1} \No{S(t)\Dk{W_{V,\dots,V}^{(n)}(t,s) - W_{V',\dots,V'}^{(n)}(t,s)}S(-s)\phi}_{L^4_t(J;L^\I_x)} \\
	&\le \sum_{n \ge 1} \No{S(t) W_{V-V',V,\dots,V}^{(n)}(t,s)S(-s)\phi}_{L^4_t(J;L^\I_x)} + \cdots\\
	&\qquad \cdots + \sum_{n \ge 1} \No{S(t) W_{V',\dots,V',V-V'}^{(n)}(t,s)S(-s)\phi}_{L^4_t(J;L^\I_x)}.
\end{align}
By the estimate \eqref{eq:multi linear} obtained in the proof of Lemma \ref{lem:standard Stri}, we obtain
\begin{align}
	\|S_V(t,s)\phi - S_{V'}(t,s)\phi\|_{L^4_t(J;L^\I_x)}
	&\le \sum_{n \ge 1} \frac{CA r^{n-1}}{n} \|V-V'\|_{L^2_t(J;\dH^{-1/2}_x)}  \|\phi\|_{L^2_x}\\
	&\le 2CA \|V-V'\|_{L^2_t(J;\dH^{-1/2}_x)} \|\phi\|_{L^2_x}.
\end{align}
Thus, we obtain \eqref{eq:Stri difference 1} by setting $C_{\text{Stri}}'=2CA$.

Now, we prove \eqref{eq:Stri difference 2}.
By \eqref{eq:Stri difference 1}, which has already been proved, we have
\begin{align}
	\No{\int_s^t S_V(t,\ta)f(\ta)d\ta - \int_s^t S_{V'}(t,\ta)f(\ta)d\ta}_{\CX(J)}
	&\le \int_J \No{S_V(t,\ta)f(\ta) - S_{V'}(t,\ta)f(\ta)}_{\CX(J)} d\ta \\
	&\le C_{\text{Stri}}'\int_J \|V-V'\|_{L^2_t(J;\dH^{-1/2}_x)} \|f(\ta)\|_{L^2_x} d\ta \\
	&= C_{\text{Stri}}' \|V-V'\|_{L^2_t(J;\dH^{-1/2}_x)} \|f\|_{L^1_t(J;L^2_x)}.
\end{align}
Second, by Theorem \ref{th:construction of SV},
\begin{multline}
	\No{\int_s^t S_V(t,\ta)f(\ta)d\ta - \int_s^t S_{V'}(t,\ta)f(\ta)d\ta}_{\CX(J)}
	\le \No{\K{S_V(t,s) - S_{V'}(t,s)}\int_s^t S_V(s,\ta)f(\ta)d\ta }_{\CX(J)} \\
	 + \No{S_{V'}(t,s)\int_s^t \K{S_V(s,\ta) - S_{V'}(s,\ta)}f(\ta)d\ta}_{\CX(J)} =:\SA + \SB.
\end{multline}
Fix an arbitrary $T\in J$.
Then, by \eqref{eq:Stri difference 1}, we have
\begin{multline}
	\No{\K{S_V(t,s) - S_{V'}(t,s)}\int_s^T S_V(s,\ta)f(\ta)d\ta }_{\CX(J)} \\
	\le C_{\textup{Stri}}' \|V-V'\|_{L^2_t(J;\dH^{-1/2}_x)} \No{\int_s^T S_V(s,\ta)f(\ta)d\ta }_{L^2_x} \\
	\le C_{\textup{Stri}} C_{\textup{Stri}}' \|V-V'\|_{L^2_t(J;\dH^{-1/2}_x)} \|f\|_{L^{4/3}_t(J;L^1_x)}.
\end{multline}
Hence, by the Christ--Kiselev lemma, we have
\begin{equation}
	\SA \le C \|V-V'\|_{L^2_t(J;\dH^{-1/2}_x)} \|f\|_{L^{4/3}_t(J;L^1_x)},
\end{equation}
where $C>0$ is an absolute constant.
Therefore, it remains to estimate $\SB$.
By the Christ--Kiselev lemma, it suffices to prove
\begin{equation}\label{eq:aim}
	\begin{aligned}
		\wt{\SB}&:=\No{S_{V'}(t,s)\int_s^T \K{S_V(s,\ta) - S_{V'}(s,\ta)}f(\ta)d\ta}_{\CX(J)} \\
		&\le C \|V-V'\|_{L^2_t(J;\dH^{-1/2}_x)} \|f\|_{L^{4/3}_t(J;L^1_x)}
	\end{aligned}
\end{equation}
for arbitrary $T \in J$, where $C>0$ is an absolute constant.
By \eqref{eq:Stri 1}, we have
\begin{align}\label{eq:reduce 1}
	\wt{\SB} \le C_{\textup{Stri}} \No{\int_s^T \K{S_V(s,\ta) - S_{V'}(s,\ta)}f(\ta)d\ta}_{L^2_x}.
\end{align}
Let $\phi\in \CS(\R)$. Then, by \eqref{eq:Stri difference 1} and $S_V(t,s)^*=S_{\ov{V}}(s,t)$, we have
\begin{multline}
	\int_\R \ov{\phi(x)} \Dk{\int_s^T \K{S_V(s,\ta) - S_{V'}(s,\ta)}f(\ta)d\ta}(x) dx 
	= \int_s^T \int_\R \ov{\K{S_{\ov{V}}(\ta,s) - S_{\ov{V'}}(\ta,s)} \phi(x) }f(\ta,x) dxd\ta \\
	\le \No{\K{S_{\ov{V}}(\ta,s) - S_{\ov{V'}}(\ta,s)} \phi}_{L^{4}_t(J;L^\I_x)} \|f\|_{L^{4/3}_t(J;L^1_x)} \\
	\le C \|V-V'\|_{L^2_t(J;\dH^{-1/2}_x)} \|\phi\|_{L^2_x} \|f\|_{L^{4/3}_t(J;L^1_x)},
\end{multline}
where $C>0$ is an absolute constant.
Hence, by duality,
\begin{align}\label{eq:reduce 2}
	\No{\int_s^T \K{S_V(s,\ta) - S_{V'}(s,\ta)}f(\ta)d\ta}_{L^2_x}\le C \|V-V'\|_{L^2_t(J;\dH^{-1/2}_x)} \|f\|_{L^{4/3}_t(J;L^1_x)}.
\end{align}
Combining \eqref{eq:reduce 1} and \eqref{eq:reduce 2}, we obtain \eqref{eq:aim}.
\end{proof}

\subsection{Ozawa--Tsutsumi type Strichartz estimates for $S_V$}\label{subsec:Strichartz}
Recall the well-known bilinear Strichartz estimate proved by Ozawa and Tsutsumi in \cite{Ozawa Tsutsumi 1998}.
\begin{theorem}[Ozawa--Tsutsumi]\label{th:Ozawa Tsutsumi}
	For any $\phi,\phi'\in L^2_x(\R)$, we have
	\begin{equation}\label{eq:Ozawa Tsutsumi}
		\No{S(t)\phi \cdot \ov{S(t)\phi'}}_{L^2_t(\R;\dot{H}^{1/2}_x)} \ls \|\phi\|_{L^2_x(\R)} \|\phi'\|_{L^2_x(\R)}.
	\end{equation}
\end{theorem}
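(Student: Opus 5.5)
The plan is to pass to the Fourier side and reduce the bilinear estimate to a weighted $L^2$ bound on the space-time paraboloid, which is the original Ozawa--Tsutsumi argument. By density it suffices to take $\phi,\phi'\in\CS(\R)$. Set $F(t,x):=S(t)\phi\cdot\ov{S(t)\phi'}$. Computing its space-time Fourier transform gives, up to a constant,
\begin{equation}
	\wt{F}(\om,\xi) = \int_\R \wh{\phi}(\et+\xi)\ov{\wh{\phi'}(\et)}\,\de\K{\om+(\et+\xi)^2-\et^2}\,d\et .
\end{equation}
The delta function then removes the $\et$-integration. Indeed $(\et+\xi)^2-\et^2=2\et\xi+\xi^2$, so for $\xi\ne0$ the constraint fixes $\et=\et(\om,\xi)=-(\om+\xi^2)/(2\xi)$, and the Jacobian is $1/(2|\xi|)$. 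Hence
\begin{equation}
	\wt{F}(\om,\xi) = \frac{c}{|\xi|}\,\wh{\phi}(\et+\xi)\ov{\wh{\phi'}(\et)}\Big|_{\et=\et(\om,\xi)} .
\end{equation}

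Next I compute the norm by Plancherel in $(t,x)$:
\begin{equation}
	\|F\|_{L^2_t\dH^{1/2}_x}^2=\iint|\xi|\,|\wt F(\om,\xi)|^2\,d\om\, d\xi = c^2\iint \frac{1}{|\xi|}\,|\wh\phi(\et+\xi)|^2|\wh{\phi'}(\et)|^2\,d\om\, d\xi .
\end{equation}
For fixed $\xi$ I change variables $\om\mapsto\et$, with $d\om=2|\xi|\,d\et$. This cancels the factor $1/|\xi|$ exactly, and the integral becomes
\begin{equation}
	2c^2\iint|\wh\phi(\et+\xi)|^2|\wh{\phi'}(\et)|^2\,d\et\, d\xi=2c^2\|\phi\|_{L^2}^2\|\phi'\|_{L^2}^2 .
\end{equation}
So the estimate in fact holds as an identity with an explicit constant.

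The main point requiring care is making the delta-function computation rigorous and checking compatibility with the paper's definition of $\dH^{1/2}$ as a completion of $\CC$. I would handle the first by computing $\int F(t,x)\ov{\Psi(t,x)}\,dt\,dx$ against a test function $\Psi$ with $\wt\Psi$ supported away from $\xi=0$, and applying Fubini in $(\om,\xi,\et)$; the duality bound then yields the estimate. For the second, the set $\{\xi=0\}$ has measure zero, so the Fourier-side $L^2$ bound identifies $F$ with an element of $L^2_t\dH^{1/2}_x$. The bound is bilinear and continuous, so it extends from Schwartz data to all of $L^2$.
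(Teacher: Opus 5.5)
Your argument is correct, and it proves an identity: with the paper's unitary Fourier conventions your constant is $c=\tfrac12$, and $\|S(t)\phi\cdot\ov{S(t)\phi'}\|_{L^2_t(\R;\dH^{1/2}_x)}^2=\tfrac12\|\phi\|_{L^2}^2\|\phi'\|_{L^2}^2$.

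The paper gives no proof of this statement; it quotes Ozawa--Tsutsumi. What the paper does prove is the dual form of your computation. Lemma \ref{lem:first order term} bounds $\No{\int_I S[h](t)\,dt}_{\CB}$ by the Hilbert--Schmidt norm $\|\wt{h\II_I}(-\xi^2+\et^2,\xi-\et)\|_{L^2_{\xi,\et}}$. The pairing $\iint h\,S(t)\phi\,\ov{S(t)\phi'}\,dx\,dt=\lg \phi' | \int S[h](t)\,dt\;\phi\rg$ (the case $n=m=0$ in the proof of Lemma \ref{lem:multilinear Strichartz}) then turns that operator bound into your estimate, by duality with $L^2_t\dH^{-1/2}_x$.

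Both arguments rest on the same change of variables $(\xi_1,\xi_2)\mapsto(-(\xi_1^2-\xi_2^2),\,\xi_1-\xi_2)$, with Jacobian $2|\xi_1-\xi_2|$. Your primal version shows the estimate is an exact identity. The operator formulation is the one the paper actually needs, because $S[h]$ can then be sandwiched between Dyson terms $W^{(n)}$. That is how the estimate is transferred to $S_V$ in Lemmas \ref{lem:multilinear Strichartz} and \ref{lem:key}.

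If you want to avoid the delta function entirely, fix $\xi\neq0$ and set $a_\xi(\et):=\wh\phi(\et+\xi)\ov{\wh{\phi'}(\et)}$. Then $\wh{F(t)}(\xi)=e^{-it\xi^2}\,\wh{a_\xi}(2t\xi)$, and Plancherel in $t$ gives $\int_\R|\wh{F(t)}(\xi)|^2\,dt=\frac{1}{2|\xi|}\|a_\xi\|_{L^2}^2$. Multiplying by $|\xi|$ and integrating in $\xi$ (Tonelli) yields the identity directly. This also makes the identification with $\dH^{1/2}$, realized on the Fourier side as $L^2(|\xi|\,d\xi)$, immediate, even though $\wh{F(t)}(0)\neq0$ in general.
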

In the sequel, we generalize \eqref{eq:Ozawa Tsutsumi} to the perturbed flow $S_V$.
Our first observation is as follows.
\begin{lemma}\label{lem:multilinear Strichartz}
	There is an absolute constant $B>0$ such that the following multilinear estimate holds.
	Let $I\subset \R$ and $s\in I$.
	For any $n,m\ge 0$, $V_1,\dots,V_n,V_1',\dots,V_m' \in \CL(I)$, and $\phi,\phi'\in L^2_x(\R)$,
	we have
	\begin{equation}\label{eq:mutlilinear Strichartz 2}
		\begin{aligned}
			&\No{S(t)W_{V_1,\dots,V_n}^{(n)}(t,s)S(-s)\phi \cdot \ov{S(t)W_{V'_1,\dots,V'_m}^{(m)}(t,s)S(-s) \phi'}}_{L^2_t(I;\dH^{1/2}_x(\R))} \\
			&\quad \le B\lg n \rg^{1/2} \lg m\rg^{1/2} A^{n+m} \prod_{j=1}^n \|V_j\|_{L^2_t(I;\dH^{-1/2}_x(\R))}
			\prod_{k=1}^m \|V_k'\|_{L^2_t(I;\dH^{-1/2}_x(\R))}
			\|\phi\|_{L^2_x(\R)} \|\phi'\|_{L^2_x(\R)},
		\end{aligned}
	\end{equation}
	where $A$ is the constant in Theorem \ref{th:multilinear 1} and $\lg x \rg := (1+|x|^2)^{1/2}$.
\end{lemma}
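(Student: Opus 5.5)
We will argue by strong induction on $n+m$, following the scheme of the proof of Theorem \ref{th:multilinear 1}. By density we first reduce to $V_j,V'_k\in\CL(I)$, so that $\|V_j\|_{L^1_t(J;L^\I_x)}\ls_R |J|^{1/2}\|V_j\|_{L^2_t(J;\dH^{-1/2}_x)}$ on any subinterval $J$. We also assume $t\ge s$, i.e.\ $J=[s,\I)\cap I$; the other half is symmetric.

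The case $n=m=0$ is exactly Theorem \ref{th:Ozawa Tsutsumi}. The basic mechanism for removing the $t$-dependence of $W^{(n)}(t,s)$ is as follows. For a fixed endpoint $T$, the function $S(t)W^{(n)}(T,s)S(-s)\phi$ is a free solution. Its data are bounded in $L^2_x$ by $\ep A^n n^{-2}\prod_j\|V_j\|\,\|\phi\|$ via Theorem \ref{th:multilinear 1}. Hence \eqref{eq:Ozawa Tsutsumi} gives the desired bilinear bound when both factors have frozen upper endpoints.

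To unfreeze, take a partition $s=t^m_0<\dots<t^m_{2^m}=\sup J$ that equidistributes the quantity $\sum_j\|V_j\|^2+\sum_k\|V'_k\|^2$. On each piece $J_i=[t^m_i,t^m_{i+1}]$ we use the composition identity
\[
W^{(n)}(t,s)=\sum_{k=0}^{n}W^{(k)}_{V_1,\dots,V_k}(t,t^m_i)\,W^{(n-k)}_{V_{k+1},\dots,V_n}(t^m_i,s),
\]
which is the $c=t^m_i$ case of \eqref{eq:subdivision} rewritten, and we do the same for the primed factor. Since the $L^2_t$ norm squared is additive over the disjoint $J_i$, we must bound
\[
\sum_i\Big\|\sum_{k,k'}S(t)W^{(k)}(t,t^m_i)\psi_{i,k}\,\ov{S(t)W^{(k')}(t,t^m_i)\psi'_{i,k'}}\Big\|^2_{L^2_t(J_i;\dH^{1/2}_x)},
\]
where $\psi_{i,k}:=S(t^m_i)W^{(n-k)}(t^m_i,s)S(-s)\phi$. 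The norm $\|\psi_{i,k}\|_{L^2}$ is controlled by Theorem \ref{th:multilinear 1}.

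\emph{Case $k=k'=0$.} This term is purely free, and \eqref{eq:Ozawa Tsutsumi} on $J_i$ applies. Summing over $i$ is the delicate point, since the data $\psi_{i,0}$ depend on $i$. We would handle it as in Christ--Kiselev: organize the $t^m_i$ along the dyadic tree of Lemma \ref{lem:equal subdivision}. Then $\psi_{i,0}-\psi_{i',0}$ for a parent/child pair is a product of a short-interval $W$ and a long one. Each level $\ell$ contributes a factor $2^{-\ell/2}$ from the equidistributed norm, in the same way as in Step 2 of the proof of Theorem \ref{th:multilinear 1}. The Hölder-in-$j$ trick there then sums the remaining products.

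\emph{Case $k+k'\ge3$.} Here we bound crudely by $L^1_tL^\I_x$ of the $V$'s on $J_i$ together with Strichartz. The local smallness $|J_i|^{(k+k')/2}$ yields a positive power of $2^{-m}$ after summing in $i$, so these terms vanish as $m\to\I$, just like $\SA_m$.

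\emph{Case $1\le k+k'\le2$.} These are the remaining genuinely new terms, and we expect them to be the main obstacle. Here one needs a direct bilinear Ozawa--Tsutsumi bound for $S(t)W^{(1)}(t,c)S(-c)\psi\cdot\ov{S(t)\psi'}$ (and the analogous second-order ones) over $t\in[c,T]$, without loss. We would prove this by writing it in Fourier variables exactly as in the proof of Lemma \ref{lem:second order term}. We would represent $\II_{\{t>t_1\}}$ via the principal value of $1/\lm$, and then reduce to an $L^2$ estimate with the same dilation-group structure as Lemma \ref{lem:multi linear}. The extra weight $|\xi|^{1/2}$ from $\dH^{1/2}_x$ should be absorbed by the Ozawa--Tsutsumi Jacobian $|\xi_1-\xi_2|^{-1/2}$ of the change of variables $(\xi_1,\xi_2)\mapsto(\xi_1^2-\xi_2^2,\xi_1-\xi_2)$.

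Finally, the combinatorics produces the factor $\lg n\rg^{1/2}\lg m\rg^{1/2}$. Each factor is a sum over $k\le n$ of terms with weights $(n-k)^{-2}$ (from Theorem \ref{th:multilinear 1}) times inductive constants. Applying Cauchy--Schwarz in $k$ against these summable weights costs at most $\lg n\rg^{1/2}$. Choosing $B$ large relative to $\ep$ and the absolute constants closes the induction.
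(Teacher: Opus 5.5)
Your scheme does not close, and the real difficulty is not where you placed it.

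\textbf{The terms with $k+k'\ge1$ are harmless.} Write $S(t)W^{(k)}(t,t_i^m)S(-t_i^m)\psi$ as an integral over $t_1$ of free waves multiplied by $\II_{\{t>t_1\}}$. This cutoff depends on $t$ alone, so it cannot increase an $L^2_t\dH^{1/2}_x$ norm. Minkowski's inequality and Theorem~\ref{th:Ozawa Tsutsumi} then bound such a term by products of $\|V\|_{L^1_t(J_i;L^\I_x)}\ls_R|J_i|^{1/2}\|V\|_{L^2_t(J_i;\dH^{-1/2}_x)}$. So already for $k+k'=1$ the square sum over $i$ is $O(2^{-m})$. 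Moreover, the direct bound you want for $S(t)W^{(1)}(t,c)S(-c)\psi\cdot\ov{S(t)\psi'}$ is, with $c=s$, literally the case $(n,m)=(1,0)$ of the lemma.

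\textbf{The frozen term $k=k'=0$ carries all the difficulty, and the tree argument fails there.} Consider the $k=1$ part of a parent/child difference of $\psi_{i,0}$, namely $W^{(1)}_{V_1}(t^{\ell+1}_{2j+1},t^{\ell+1}_{2j})\,W^{(n-1)}_{V_2,\dots,V_n}(t^{\ell+1}_{2j},s)$. Only one factor is localized to a level-$(\ell+1)$ interval, and it supplies $2^{-(\ell+1)/2}$. But there are $2^\ell$ nodes and nothing else decays in $j$. After Ozawa--Tsutsumi and square-summation, each level therefore contributes $O(1)$, and the sum over $\ell<m$ is $O(m)$. The H\"older trick in Step~2 of the proof of Theorem~\ref{th:multilinear 1} works only because Lemma~\ref{lem:equal subdivision} places both factors on \emph{sibling} intervals and $n\ge3$. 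For $(n,m)=(1,0)$ your argument yields nothing at all. This is the endpoint $L^2_t\to L^2_t$ situation in which Christ--Kiselev fails, and one genuinely needs the cancellation of Lemma~\ref{lem:multi linear}.

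\textbf{The missing idea is duality,} which removes the need for any induction or partition. Pair the product with $h\in\CL(J)$ and restrict to $[s,T]$. The pairing equals $\lg S(-s)\phi'|\SA\,S(-s)\phi\rg$ with
\[
\SA=\int_s^TW^{(m)}_{V'_1,\dots,V'_m}(t,s)^*S[h](t)W^{(n)}_{V_1,\dots,V_n}(t,s)\,dt .
\]
Since $S[h](t)W^{(n)}_{V_1,\dots,V_n}(t,s)=i\pl_tW^{(n+1)}_{h,V_1,\dots,V_n}(t,s)$, $m$ integrations by parts give
\[
\SA=iW^{(n+m+1)}_{\ov{V'_m},\dots,\ov{V'_1},h,V_1,\dots,V_n}(T,s)+i\sum_{k=0}^{m-1}W^{(m-k)}_{V'_{k+1},\dots,V'_m}(T,s)^*\,W^{(n+1+k)}_{\ov{V'_k},\dots,\ov{V'_1},h,V_1,\dots,V_n}(T,s).
\]
Here $h$ is just one more potential in $L^2_t\dH^{-1/2}_x$. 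Theorem~\ref{th:multilinear 1} therefore bounds each of these $m+1$ terms by $\ls A^{n+m+1}\|h\|\prod_j\|V_j\|\prod_k\|V'_k\|$. Taking the supremum over $h$ gives the estimate with factor $m+1$. Exchanging the roles of the two factors gives $\min(n,m)+1\ls\lg n\rg^{1/2}\lg m\rg^{1/2}$. In this formulation the time-ordering problem you were fighting is exactly the one already solved in Theorem~\ref{th:multilinear 1}. For example, the case $(n,m)=(1,0)$ is just Lemma~\ref{lem:second order term} applied to $W^{(2)}_{h,V_1}(T,s)$.
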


\begin{proof}
	Let $J:=I\cap [s,\I)$.
	We only consider the $L^2_t(J;\dH^{1/2}_x(\R))$ norm because we can estimate $L^2_t(I\setminus J;\dH^{1/2}_x(\R))$ in a similar way.
	By a density argument, we can assume $\phi,\phi'\in C_c^\I(\R)$.
	By a symmetry argument, it suffices to prove
	\begin{equation}\label{eq:T}
		\begin{aligned}
			&\No{S(t)W_{V_1,\dots,V_n}^{(n)}(t,s)S(-s)\phi \cdot \ov{S(t)W_{V'_1,\dots,V'_m}^{(m)}(t,s)S(-s) \phi'}}_{L^2_t(s,T;\dH^{1/2}_x(\R))} \\
			&\quad \le B (1+m) A^{n+m} \prod_{j=1}^n \|V_j\|_{L^2_t(J;\dH^{-1/2}_x(\R))}
			\prod_{k=1}^m \|V_k'\|_{L^2_t(J;\dH^{-1/2}_x(\R))}
			\|\phi\|_{L^2_x(\R)} \|\phi'\|_{L^2_x(\R)}
		\end{aligned}
	\end{equation}
	for all $T\in J$.
	Fix an arbitrary $T\in J$ and $h\in \CL(J)$.
	Then, we have
	\begin{align}
		&\int_s^T \int_\R h(t,x) S(t)W_{V_1,\dots,V_n}^{(n)}(t,s)S(-s)\phi \cdot \ov{S(t)W_{V'_1,\dots,V'_m}^{(m)}(t,s) S(-s)\phi'} dx dt \\
		&\quad = \int_\R \ov{S(-s)\phi'(x)} \K{\int_s^T W_{V'_1,\dots,V'_m}^{(m)}(t,s)^* S[h](t) W_{V_1,\dots,V_n}^{(n)}(t,s) dt } S(-s)\phi(x) dx \\
		&\quad \le \|\phi\|_{L^2_x}\|\phi'\|_{L^2_x} \No{\int_s^T W_{V'_1,\dots,V'_m}^{(m)}(t,s)^* S[h](t) W_{V_1,\dots,V_n}^{(n)}(t,s) dt}_{\CB}=:\|\phi\|_{L^2_x}\|\phi'\|_{L^2_x}\|\SA_m\|_{\CB} .
	\end{align}
	Since
	\begin{equation}
		S[Q_1](t)W_{Q_2,\dots,Q_k}^{(k-1)}(t,s) = i\pl_t W_{Q_1,\dots,Q_k}^{(k)}(t,s), 
	\end{equation}
	integration by parts implies
	\begin{align}
		\SA_m
		= i W_{\ov{V_m'},\dots,\ov{V_1'},h,V_1,\dots,V_n}^{(n+m+1)}(T,s)+ 
		i \sum_{k=0}^{m-1} \K{W_{V_{k+1}',\dots,V_m'}^{(m-k)}(T,s)}^* W_{\ov{V_k'},\dots,\ov{V_1'},h,V_1,\dots,V_n}^{(n+1+k)}(T,s).
	\end{align}
	Therefore, by Theorem \ref{th:multilinear 1}, we have
	\begin{align}
		\|\SA_m\|_{\CB}
		&\le \frac{\ep A^{n+m+1}}{(n+m+1)^2}
		\prod_{k=1}^m \|V_{k}'\|_{L^2_t(J;\dH^{-1/2}_x)} 
		\|h\|_{L^2_t(J;\dH^{-1/2}_x)} 
		\prod_{j=1}^n \|V_j\|_{L^2_t(J;\dH^{-1/2}_x)} \\
		&\qquad + \sum_{k=0}^{m-1} \No{W_{\ov{V_m'},\dots,\ov{V_{k+1}'}}^{(m-k)}(T)}_{\CB}
		\No{W_{\ov{V_k'},\dots,\ov{V_1'},h,V_1,\dots,V_n}^{(n+1+k)}(T)}_{\CB} \\
		&\le \ep A^{n+m+1}
		\prod_{k=1}^m \|V_{k}'\|_{L^2_t(J;\dH^{-1/2}_x)} 
		\|h\|_{L^2_t(J;\dH^{-1/2}_x)} 
		\prod_{j=1}^n \|V_j\|_{L^2_t(J;\dH^{-1/2}_x)} \\
		&\qquad + C \ep^2 m A^{n+m+1} 
		\prod_{k=1}^m \|V_{k}'\|_{L^2_t(J;\dH^{-1/2}_x)} 
		\|h\|_{L^2_t(J;\dH^{-1/2}_x)} 
		\prod_{j=1}^n \|V_j\|_{L^2_t(J;\dH^{-1/2}_x)} ,
	\end{align}
	where $C>0$ is an absolute constant.
	Taking $B =(\ep + C\ep^2) A$, we conclude that
	\begin{equation}
		\|\SA_m\|_{\CB}
		\le B(m+1)A^{n+m} \prod_{k=1}^m \|V_{k}'\|_{L^2_t(J;\dH^{-1/2}_x)} 
		\|h\|_{L^2_t(J;\dH^{-1/2}_x)} 
		\prod_{j=1}^n \|V_j\|_{L^2_t(J;\dH^{-1/2}_x)} ,
	\end{equation}
	and hence \eqref{eq:T}.
\end{proof}

We now obtain one of the key estimates in this paper.
\begin{lemma}[Ozawa--Tsutsumi type bilinear Strichartz estimate]\label{lem:key}
Let $I\subset \R$ be an interval.
	Assume that $V$ and $V'$ satisfy
	$$r:=\max\K{A\|V\|_{L^2_t(I;\dH^{-1/2}_x(\R))}, A\|V'\|_{L^2_t(I;\dH^{-1/2}_x(\R))}}\le \frac{1}{2},$$
	where $A$ is the constant in Theorem \ref{th:multilinear 1}.
	Then, for any $s,s'\in I$,
	\begin{align}
		&\No{S_V(t,s)\phi \cdot \ov{S_{V'}(t,s')\phi'}}_{L^2_t(I;\dH^{1/2}_x(\R))}
		\le C \|\phi\|_{L^2_x(\R)} \|\phi'\|_{L^2_x(\R)}, \label{eq:a priori}\\
		&\No{S_V(t,s)\phi \cdot \ov{S_V(t,s')\phi'} - S_{V'}(t,s)\phi \cdot \ov{S_{V'}(t,s')\phi'}}_{L^2_t(I;\dH^{1/2}_x(\R))} \\
		&\qquad \qquad \qquad \qquad \qquad \qquad \qquad \qquad
		\le C\|V-V'\|_{L^2_t (I;\dH^{-1/2}_x(\R))}\|\phi\|_{L^2_x} \|\phi'\|_{L^2_x(\R)},\label{eq:difference}
	\end{align}
	where $C>0$ is an absolute constant.
\end{lemma}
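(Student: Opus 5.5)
The plan is to expand both propagators in their Dyson series and sum the multilinear bounds of Lemma \ref{lem:multilinear Strichartz}. The difficulty is that $s\neq s'$ is allowed, whereas Lemma \ref{lem:multilinear Strichartz} uses a common base time $s$. We first reduce to a common base point. By the evolution property (Lemma \ref{lem:basic UV} $(ii)$), $S_{V'}(t,s')\phi' = S_{V'}(t,s)\psi'$ with $\psi':=S_{V'}(s,s')\phi'$. By \eqref{eq:bound of UV} with $\de_0\le 1/2$, we have $\|\psi'\|_{L^2_x}\le 4\|\phi'\|_{L^2_x}$. So it suffices to treat $s=s'$, provided we also track the $V$-dependence of $\psi'$ in the difference estimate.

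\emph{Proof of \eqref{eq:a priori}.} By Definition \ref{def:UV}, $S_V(t,s)\phi=\sum_{n\ge0}S(t)W_V^{(n)}(t,s)S(-s)\phi$, and similarly for $S_{V'}(t,s)\psi'$. Expanding the product and applying Lemma \ref{lem:multilinear Strichartz} term by term gives the bound
\begin{equation}
\sum_{n,m\ge0}B\lg n\rg^{1/2}\lg m\rg^{1/2}r^{n+m}\|\phi\|_{L^2_x}\|\psi'\|_{L^2_x}\le B\Big(\sum_{n\ge0}\lg n\rg^{1/2}2^{-n}\Big)^2\cdot 4\|\phi\|_{L^2_x}\|\phi'\|_{L^2_x}.
\end{equation}
The series converges absolutely in $L^2_t(I;\dH^{1/2}_x)$. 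We first take $V,V'\in\CL(I)$ and Schwartz data, and then pass to general $V,V'$ by density, using Lemma \ref{lem:difference estimate} to identify the limit of the products (pointwise in $t$ in $L^1_x$, say) with the product of the limits.

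\emph{Proof of \eqref{eq:difference}.} Write the difference as
\begin{equation}
\bigl(S_V(t,s)\phi-S_{V'}(t,s)\phi\bigr)\ov{S_V(t,s')\phi'}+S_{V'}(t,s)\phi\,\ov{\bigl(S_V(t,s')-S_{V'}(t,s')\bigr)\phi'},
\end{equation}
and treat the two pieces symmetrically. Consider the first piece, reduced as above to base point $s$ with $\psi:=S_V(s,s')\phi'$. The difference $S(-t)[S_V(t,s)-S_{V'}(t,s)]S(s)=\sum_{n\ge1}\bigl(W_V^{(n)}-W_{V'}^{(n)}\bigr)(t,s)$ telescopes into $n$ multilinear terms. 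Each of these has one slot equal to $V-V'$ and the remaining slots equal to $V$ or $V'$. Lemma \ref{lem:multilinear Strichartz} applies to each such term, with one factor $\|V-V'\|$ and $n-1$ factors bounded by $r/A$. Summing gives
\begin{equation}
\sum_{n\ge1}\sum_{m\ge0}nB\lg n\rg^{1/2}\lg m\rg^{1/2}A\,r^{n-1}r^m\|V-V'\|\,\|\phi\|\,\|\psi\|\ls\|V-V'\|_{L^2_t(I;\dH^{-1/2}_x)}\|\phi\|_{L^2_x}\|\phi'\|_{L^2_x}.
\end{equation}
For the second piece, $\bigl(S_V(t,s')-S_{V'}(t,s')\bigr)\phi'$ does not directly share a base point with $S_{V'}(t,s)\phi$. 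We rewrite it as
\begin{equation}
S_V(t,s)S_V(s,s')\phi'-S_{V'}(t,s)S_{V'}(s,s')\phi'=\bigl(S_V(t,s)-S_{V'}(t,s)\bigr)S_V(s,s')\phi'+S_{V'}(t,s)\bigl(S_V(s,s')-S_{V'}(s,s')\bigr)\phi'.
\end{equation}
The first term is handled exactly as above. The second term is $S_{V'}(t,s)\chi$ with $\|\chi\|_{L^2_x}\le 2\ep A\|V-V'\|\,\|\phi'\|$ by Lemma \ref{lem:difference estimate}, so \eqref{eq:a priori} with base point $s$ finishes it.

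The main obstacle is bookkeeping rather than analysis: checking that the polynomial factors $n\lg n\rg^{1/2}$ are summable against $r^{n-1}\le 2^{1-n}$, and justifying the density passage. Lemma \ref{lem:multilinear Strichartz} is stated only for $\CL(I)$ potentials, so the termwise expansion must be carried out for smooth potentials first and then extended using the uniform estimates together with Lemma \ref{lem:difference estimate}.
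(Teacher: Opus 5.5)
Your proposal is correct and follows the paper's proof essentially step for step. You rebase via $S_{V'}(t,s')=S_{V'}(t,s)S_{V'}(s,s')$ and sum Lemma \ref{lem:multilinear Strichartz} over the Dyson expansion, and you split the $s'$-based difference into a telescoped Dyson part plus $S_{V'}(t,s)\bigl(S_V(s,s')-S_{V'}(s,s')\bigr)\phi'$ (handled by \eqref{eq:a priori} and Lemma \ref{lem:difference estimate}); this is exactly the paper's $\SA_1$/$\SA_2$ decomposition, with your explicit density extension of Lemma \ref{lem:multilinear Strichartz} beyond $\CL(I)$ being a justification the paper leaves implicit.
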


\begin{proof}
	First, we prove \eqref{eq:a priori}.
	By Lemma \ref{lem:multilinear Strichartz} and \eqref{eq:bound of UV}, we have 
	\begin{align}
		&\No{S_V(t,s)\phi\cdot \ov{S_{V'}(t,s')\phi'}}_{L^2_t(I;\dH^{1/2}_x)}
		= \No{S_V(t,s)\phi\cdot \ov{S_{V'}(t,s)S_{V'}(s,s')\phi'}}_{L^2_t(I;\dH^{1/2}_x)} \\ 
		&\quad \le \sum_{n,m\ge 0} \No{S(t)W_V^{(n)}(t,s)S(-s)\phi \cdot \ov{S(t)W_{V'}^{(m)}(t,s)S(-s)S_{V'}(s,s')\phi'}}_{L^2_t(I;\dH^{1/2}_x)} \\
		&\quad \le B \sum_{n,m\ge 0} \lg n \rg^{1/2} \lg m \rg^{1/2} r^{n+m} \|\phi\|_{L^2_x} \|S_{V'}(s,s')\phi'\|_{L^2_x}  \le 100B \|\phi\|_{L^2_x} \|\phi'\|_{L^2_x},
	\end{align}
	where $B$ is the constant in Lemma \ref{lem:multilinear Strichartz}.
		
	Next, we prove \eqref{eq:difference}.
	Note that
	\begin{align}
		\text{LHS of \eqref{eq:difference}}&\le \No{S_V(t,s)\phi \cdot \ov{(S_V(t,s')-S_{V'}(t,s'))\phi'}}_{L^2_t(I;\dH^{1/2}_x)} \\
		&\quad + \No{(S_V(t,s) - S_{V'}(t,s))\phi \cdot \ov{S_{V'}(t,s')\phi'}}_{L^2_t(I;\dH^{1/2}_x)}  =:\SA+\SB.
	\end{align}
	We only estimate $\SA$ because we can deal with $\SB$ in the same way.
	We have
	\begin{align}
		\SA
		&\le \No{S_V(t,s)\phi \cdot \ov{(S_V(t,s) - S_{V'}(t,s))S_V(s,s')\phi'}}_{L^2_t(I;\dH^{1/2}_x)} \\
		&\quad + \No{S_V(t,s)\phi \cdot \ov{S_{V'}(t,s)(S_V(s,s') - S_{V'}(s,s'))\phi'}}_{L^2_t(I;\dH^{1/2}_x)} =: \SA_1 + \SA_2.
	\end{align}
	By Lemma \ref{lem:multilinear Strichartz} and \eqref{eq:bound of UV}, we have
	\begin{align}
		\SA_1&\le \sum_{n\ge 0, m\ge 1} \Big\|S(t)W_V^{(n)}(t,s) S(-s)\phi \cdot\ov{S(t)(W_{V}^{(m)}(t,s) - W_{V'}^{(m)}(t,s)) S(-s)S_{V}(s,s')\phi'}\Big\|_{L^2_t(I;\dH^{1/2}_x)} \\
		&\le \sum_{n\ge 0, m\ge 1} \bigg(\No{S(t)W_V^{(n)}(t,s)S(-s)\phi\cdot\ov{S(t)W_{V-V',V,\dots,V}^{(m)}(t,s)S(-s)S_{V}(s,s')\phi'}}_{L^2_t(I;\dH^{1/2}_x)} + \cdots \\
		&\qquad\qquad \cdots  + \No{S(t)W_V^{(n)}(t,s)S(-s)\phi\cdot\ov{S(t)W_{V',\dots,V',V-V'}^{(m)}(t,s)S(-s)S_{V}(s,s')\phi'}}_{L^2_t(I;\dH^{1/2}_x)}\bigg) \\
		&\le B A \|V-V'\|_{L^2_t(I;\dH^{-1/2}_x)} \|\phi\|_{L^2_x} \|S_{V}(s,s')\phi'\|_{L^2_x} \sum_{n\ge 0, m\ge 1} m \lg n\rg^{1/2} \lg m\rg^{1/2} r^{n+m-1}  \\
		&\le 100BA \|V-V'\|_{L^2_t(I;\dH^{-1/2}_x)} \|\phi\|_{L^2_x} \|\phi'\|_{L^2_x}.
	\end{align}
	For $\SA_2$, by \eqref{eq:a priori} and Lemma \ref{lem:difference estimate}, we have
	\begin{align}
		\SA_2
		&\le 100B \|\phi\|_{L^2_x} \|(S_V(s,s') - S_{V'}(s,s'))\phi'\|_{L^2_x} \le 400BA\|V-V'\|_{L^2_t(I;\dH^{-1/2}_x)} \|\phi\|_{L^2_x} \|\phi'\|_{L^2_x}.
	\end{align}
	Collecting the above arguments, we have
	\begin{align}
		\SA \le 500BA\|V-V'\|_{L^2_t(I;\dH^{-1/2}_x)} \|\phi\|_{L^2_x} \|\phi'\|_{L^2_x}.
	\end{align}
	Similarly, we have
	\begin{align}
		\SB \le 500BA \|V-V'\|_{L^2_t(I;\dH^{-1/2}_x)} \|\phi\|_{L^2_x} \|\phi'\|_{L^2_x}.
	\end{align}
	Choosing $C:=1000BA$, we obtain \eqref{eq:difference}.
\end{proof}

\section{Well-posedness and scattering} \label{sec:CM}
In this section, we give a proof of Theorems \ref{th:LWP GWP} and \ref{th:CM scattering}.
Recall that the definition of the solution is given in Definition \ref{def:solution}.
The arguments in this section are based on the construction of $S_V$ and the Strichartz estimates established in Section \ref{sec:SV Stri}.
The arguments in this section are much more standard than those in Sections \ref{sec:multilinear estimate} and \ref{sec:SV Stri}.
	
\subsection{Local well-posedness}\label{subsec:LWP}
First, we prove the local well-posedness part of Theorem \ref{th:LWP GWP}.
Consider a general $p:\R_\xi \to \C$ and $q\in \C$.
Choose $R=R(\|p\|_{L^\I_{-1}})>0$ such that
\begin{equation}\label{eq:choice of R}
	A \|p\|_{L^\I_{-1}} R \le \tw.
\end{equation}
Set
$$E_{T,R} := \Ck{(u,\varrho): \|u\|_{\CX(I)} \le R, \quad \|\varrho\|_{L^2_t(I;\dH^{1/2}_x)} \le R },$$
where $T>0$ and $I=[-T,T]$ will be chosen later.
See \eqref{eq:definition of X} for the definition of the $\CX(I)$ norm.
Note that $E_{T,R}$ is a complete metric space with respect to 
$$d((u,\varrho),(u',\varrho')) := \|u-u'\|_{\CX(I)} + \|\varrho-\varrho'\|_{L^2_t(I;\dH^{1/2}_x)}.$$
Define $F[u,\varrho]=(F_1[u,\varrho],F_2[u,\varrho])$, where
\begin{align}
	&F_1[u,\varrho](t) := S_{p(D) \varrho}(t)\phi - iq\int_0^t S_{p(D)\varrho}(t,\ta)|u(\ta)|^2u(\ta)d\ta. \\
	&F_2[u,\varrho](t) := |F_1[u,\varrho](t)|^2.
\end{align}

As a first step of the proof, we prove that $F:E_{T,R} \to E_{T,R}$ is well-defined.
\begin{lemma}
	Choose both $\ep_0=\ep_0(\|p\|_{L^\I_{-1}},|q|)>0$ and $T=T(\|p\|_{L^\I_{-1}},|q|)>0$ sufficiently small.
	Assume that $\|\phi\|_{L^2_x(\R)} \le \ep_0$.
	Then, for any $(u,\varrho)\in E_{T,R}$, we have $F[u,\varrho]\in E_{T,R}$.
\end{lemma}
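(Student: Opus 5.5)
The plan is to set $V:=p(D)\varrho$ and first check that the potential is small enough to apply the linear theory. By \eqref{eq:condition a}, $\|V\|_{L^2_t(I;\dH^{-1/2}_x)}\le \|p\|_{L^\I_{-1}}\|\varrho\|_{L^2_t(I;\dH^{1/2}_x)}\le \|p\|_{L^\I_{-1}}R$. By the choice \eqref{eq:choice of R}, this gives $A\|V\|_{L^2_t(I;\dH^{-1/2}_x)}\le \tw$. Hence $S_V(t,\ta)$ is given by Definition \ref{def:UV} on $I$, and Lemmas \ref{lem:standard Stri} and \ref{lem:key} both apply with absolute constants, uniformly in $T$.

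\textbf{Estimate of $F_1$.} By \eqref{eq:Stri 1} and \eqref{eq:Stri 2}, placing the cubic term in $L^1_t(I;L^2_x)$,
\begin{equation}
\|F_1[u,\varrho]\|_{\CX(I)}\le C_{\textup{Stri}}\|\phi\|_{L^2_x}+C_{\textup{Stri}}|q|\,\||u|^2u\|_{L^1_t(I;L^2_x)}.
\end{equation}
Hölder in time then gives
\begin{equation}
\||u|^2u\|_{L^1_t(I;L^2_x)}\le \|u\|_{L^\I_tL^2_x}\|u\|_{L^2_t(I;L^\I_x)}^2\le (2T)^{1/2}\|u\|_{L^\I_tL^2_x}\|u\|_{L^4_t(I;L^\I_x)}^2\le (2T)^{1/2}R^3.
\end{equation}
So $\|F_1\|_{\CX(I)}\le C_{\textup{Stri}}\ep_0+C_{\textup{Stri}}|q|(2T)^{1/2}R^3$. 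This is at most $R$ once $\ep_0\le R/(2C_{\textup{Stri}})$ and $T$ is small depending on $|q|$ and $R$.

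\textbf{Estimate of $F_2$.} Write $F_1=\Phi_1+\Phi_2$ with $\Phi_1:=S_V(t,0)\phi$ and $\Phi_2:=-iq\int_0^tS_V(t,\ta)N(\ta)d\ta$, where $N:=|u|^2u$. Then
\begin{equation}
|F_1|^2=|\Phi_1|^2+2\Re(\Phi_1\ov{\Phi_2})+|\Phi_2|^2 .
\end{equation}
\begin{itemize}
\item The pure linear part is handled by \eqref{eq:a priori}: $\||\Phi_1|^2\|_{L^2_t\dH^{1/2}_x}\le C\ep_0^2$.
\item For the Duhamel part, use Minkowski's inequality in $\ta$. For fixed $\ta$, the map $t\mapsto S_V(t,\ta)N(\ta)\II_{\{\ta\text{ between }0\text{ and }t\}}$ is a truncated copy of $S_V(\cdot,\ta)N(\ta)$. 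This suggests estimating
\begin{equation}
\Big\|\Phi_1\ov{\Phi_2}\Big\|_{L^2_t\dH^{1/2}_x}\le |q|\int_I\big\|\II_{\{|t|\ge|\ta|\}}S_V(t,0)\phi\,\ov{S_V(t,\ta)N(\ta)}\big\|_{L^2_t\dH^{1/2}_x}\,d\ta .
\end{equation}
The obstacle is the sharp time cutoff $\II_{\{|t|\ge|\ta|\}}$. Multiplication by a time indicator is harmless for an $L^2_t$ norm with values in $\dH^{1/2}_x$, since it acts pointwise in $t$. So the norm is bounded by the untruncated one, and \eqref{eq:a priori} with $s=0$ and $s'=\ta$ gives $C\|\phi\|_{L^2}\|N(\ta)\|_{L^2}$. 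Integrating in $\ta$ yields $C|q|\ep_0(2T)^{1/2}R^3$.
\item The term $|\Phi_2|^2$ is treated the same way with a double integral in $(\ta,\ta')$, giving $C|q|^2(2T)R^6$.
\end{itemize}

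Summing, $\|F_2\|_{L^2_t(I;\dH^{1/2}_x)}\le C\ep_0^2+C|q|\ep_0T^{1/2}R^3+C|q|^2TR^6\le R$ after shrinking $\ep_0$ and $T$ (depending only on $\|p\|_{L^\I_{-1}}$ and $|q|$). Continuity in time of $F_1$ with values in $L^2$ follows from the strong continuity in Lemma \ref{lem:basic UV}, so $F[u,\varrho]\in E_{T,R}$.

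The only genuinely delicate point is the Duhamel term in the bilinear estimate, where the $\ta$-dependent starting time has to be absorbed into \eqref{eq:a priori} uniformly in $s'\in I$. The truncation-by-indicator observation is exactly what makes this work.
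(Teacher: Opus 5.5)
Your proof is correct and follows the paper's argument essentially step by step. For $F_1$ you use the smallness $A\|p(D)\varrho\|_{L^2_t\dH^{-1/2}_x}\le\tw$ from \eqref{eq:choice of R} and Lemma \ref{lem:standard Stri}, with the cubic term placed in $L^1_t(I;L^2_x)$. For $F_2$ you use the three-term expansion of $|F_1|^2$, controlled by Minkowski in $\ta$ together with \eqref{eq:a priori}. Your explicit remark that the time cutoff coming from the Duhamel integral can only decrease the $L^2_t(I;\dH^{1/2}_x)$ norm is a step the paper uses silently, and your justification of it is valid.
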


\begin{proof}
Since $A\|p(D)\varrho\|_{L^2_t \dH^{-1/2}_x}\le 1/2$ by the choice of $R>0$, Lemma \ref{lem:standard Stri} implies
\begin{align}
	\|F_1[u,\varrho]\|_{\CX(I)}
	&\le C_{\textup{Stri}}\|\phi\|_{L^2_x} + |q| C_{\textup{Stri}}\||u|^2 u\|_{L^1_t(I;L^2_x)} \\
	&\le C_{\textup{Stri}} \ep_0 + \sqrt{2}|q|C_{\textup{Stri}}  R^3T^{1/2}.
\end{align}
Choosing sufficiently small $\ep_0=\ep_0(\|p\|_{L^\I_{-1}},|q|)>0$ and $T=T(\|p\|_{L^\I_{-1}},|q|)>0$, we obtain
\begin{align}
	\|F_1[u,\varrho]\|_{\CX(I)} \le R.
\end{align}

Next, we consider $F_2$. We have
\begin{multline}
	\|F_2[u,\varrho]\|_{L^2_t(I;\dH^{1/2}_x)}
	\le  \No{|S_{p(D) \varrho}(t)\phi|^2}_{L^2_t(I;\dH^{1/2}_x)}
	      + |q|^2 \No{\abs{\int_0^t S_{p(D)\varrho}(t,\ta)|u(\ta)|^2u(\ta)d\ta}^2}_{L^2_t(I;\dH^{1/2}_x)} \\
	\quad + 2|q| \No{\ov{S_{p(D)\varrho}(t)\phi} \int_0^t S_{p(D)\varrho}(t,\ta)|u(\ta)|^2u(\ta)d\ta}_{L^2_t(I;\dH^{1/2}_x)} =: \SA + \SB + \SC.
\end{multline}
By Lemma \ref{lem:key},
\begin{equation}
	\SA \le C \|\phi\|_{L^2_x}^2 \le C\ep_0^2.
\end{equation}
Again by Lemma \ref{lem:key}, we have
\begin{align}
	\SB
	&\le |q|^2  \int_I \int_I \No{S_{p(D)\varrho}(t,\ta)|u(\ta)|^2u(\ta) \cdot \ov{S_{p(D)\varrho}(t,\ta')|u(\ta')|^2u(\ta')}}_{L^2_t \dH^{1/2}_x} d\ta d\ta' \\
	&\le  C |q|^2 \int_I \int_I \||u(\ta)|^2u(\ta)\|_{L^2_x} \||u(\ta')|^2u(\ta')\|_{L^2_x} d\ta d\ta'
	\le 4C |q|^2 R^6 T. 
\end{align}
In the same way, we conclude that
\begin{align}
	\SC
	\le 2C|q|R^3 \ep_0 T^{1/2}.
\end{align}
Hence, collecting the estimates for $\SA$, $\SB$, and $\SC$, we conclude that
\begin{align}
	\|F_2[u,\varrho]\|_{L^2_t(I;\dH^{1/2}_x)}
	&\le C \ep_0^2 + 4C |q|^2 R^6 T + 2C|q|R^3 \ep_0 T^{1/2}.
\end{align}
Choosing sufficiently small $\ep_0=\ep_0(\|p\|_{L^\I_{-1}},|q|)>0$ and $T=T(\|p\|_{L^\I_{-1}},|q|)>0$, we obtain
\begin{align}
	\|F_2[u,\varrho]\|_{L^2_t(I;\dH^{1/2}_x)} \le R,
\end{align}
which completes the proof.
\end{proof}

To complete the existence proof, it suffices to prove
\begin{lemma}\label{lem:F1 and F2 difference}
If we choose sufficiently small $\ep_0 = \ep_0(\|p\|_{L^\I_{-1}},|q|)$ and $T=T(\|p\|_{L^\I_{-1}},|q|)>0$, then for any $(u,\varrho), (u',\varrho') \in E_{T,R}$, it holds that
\begin{align}
		&\No{F_1[u,\varrho] - F_1[u',\varrho']}_{\CX(I)} \le \ft \K{\|u-u'\|_{\CX(I)} + \|\varrho-\varrho'\|_{L^2_t(I;\dH^{1/2}_x)}}, \label{eq:F1 difference}\\
		&\No{F_2[u,\varrho] - F_2[u',\varrho']}_{L^2_t(I;\dH^{1/2}_x(\R))} \le \ft \K{\|u-u'\|_{\CX(I)} + \|\varrho-\varrho'\|_{L^2_t(I;\dH^{1/2}_x)}} \label{eq:F2 difference}.
\end{align}
\end{lemma}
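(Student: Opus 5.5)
We split each difference into a part where only the potential changes and a part where only the nonlinearity changes, then apply the difference estimates of Section \ref{sec:SV Stri}. Write $V:=p(D)\varrho$ and $V':=p(D)\varrho'$. By \eqref{eq:condition a} we have $\|V-V'\|_{L^2_t(I;\dH^{-1/2}_x)}\le \|p\|_{L^\I_{-1}}\|\varrho-\varrho'\|_{L^2_t(I;\dH^{1/2}_x)}$. By \eqref{eq:choice of R}, both $V$ and $V'$ satisfy the smallness hypothesis $r\le \tw$ of Lemmas \ref{lem:difference SV} and \ref{lem:key}.

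\textbf{Estimate \eqref{eq:F1 difference}.} Write
\begin{align}
F_1[u,\varrho]-F_1[u',\varrho'] &= \big(S_V(t)-S_{V'}(t)\big)\phi - iq\int_0^t \big(S_V(t,\ta)-S_{V'}(t,\ta)\big)|u|^2u\,d\ta \\
&\quad - iq\int_0^t S_{V'}(t,\ta)\big(|u|^2u-|u'|^2u'\big)\,d\ta .
\end{align}
\eqref{eq:Stri difference 1} bounds the first term by $C'_{\rm Stri}\|p\|_{L^\I_{-1}}\ep_0\|\varrho-\varrho'\|$. \eqref{eq:Stri difference 2} with the $L^1_tL^2_x$ norm bounds the second term by $C\|p\|_{L^\I_{-1}}|q|R^3T^{1/2}\|\varrho-\varrho'\|$. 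The third term is handled by \eqref{eq:Stri 2} together with the pointwise bound $\big||u|^2u-|u'|^2u'\big|\ls (|u|^2+|u'|^2)|u-u'|$. Hölder in time then gives a bound of $C|q|R^2T^{1/2}\|u-u'\|_{\CX(I)}$, using $\|u\|_{L^4_tL^\I_x}\le R$ and $\|u-u'\|_{L^\I_tL^2_x}$. Choosing $\ep_0$ and $T$ small makes every coefficient at most $1/4$.

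\textbf{Estimate \eqref{eq:F2 difference}.} Write $F_2-F_2'$ as a sum of bilinear differences of the form $a\ov b-a'\ov{b'}$, where $a,b\in\{S_V(t)\phi,\ \int_0^t S_V(t,\ta)N(\ta)d\ta\}$ with $N=|u|^2u$, and $a',b'$ are the primed analogues. Telescope as $(a-a')\ov b+a'\ov{(b-b')}$.

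\begin{itemize}
\item For the $\phi\phi$ term, \eqref{eq:difference} of Lemma \ref{lem:key} gives directly a bound of $C\|p\|_{L^\I_{-1}}\ep_0^2\|\varrho-\varrho'\|$.
\item For terms involving the Duhamel integral, move the $\ta$-integral outside the $L^2_t\dH^{1/2}_x$ norm by Minkowski, as was done for $\SB$ and $\SC$ in the previous lemma. Then apply Lemma \ref{lem:key} pointwise in $\ta,\ta'$ with $s=\ta$ and $s'=\ta'$; this is exactly why that lemma allows different base times $s,s'$.
\item Where the potential changes, \eqref{eq:difference} gives a factor $\|V-V'\|$. Where the nonlinearity changes, \eqref{eq:a priori} gives a factor $\|N(\ta)-N'(\ta)\|_{L^2_x}$.
\end{itemize}

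Integrating in time then uses $\int_I\|N\|_{L^2_x}d\ta\le \sqrt2R^3T^{1/2}$ and $\int_I\|N-N'\|_{L^2_x}d\ta\ls R^2T^{1/2}\|u-u'\|_{\CX(I)}$. Every resulting term carries a factor of $\ep_0$, $T^{1/2}$, or $T$, multiplied by constants depending on $\|p\|_{L^\I_{-1}}$, $|q|$ and $R$. Shrinking $\ep_0$ and $T$ therefore yields \eqref{eq:F2 difference}.

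\textbf{Main obstacle.} The only delicate point is the mixed Duhamel–Duhamel term in $F_2$. There one must check that the difference estimate \eqref{eq:difference} holds uniformly in the two base times $\ta$ and $\ta'$; Lemma \ref{lem:key} states it for arbitrary $s,s'\in I$. A second check is that the mixed products with different potentials, such as $S_V(t,\ta)N\cdot\ov{S_{V'}(t,\ta')N'}$, are bounded by \eqref{eq:a priori}; this holds because that estimate allows $V\neq V'$. The rest is bookkeeping of the powers of $\ep_0$, $T$ and $R$.
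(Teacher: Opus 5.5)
Your proposal is correct and follows the paper's proof essentially step by step: for $F_1$ it uses the same three-term split, bounded by \eqref{eq:Stri difference 1}, \eqref{eq:Stri difference 2} and \eqref{eq:Stri 2}, and for $F_2$ it uses the same Minkowski-in-$\ta$ reduction to Lemma \ref{lem:key} with base times $s=\ta$, $s'=\ta'$. One bookkeeping point: the paper splits $F_2[u,\varrho]-F_2[u',\varrho']$ as $\big(F_2[u,\varrho]-F_2[u,\varrho']\big)+\big(F_2[u,\varrho']-F_2[u',\varrho']\big)$, so every potential-change term is a full two-sided difference $S_V\phi\,\ov{S_V\phi'}-S_{V'}\phi\,\ov{S_{V'}\phi'}$ covered by \eqref{eq:difference} as stated, and no mixed-potential products arise. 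Your one-sided telescoping $(a-a')\ov{b}$ instead produces terms such as $(S_V-S_{V'})(t,\ta)N(\ta)\cdot\ov{S_V(t,\ta')N(\ta')}$, which \eqref{eq:difference} does not literally cover. These need the one-sided bounds proved for $\mathscr{A},\mathscr{B}$ inside the proof of Lemma \ref{lem:key} (the same argument works whichever potential sits in the other factor); alternatively, you can simply regroup as the paper does.
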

If Lemma \ref{lem:F1 and F2 difference} is true, $F:E_{T,R} \to E_{T,R}$ is a contraction mapping.
Hence, we obtain a unique fixed point in $E_{T,R}$.
The Lipschitz continuity also follows from the standard argument.
Finally, we give a proof of Lemma \ref{lem:F1 and F2 difference}.
\begin{proof}[Proof of Lemma \ref{lem:F1 and F2 difference}]
	\noindent \textbf{Step 1: Proof of \eqref{eq:F1 difference}.}
	First, we have
	\begin{align}
		&\|F_1[u,\varrho] - F_1[u',\varrho']\|_{\CX(I)}
		\le \No{S_{p(D)\varrho}(t)\phi - S_{p(D)\varrho'}(t)\phi}_{\CX(I)} \\
		&\qquad \qquad \qquad \qquad + |q|\No{\int_0^t S_{p(D)\varrho}(t,\ta)(|u|^2 u)(\ta)d\ta - \int_0^t S_{p(D)\varrho'}(t,\ta)(|u|^2 u)(\ta)d\ta }_{\CX(I)} \\
		&\qquad \qquad \qquad \qquad + |q|\No{\int_0^t S_{p(D)\varrho'}(t,\ta)(|u|^2 u - |u'|^2 u')(\ta)d\ta }_{\CX(I)} = \SA + \SB +\SC.
	\end{align}
	By Lemma \ref{lem:difference SV},
	\begin{align}
		\SA \le C_{\textup{Stri}}' \|p(D)(\varrho-\varrho')\|_{L^2_t(I;\dH^{-1/2}_x)} \|\phi\|_{L^2_x} 
		\le C_{\textup{Stri}}'  \|p\|_{L^\I_{-1}} \ep_0 \|\varrho-\varrho'\|_{L^2_t(I;\dH^{1/2}_x)}.
	\end{align}
	Again by Lemma \ref{lem:difference SV}, 
	\begin{align}
		\SB
		&\le C_{\textup{Stri}}' |q| \|p(D)(\varrho-\varrho')\|_{L^2_t(I;\dH^{-1/2}_x)} \||u|^2 u\|_{L^1_t(I;L^2_x)} \\
		&\le \sqrt{2}C_{\textup{Stri}}' |q| \|p\|_{L^\I_{-1}} R^3  T^{1/2}\|\varrho-\varrho'\|_{L^2_t(I;\dH^{1/2}_x)}.
	\end{align}
	By Lemma \ref{lem:standard Stri}, 
	\begin{align}
		\SC \le C_{\textup{Stri}} |q| \||u|^2 u - |u'|^2 u'\|_{L^1_t(I;L^2_x)}
		\le 3\sqrt{2} C_{\textup{Stri}} |q| R^2  T^{1/2}\|u-u'\|_{\CX(I)}.
	\end{align}
	Collecting the above arguments, we conclude that
	\begin{align}
		\|F_1[u,\varrho] - F_1[u',\varrho']\|_{\CX(I)}
		\le \ft \K{\|\varrho-\varrho'\|_{L^2_t(I;\dH^{1/2}_x)} + \|u-u'\|_{\CX(I)}}
	\end{align}
	by choosing sufficiently small $\ep_0=\ep_0(\|p\|_{L^\I_{-1}},|q|)>0$ and $T=T(\|p\|_{L^\I_{-1}},|q|)>0$.
	
	\noindent \textbf{Step 2: Proof of \eqref{eq:F2 difference}.}
	We have
	\begin{align}
		\|F_2[u,\varrho] - F_2[u',\varrho']\|_{L^2_t(I;\dH^{1/2}_x)}
		&\le \|F_2[u,\varrho] - F_2[u,\varrho']\|_{L^2_t(I;\dH^{1/2}_x)}
		+ \|F_2[u,\varrho'] - F_2[u',\varrho']\|_{L^2_t(I;\dH^{1/2}_x)} \\
		&=:\SD+\SE.
	\end{align}
	\noindent \textbf{Step 2.1: Estimate of $\SD$.}
	We have
	\begin{align}
		\SD &\le \No{|S_{p(D)\varrho}(t)\phi|^2 - |S_{p(D)\varrho'}(t)\phi|^2 }_{L^2_t(I;\dH^{1/2}_x)}  \\
			&\quad   + |q|^2 \No{\abs{\int_0^t S_{p(D)\varrho}(t,\ta)(|u|^2u)(\ta)d\ta}^2 - \abs{\int_0^t S_{p(D)\varrho'}(t,\ta)(|u|^2 u)(\ta) d\ta}^2}_{L^2_t(I;\dH^{1/2}_x)} \\
			&\quad  + 2|q| \bigg\|S_{p(D)\varrho}(t)\phi \int_0^t \ov{S_{p(D)\varrho}(t,\ta)(|u|^2u)(\ta)} d\ta \\
			&\qquad \qquad \qquad -S_{p(D)\varrho'}(t)\phi \int_0^t \ov{S_{p(D)\varrho'}(t,\ta)(|u|^2u)(\ta)} d\ta
		\bigg\|_{L^2_t(I;\dH^{1/2}_x)}=:\SD_1 + \SD_2 + \SD_3.
	\end{align}
	By Lemma \ref{lem:key},
	\begin{align}
		\SD_1 
		&\le C \|p(D)\varrho-p(D)\varrho'\|_{L^2_t(I;\dH^{-1/2}_x)} \|\phi\|_{L^2_x}^2 \\
		&\le C \|p\|_{L^\I_{-1}} \ep_0^2 \|\varrho-\varrho'\|_{L^2_t(I;\dH^{1/2}_x)}.
	\end{align}
	Again by Lemma \ref{lem:key}, 
	\begin{align}
		\SD_2 &\le |q|^2\int_I \int_I \Big\| S_{p(D)\varrho}(t,\ta)(|u|^2 u)(\ta) \cdot \ov{S_{p(D)\varrho}(t,\ta')(|u|^2u)(\ta')}  \\
		&\qquad \qquad \qquad \qquad \qquad  -S_{p(D)\varrho'}(t,\ta)(|u|^2 u)(\ta) \cdot \ov{S_{p(D)\varrho'}(t,\ta')(|u|^2u)(\ta')} \Big\|_{L^2_t(I;\dH^{1/2}_x)} d\ta d\ta'
		\\
		&\le C |q|^2 \|p(D)\varrho-p(D)\varrho'\|_{L^2_t(I;\dH^{-1/2}_x)} \K{\int_I \||u(\ta)|^2u(\ta)\|_{L^2_x} d\ta}^2\\
		&\le 2C |q|^2 \|p\|_{L^\I_{-1}}  R^6 T \|\varrho-\varrho'\|_{L^2_t(I;\dH^{1/2}_x)}.
	\end{align}
	Similarly, 
	\begin{align}
		\SD_3 &\le \sqrt{2}C |q| \|p\|_{L^\I_{-1}}  R^3 \ep_0T^{1/2} \|\varrho-\varrho'\|_{L^2_t(I;\dH^{1/2}_x)}.
	\end{align}
	Collecting the above estimates, we obtain
	\begin{equation}
		\SD \le \ft \|\varrho-\varrho'\|_{L^2_t(I;\dH^{1/2}_x)}
	\end{equation}
	by choosing sufficiently small $\ep_0=\ep_0(\|p\|_{L^\I_{-1}},|q|)>0$ and $T=T(\|p\|_{L^\I_{-1}},|q|)>0$.

	\noindent \textbf{Step 2.2: Estimate of $\SE$.}
	We have
	\begin{align}
		\SE &\le 2|q| \No{S_{p(D)\varrho'}(t)\phi \int_0^t \ov{S_{p(D)\varrho'}(t,\ta)(|u|^2u - |u'|^2 u')(\ta)} d\ta}_{L^2_t(I;\dH^{1/2}_x)}\\
		&\quad + |q|^2 \No{\abs{\int_0^t S_{p(D)\varrho'}(t,\ta)(|u|^2u)(\ta)d\ta}^2 - \abs{\int_0^t S_{p(D)\varrho'}(t,\ta)(|u'|^2 u')(\ta) d\ta}^2}_{L^2_t(I;\dH^{1/2}_x)} \\
		&=: \SE_1 +\SE_2.
	\end{align}
	By Lemma \ref{lem:key}, we have
	\begin{align}
		\SE_1 
		&\le 2|q| \int_I \No{S_{p(D)\varrho'}(t)\phi \ov{S_{p(D)\varrho'}(t,\ta)(|u|^2u - |u'|^2 u')(\ta)}}_{L^2_t(I;\dH^{1/2}_x)} d\ta \\
		&\le 2C|q|\ep_0 \int_I \No{|u(\ta)|^2u(\ta) - |u'(\ta)|^2 u'(\ta)}_{L^2_x} d\ta \\
		&\le 6C |q|R^2 \ep_0 T^{1/2} \|u-u'\|_{\CX(I)}.
	\end{align}
	Similarly, by Lemma \ref{lem:key}, we have
	\begin{align}
		\SE_2 &\le |q|^2 \No{\int_0^t S_{p(D)\varrho'}(t,\ta)(|u|^2u - |u'|^2u')(\ta)d\ta \int_0^t \ov{S_{p(D)\varrho'}(t,\ta)(|u|^2u)(\ta)} d\ta}_{L^2_t(I;\dH^{1/2}_x)} \\
		&\qquad + |q|^2 \No{\int_0^t S_{p(D)\varrho'}(t,\ta)(|u'|^2u')(\ta)d\ta \int_0^t \ov{S_{p(D)\varrho'}(t,\ta)(|u|^2u-|u'|^2 u')(\ta)} d\ta}_{L^2_t(I;\dH^{1/2}_x)}   \\
		&\le 12C  |q|^2 T R^5 \|u-u'\|_{\CX(I)}.
	\end{align}
	Collecting the above estimates, we obtain
	\begin{equation}
		\SE \le \ft \|u-u'\|_{\CX(I)}
	\end{equation}
	by choosing sufficiently small $\ep_0=\ep_0(\|p\|_{L^\I_{-1}},|q|)>0$ and $T=T(\|p\|_{L^\I_{-1}},|q|)>0$.	
	
	\noindent\textbf{Step 2.3: Conclusion.}
	Collecting the above estimates, we have
	\begin{align}
		\|F_2[u,\varrho] - F_2[u',\varrho']\|_{L^2_t(I;\dH^{1/2}_x)}
		&\le \SD+\SE \le \ft\K{ \|u-u'\|_{\CX(I)} + \|\varrho-\varrho'\|_{L^2_t(I;\dH^{1/2}_x)} }.
	\end{align}
\end{proof}

\subsection{Global well-posedness}\label{subsec:GWP}
First, we prove local-in-time mass conservation.
\begin{lemma}\label{lem:local in time conservation}
	Let $\ep_0$, $R$, and $T$ be small constants in Section \ref{subsec:LWP}, which depend only on $\|p\|_{L^\I_{-1}}$ and $|q|$.
	We write $I:=[-T,T]$.
	Let $\phi\in L^2_x(\R)$ satisfy $\|\phi\|_{L^2_x}\le \ep_0$ and $u\in C_t(I;L^2_x) \cap L^{4}_t(I;L^\I_x)$ be the solution to \eqref{eq:CMDNLS} such that $u(0)=\phi$, which was constructed in Section \ref{subsec:LWP}.
	Finally, assume that $p(\xi)$ is real-valued and $q\in \R$.
	Then, the mass is conserved, namely, we have
	$$\|u(t)\|_{L^2_x(\R)} = \|\phi\|_{L^2_x(\R)}$$
	for any $t\in I$.
\end{lemma}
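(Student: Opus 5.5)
We will justify mass conservation by approximating the potential and the nonlinearity with smooth data, for which the classical theory applies, and then pass to the limit using the stability estimates of Section \ref{sec:SV Stri}. Write $V:=p(D)\varrho$ with $\varrho=|u|^2$, and $N:=|u|^2u$. Then $u(t)=S_V(t,0)\phi-iq\int_0^tS_V(t,\ta)N(\ta)d\ta$ on $I$. Since $p$ is real-valued and $\varrho$ is real, $\ov{V}=p(-D)\varrho$. Note that $p(-D)\varrho$ need not equal $p(D)\varrho$, because the symbol of $p(D)$ acting on a real function yields a real function only if $p$ is even. This subtlety is addressed last.

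\textbf{Step 1.} Choose $V_j\in\CL(I)$ with $V_j\to V$ in $L^2_t(I;\dH^{-1/2}_x)$, taking $V_j$ real-valued when $V$ is. Choose $\phi_j\in\CS$ with $\phi_j\to\phi$ in $L^2$, and smooth $N_j\to N$ in $L^1_t(I;L^2_x)$. This is possible since $N\in L^1_tL^2_x$ by Hölder, as used in Section \ref{subsec:LWP}. Set
\[
u_j(t):=S_{V_j}(t,0)\phi_j-iq\int_0^tS_{V_j}(t,\ta)N_j(\ta)d\ta .
\]
By Lemmas \ref{lem:difference estimate} and \ref{lem:difference SV}, together with \eqref{eq:bound of UV}, we get $u_j\to u$ in $C_t(I;L^2_x)$. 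For smooth real $V_j$, $u_j$ is a classical solution of $i\pl_tu_j+\pl_x^2u_j-V_ju_j=qN_j$. Hence
\[
\frac{d}{dt}\|u_j\|_{L^2}^2=2\,\Im\Bigl(q\int\ov{u_j}N_j\,dx\Bigr)\quad\text{(up to sign)},
\]
because $\int V_j|u_j|^2$ is real and drops out.

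\textbf{Step 2.} Integrating in time gives $\|u_j(t)\|^2-\|\phi_j\|^2=\pm2\Im\,q\int_0^t\int\ov{u_j}N_j\,dx\,d\ta$. Pass to the limit using $u_j\to u$ in $C_tL^2_x$ and $N_j\to N$ in $L^1_tL^2_x$. The right-hand side tends to $\pm2q\,\Im\int_0^t\int|u|^4\,dx\,d\ta=0$, since $q\in\R$ and $\ov{u}N=|u|^4$ is real. This yields $\|u(t)\|=\|\phi\|$.

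\textbf{Main obstacle: reality of $V$.} Step 1 needs $V_j$ real, which requires $V=p(D)\varrho$ to be real. This holds when $p$ is even, but not automatically otherwise; the CM-DNLS symbol $|\xi|+\xi$ is not even. To handle this, I would split
\[
p=p_e+p_o,\qquad p_e(\xi):=\tfrac12\bigl(p(\xi)+p(-\xi)\bigr),\quad p_o(\xi):=\tfrac12\bigl(p(\xi)-p(-\xi)\bigr).
\]
Then $p_e(D)\varrho$ is real, while $p_o(D)\varrho=i\,m(D)\varrho$ with $m$ odd and real, so that $m(D)\varrho$ is purely imaginary. Consequently $\Im V=-i\,p_o(D)\varrho$ up to sign, and the classical energy identity picks up the extra term
\[
2\int(\Im V_j)\,|u_j|^2\,dx .
\]
In the limit this becomes $\int_0^t\langle \varrho,\, c\,p_o(D)\varrho\rangle\,d\ta$, a pairing of $\dH^{1/2}$ with $\dH^{-1/2}$. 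It vanishes because the quadratic form with an odd real-symbol-times-$i$ multiplier on a real function is zero: $\int\overline{\wh\varrho(\xi)}\,p_o(\xi)\,\wh\varrho(\xi)\,d\xi=0$, since $|\wh\varrho|$ is even and $p_o$ is odd.

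Making this rigorous requires three things. First, approximate $V$ by $V_j=p(D)\varrho_j$ with real $\varrho_j$ smooth. Second, verify that the complex potential still gives a classical propagator with the identity $\frac{d}{dt}\|u_j\|^2=2\int \Im V_j|u_j|^2+\dots$. Third, pass $\int_0^t\int\Im V_j\,|u_j|^2$ to the limit; here the bilinear estimate Lemma \ref{lem:key} gives $|u_j|^2\to\varrho$ in $L^2_t\dH^{1/2}_x$, which controls the limit. I expect this limiting pairing, and the convergence of $|u_j|^2$ including its inhomogeneous part, to be the delicate point.
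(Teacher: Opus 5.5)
Your argument is correct, and its core matches the paper's. You approximate $V=p(D)\varrho$ by nice, possibly complex, $V_j$ and use the classical identity $\frac{d}{dt}\|u_j\|_{L^2}^2=2\Im\int V_j|u_j|^2\,dx+\cdots$. You then pass to the limit via Lemma \ref{lem:key}, which gives $|u_j|^2\to\varrho$ in $L^2_t\dH^{1/2}_x$, and observe that the limiting pairing has zero imaginary part.

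\textbf{Treatment of the cubic term.} This is the one real difference. You freeze the whole cubic term as a forcing $N=|u|^2u$, approximated in $L^1_tL^2_x$. The paper instead freezes $q|u|^2$ as an extra real potential and solves
$u_j=S_{V_j}(t)\phi-iq\int_0^tS_{V_j}(t,\ta)|u(\ta)|^2u_j(\ta)\,d\ta$.
\begin{itemize}
\item The paper's choice makes the $q$-contribution vanish identically at the level of $u_j$. The cost is an auxiliary fixed-point problem, with convergence $u_j\to u$ only on a shorter interval $I_0$, followed by an iteration to cover $I$.
\item Your $u_j$ is an explicit formula. So $u_j\to u$ in $C_t(I;L^2_x)$ and $|u_j|^2\to|u|^2$ in $L^2_t(I;\dH^{1/2}_x)$ hold on all of $I$ at once. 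They follow from Lemmas \ref{lem:difference estimate} and \ref{lem:key}, \eqref{eq:bound of UV}, and Minkowski in $\ta$, exactly as in the estimates of Section \ref{subsec:LWP}.
\item Your $q$-term only disappears in the limit, as $2q\,\Im\int_0^t\!\int|u|^4\,dx\,d\ta=0$.
\end{itemize}
So your route is slightly cleaner. The step you flagged as delicate is routine; you only need $\sup_jA\|V_j\|_{L^2_t(I;\dH^{-1/2}_x)}\le\tw$, which you can arrange by a slight rescaling of the $V_j$.

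\textbf{Reality of the pairing.} Your even/odd splitting, and the restriction $V_j=p(D)\varrho_j$ with real $\varrho_j$, are unnecessary. For real $\varrho$ one has directly $\int V\varrho\,dx=\int p(\xi)|\wh\varrho(\xi)|^2\,d\xi$, which is real whenever $p$ is. Hence any $V_j\in\CL(I)$ with $V_j\to V$ works, and reality of $V_j$ plays no role. This is exactly how the paper handles non-even symbols such as $|\xi|+\xi$.

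\textbf{A slip.} The sentence ``$p_o(D)\varrho=i\,m(D)\varrho$ with $m$ odd and real, so that $m(D)\varrho$ is purely imaginary'' is self-contradictory, since it would make $p_o(D)\varrho$ real. What is true, and what you actually use, is that $p_o(D)\varrho$ itself is purely imaginary, because $\ov{p_o(D)\varrho}=p_o(-D)\varrho=-p_o(D)\varrho$.
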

\begin{proof}[Proof of Lemma \ref{lem:local in time conservation}]
Let $V := p(D)\varrho  \in L^2_t(I;\dH^{-1/2}_x)$.
Then, we can choose $V_j\in \CL(I)$ such that $V_j \to V$ in $L^2_t(I;\dH^{-1/2}_x)$.
Since $A\|V\|_{L^2_t(I;\dH^{-1/2}_x)} \le 1/2$, we can assume 
$$\sup_j A\|V_j\|_{L^2_t(I;\dH^{-1/2}_x)} \le \tw,$$
where $A$ is the constant in Theorem \ref{th:multilinear 1}.
Consider the equation for $u_j$:
\begin{align}
	u_j(t) := S_{V_j}(t)\phi - iq \int_0^t S_{V_j}(t,\ta)|u(\ta)|^2 u_j(\ta)d\ta.
\end{align}
By essentially the same arguments used in Section \ref{subsec:LWP}, we obtain a solution $u_j\in C_t(I;L^2_x)$ such that
\begin{equation}
	\sup_j \|u_j\|_{\CX(I)} \le R \quad \text{and} \quad \sup_{j}\||u_j|^2\|_{L^2_t(I;\dH^{1/2}_x)} \le R,
\end{equation}
where $R>0$ is chosen in \eqref{eq:choice of R}.
Note that $(u_j)_j$ approximates $u$. More precisely, we have
\begin{align}
	&\lim_{j\to \I}\|u-u_j\|_{C_t(I_0;L^2_x)} = 0, \label{eq:L^2 norm convergence} \\
	&\lim_{j\to \I}\||u|^2-|u_j|^2\|_{L^2_t(I_0;\dH^{1/2}_x)} = 0, \label{eq:H norm convergence}
\end{align}
where $I_0=[-T_0,T_0]$ and $T_0=T_0(\|p\|_{L^\I_{-1}}, |q|)\in (0,T]$ is chosen sufficiently small.
We omit the proof of \eqref{eq:L^2 norm convergence} and \eqref{eq:H norm convergence} because they can be proved in exactly the same way as in the proof of Lemma \ref{lem:F1 and F2 difference}.
	Since $V_j$ is sufficiently regular,
	\begin{equation}\label{eq:identity}
		\|u_j(t)\|_{L^2_x}^2 = \|\phi\|_{L^2_x}^2 + 2\Im\Dk{\int_0^t \int_{\R} (|\pl_x|^{-1/2}V_j(\ta,x)) (|\pl_x|^{1/2} |u_j(\ta,x)|^2)dxd\ta}.
	\end{equation}
	Passing to the limit using \eqref{eq:L^2 norm convergence} and \eqref{eq:H norm convergence}, we obtain
	\begin{equation}
		\|u(t)\|_{L^2_x}^2 = \|\phi\|_{L^2_x}^2 + 2\Im\Dk{\int_0^t \int_{\R} (|\pl_x|^{-1/2}V(\ta,x)) (|\pl_x|^{1/2} |u(\ta,x)|^2)dxd\ta}=\|\phi\|_{L^2_x}^2
	\end{equation}
	for all $t\in I_0$, where we used
	\begin{equation}
		\int_{\R} (|\pl_x|^{-1/2}V(\ta,x)) (|\pl_x|^{1/2} |u(\ta,x)|^2)dx
		= \int_\R p(\xi)|\wh{\varrho}(\ta,\xi)|^2 d\xi \in \R. 
	\end{equation}
	Now, let us regard $t=T_0$ as the initial time.
	Then, we can repeat the above argument to obtain $\|u(t)\|_{L^2_x}=\|\phi\|_{L^2_x}$ for all $-T_0 \le t \le \min(2T_0,T)$.
	A similar backward-in-time argument proves $\|u(t)\|_{L^2_x}=\|\phi\|_{L^2_x}$ for all $-\min(2T_0,T)\le t \le \min(2T_0,T)$.
	Repeating the same argument, we obtain mass conservation on $I$. 	
\end{proof}

Finally, we give a proof of Theorem \ref{th:LWP GWP} $(iii)$.
By Lemma \ref{lem:local in time conservation}, we have $\|u(t)\|_{L^2_x} = \|\phi\|_{L^2_x}$ for all $t\in[-T,T]$.
Now, let us regard $t=T$ as the initial time.
Then, we can repeat the proofs in Section \ref{subsec:LWP} to construct an extended solution $u\in C_t([-T,2T];L^2_x)$.
A similar backward-in-time argument also yields an extended solution $u\in C_t([-2T,2T];L^2_x)$.
Furthermore, by exactly the same argument as in the proof of Lemma \ref{lem:local in time conservation}, we obtain $\|u(t)\|_{L^2_x} = \|\phi\|_{L^2_x}$ for all $t\in [-2T,2T]$.
Repeating the above argument, we obtain the global solution $u\in C_t(\R;L^2_x)$ such that $\|u(t)\|_{L^2_x} = \|\phi\|_{L^2_x}$ for all $t\in\R$.

\subsection{Improved uniqueness}
As the final step of the proof of Theorem \ref{th:LWP GWP}, we prove $(iv)$.
We can assume $I'=[0,T'_+)$ because we can prove uniqueness on $(-T'_-,0]$ in the same way.
Let $u' \in C_t(I';L^2_x)\cap L^4_{\loc,t}(I';L^\I_x)$ be a solution to \eqref{eq:CMDNLS} with the initial condition $u'(0)=\phi$, where $\|\phi\|_{L^2_x} \le \ep_0$.
Moreover, we assume $\varrho' := |u'|^2 \in L^2_{\loc,t}(I';\dH^{1/2}_x)$.
Fix an arbitrary $0<T'_0 < T_+'$.
Then, there exists a partition of $[0,T_0']$, $0=r_0 < \cdots < r_N = T_0'$ such that $|r_{j+1}-r_j|\le T$ and
\begin{align}
	\|u'\|_{L^4_t(r_j,r_{j+1};L^\I_x)} \le \frac{R}{2},\quad \|\varrho'\|_{L^2_t(r_j,r_{j+1};\dH^{1/2}_x)} \le R
\end{align}
for all $j=0,1,\dots,N-1$, where $T,R>0$ are constants chosen in Section \ref{subsec:LWP}.
Moreover, by Lemma \ref{lem:standard Stri} and the smallness of $\ep_0$, we get
$$\|u'\|_{C_t(r_0,r_1;L^2_x)}\le \frac{R}{2}.$$
Hence, the same argument as in the proof of Lemma \ref{lem:F1 and F2 difference} implies $u(t)=u'(t)$ for all $t\in [0,r_1]$.
Hence, in particular, we have $\|u'(t)\|_{L^2_x} = \|\phi\|_{L^2_x}$ for all $t\in[0,r_1]$.
Now, let us regard $t=r_1$ as the initial time.
Then, by Lemma \ref{lem:standard Stri} and the smallness of $\ep_0$, we get
$$\|u'\|_{C_t(r_1,r_2;L^2_x)}\le \frac{R}{2}.$$
Hence, we can repeat the same argument as above to obtain $u(t)=u'(t)$ for all $t\in [0,r_2]$.
Repeating the same argument, we obtain $u(t)=u'(t)$ for all $t\in [0,T_0']$.
Since $T_0'$ was arbitrary, we obtain $u(t)=u'(t)$ for all $t\in [0,T'_+)$.

\subsection{Scattering}
Finally, we give a proof of Theorem \ref{th:CM scattering}.
Let $R>0$ be such that $A  (1+\|p\|_{L^\I_{-1}}) R\le 1/2$.
Let 
\begin{equation}
	E_R := \Ck{\varrho \in L^2_t(\R;\dH^{1/2}_x) : \|\varrho\|_{L^2_t \dH^{1/2}_x} \le R}.
\end{equation}
Define $F_3$ by
\begin{equation}
	F_3[\varrho](t):= |S_{p(D)\varrho}(t)\phi|^2.
\end{equation}
Let $\varrho \in E_R$. Then,
$$A\|p(D)\varrho\|_{L^2_t \dH^{-1/2}_x} \le A \|p\|_{L^\I_{-1}} R \le \tw.$$
Thus, Lemma \ref{lem:key} implies
\begin{equation}
	\|F_3[\varrho]\|_{L^2_t \dH^{1/2}_x}
	\le C \|\phi\|_{L^2_x}^2 \le C \ep_0^2.
\end{equation}
Therefore, if we choose sufficiently small $\ep_0 = \ep_0(\|p\|_{L^\I_{-1}})$, we find $F_3[\varrho]\in E_R$.
Next, let $\varrho_1,\varrho_2\in E_R$.
Again, by Lemma \ref{lem:key}, we have
	\begin{align}
	\No{F_3[\varrho_1] - F_3[\varrho_2]}_{L^2_t \dH^{1/2}_x}
	&\le C \|p(D)\varrho_1 - p(D)\varrho_2\|_{L^2_t \dH^{-1/2}_x} \ep_0^2 \\
	&\le C \|p\|_{L^\I_{-1}}  \ep_0^2\|\varrho_1-\varrho_2\|_{L^2_t \dH^{1/2}_x}.
	\end{align}
Therefore, if we choose sufficiently small $\ep_0 = \ep_0(\|p\|_{L^\I_{-1}} )$, we obtain
	\begin{align}
	\No{F_3[\varrho_1] - F_3[\varrho_2]}_{L^2_t \dH^{1/2}_x}
	&\le \tw \|\varrho_1-\varrho_2\|_{L^2_t \dH^{1/2}_x}.
\end{align}
	Hence, $F_3:E_R\to E_R$ is a contraction mapping, and the fixed point theorem ensures that there exists a unique $\varrho \in E_R$ such that
	\begin{equation}
		\varrho = |S_{p(D)\varrho}(t)\phi|^2.
	\end{equation}
From the above argument, the solution to \eqref{eq:CMDNLS} is given by
\begin{equation}
	u(t) = S_{p(D)\varrho}(t) \phi.
\end{equation}

Let $u,u'\in C(\R;L^2_x)$ be two solutions with the same initial data $\phi$ such that
\begin{equation}
	\max\K{\||u|^2\|_{L^2_t(\R;\dH^{1/2}_x)}, \||u'|^2\|_{L^2_t(\R;\dH^{1/2}_x)}} \le R.
\end{equation}
Then, we have $\varrho:=|u|^2 \in E_R$ and $\varrho'=|u'|^2\in E_R$.
Moreover, both $\varrho$ and $\varrho'$ are fixed points of $F_3$.
Hence, we have $\varrho=\varrho'$, which implies $u=u'$.

Finally, since $V := p(D) \varrho \in L^2_t(\R;\dH^{-1/2}_x)$, Theorem \ref{th:construction of SV} $(v)$ implies that $u(t)$ scatters as $t\to\pm\I$.
More precisely,
\begin{equation}
	\lim_{t\to\pm\I}\No{S(-t)u(t) - \phi_\pm }_{L^2_x} = 0, \quad \phi_\pm := W_V^\pm \phi.
\end{equation}

\end{document}